\newtheoremstyle{theorem}{6pt}{6pt}{\itshape}{}{\bfseries}{.}{.5em}{}
\newtheoremstyle{definition}{6pt}{6pt}{\upshape}{}{\bfseries}{.}{.5em}{}
\theoremstyle{theorem}
\newtheorem{theorem}{Theorem}[section]
\newaliascnt{corollary}{theorem}
\newaliascnt{lemma}{theorem}
\newtheorem{lemma}[lemma]{Lemma}
\newaliascnt{sublemma}{theorem}
\theoremstyle{definition}
\newtheorem{definition}{Definition}[section]
\newtheorem{remark}{Remark}[section]
\newaliascnt{proposition}{theorem}
\newcommand{\dif}{{\mathrm d}}
\newcommand{\bn}{\begin{eqnarray}}
\newcommand{\en}{\end{eqnarray}}
\newcommand{\bnn}{\begin{eqnarray*}}
\newcommand{\enn}{\end{eqnarray*}}
\renewcommand{\div}{ {\rm div }  }
\newcommand{\N}{\mathbb{N}}
\newcommand{\al}{\alpha}
\newcommand{\frD}{\mathfrak{D}}
\newcommand{\D}{\nabla}
\numberwithin{equation}{section}
\begin{document}

\begin{frontmatter}

\title{Local existence of classical solutions to the 3D isentropic compressible Navier-Stokes-Poisson equations with degenerate viscosities and vacuum}


\author[label1]{Peng Lu}
\address[label1]{School of Mathematical Sciences, Shanghai Jiao Tong University, Shanghai 200240, P.R.China;}
\ead{lp95@sjtu.edu.cn}

\author[label1]{Shaojun Yu\corref{cor2}}
\cortext[cor2]{Corresponding author. }
\ead{edwardsmith123@sjtu.edu.cn}

\begin{abstract}
We consider the isentropic compressible Navier–Stokes–Poisson equations
with degenerate viscousities and vacuum in  a three-dimensional torus. The local
well-posedness of classical solution is established by introducing a ``quasi-symmetric hyperbolic''--``degenerate elliptic'' coupled structure to control the behavior of the velocity of the fluid near the vacuum and  give some uniform estimates. In particular, the initial data allows vacuum in an open set and we do not need any initial compatibility conditions.

 \end{abstract}

\begin{keyword}
Density-dependent viscosities, compressible Navier–Stokes-Poisson equations, vacuum,  local classical solutions.
\end{keyword}

\end{frontmatter}



\section{Introduction}
	
In this paper, we consider the compressible Navier–Stokes–Poisson (NSP) equations for the dynamics of
charged particles of electrons (see \cite{Markowich}). The equations can be written as
\begin{equation}\label{CNSP}
\begin{cases}
\rho_t+\div(\rho u)=0,\\
(\rho u)_t+\div(\rho u\otimes u)+\D P(\rho)=\text{div} \mathbb{S}+\rho\D\Phi,\\
\Delta\Phi=\rho-b,
\end{cases}
\end{equation}
for $t>0$ and $x\in\mathbb{T}^3$, with initial data
\begin{align}\label{data}
t=0:(\rho, u, \Phi)=(\rho_0(x), u_0(x), \Phi_0(x)) \quad \text{for} \quad x\in\mathbb{T}^3,
\end{align}
where $\mathbb{T}^3$ is a three-dimensional torus, $x=(x_1,x_2,x_3)^\top\in \mathbb{T}^3 $ and $t\geq 0$ denote the spatial coordinate and time coordinate, respectively. $\rho\geq 0$ is  the mass density,  and $u = (u^1, u^2, u^3)^\top$ is  the fluid velocity.  The pressure  $P$ of the polytropic fluid satisfies
\begin{equation}\label{Pecv}
P(\rho)=A\rho^\gamma,
\end{equation}
where $A>0$ and $\gamma>1$ are the gas constants. $\Phi$ is  the electrostatic potential satisfying
\begin{equation}
    m_\Phi(t)\triangleq \int_{\mathbb{T}^3}\Phi(t,x) \mathrm{d}x=0, \quad \forall ~ t\geq 0.
\end{equation}

Moreover, $\mathbb{S}$ is the viscosity stress tensor given by
\begin{equation}\label{tensor:T}
\mathbb{S}=2\mu(\rho)\frD(u)+\lambda(\rho)\div u\mathbb{I}_3,
\end{equation}
where 
$$\frD(u) =\frac{\D u+(\D u)^\top}{2},$$
is the deformation tensor, $\mathbb{I}_3$ is the $3 \times 3$ identity matrix, and 
\begin{equation}\label{density-dependent}
\mu(\rho)=\al\rho^\delta, \quad \lambda(\rho)=\beta\rho^\delta,
\end{equation}
where $\mu$ is the shear viscosity coefficient, and $\lambda+\frac{2}{3}\mu$ is the bulk viscosity coefficient, $(\al, \beta,\delta)$  are constants satisfying
\begin{equation}\label{constants}
\al>0, \quad 2\al+3\beta\geq 0, \quad \delta>1,
\end{equation}
the constant $b>0$ is the doping profile, which describes the density of fixed, positively charged background ions.

From the mathematical point of view, the NSP equations are the compressible Navier-Stokes
equations coupled with the Poisson equation. Indeed, if there is absent of the electrostatic effects, then system (1.1)–(1.3) will be reduced to the barotropic compressible Navier–Stokes equations. For the constant viscous fluid, there is a lot of literature on the well-posedness of classical solutions to  isentropic compressible Navier–Stokes equations. When $\inf_{x} \rho_0(x)>0$, the local well-posedness of  classical solutions follows from the standard symmetric hyperbolic-parabolic structure satisfying the well-known Kawashima's condition (cf. \cite{Kawa1983Sys}), which has been extended to a global one by Matsumura-Nishida \cite{Matsu1980The} near the nonvacuum equilibrium. When $\inf_{x} \rho_0(x)=0$, the first main issue is the degeneracy of the time evolution operator, which makes it difficult to describe the behavior of the velocity field near the vacuum. For this case, the local-in-time well-posedness of strong solutions with vacuum was firstly solved by Cho-Choe-Kim \cite{Cho2004Unique} and Cho-Kim \cite{Cho2006On} in $\mathbb{R}^3$, where they introduced an initial compatibility condition to compensate the lack of a positive lower bound of density.
Later, Huang-Li-Xin \cite{Huang2012Glo} extended the local existence to a global one under some initial smallness assumption in $\mathbb{R}^3$. Jiu-Li-Ye \cite{Jiu2014Glo} proved the global existence of classical solution with arbitrarily large data and vacuum in $\mathbb{R}$.

When viscosity coefficients are density-dependent, the Navier-Stokes system have received extensive attentions in recent years, especially for the case with vacuum, where the well-posedness of solutions become more challenging due to the degenerate viscosity. In fact, the high order regularity estimates of the velocity in \cite{Cho2004Unique,Huang2012Glo,Jiu2014Glo} ($\delta=0$) strongly rely on the uniform ellipticity of the Lam\'e operator.
While for $\delta>0$, $\mu(\rho)$ vanishes as the density function connects to vacuum continuously, thus it is difficult to adapt the approach of the constant viscosity case. A remarkable discovery of a new mathematical entropy function was made by Bresch-Desjardins \cite{Bre2003Exi}
for the viscosity satisfying some mathematical relation, which provides additional regularity on some derivative of the density. This observation was applied widely in proving the global existence of weak solutions with vacuum for Navier-Stokes equations and some related models; see Bresch-Desjardins \cite{Bre2003Exi}, Bresch-Vasseur-Yu \cite{Bresch2022Glo}, Jiu-Xin \cite{Jiu2008The}, Mellet-Vasseur \cite{Mellet2007On}, Vasseur-Yu \cite{Vasseur2016Exi}, and so on. Then, we turn our attention to the study of classical solutions. 
When  $\delta=1$, Li-Pan-Zhu \cite{Li2017On} obtained the local existence of 2-D classical solution with far field vacuum, which also applies to the 2-D shallow water equations. 
When $1 < \delta \leq \min \left\{3, \frac{\gamma+1}{2}\right\}$, by making full use of the symmetrical structure of the hyperbolic operator and the weak smoothing effect of the elliptic operator, Li-Pan-Zhu \cite{Li2019On} established the local existence of 3-D classical solutions with arbitrarily large data and vacuum, see also Geng-Li-Zhu \cite{Geng2019Vanishing} for more related results, and Xin-Zhu \cite{Xin2021Glo} for the global existence of classical solution under some initial smallness assumptions in homogeneous Sobolev space. When $0<\delta<1$, Xin-Zhu \cite{Xin2021Well} obtained the local existence of 3-D local classical solution with far field vacuum, Cao-Li-Zhu \cite{Cao2022Glo} proved the global existence of 1-D classical solution with large initial data.   Some other interesting results and discussions can also be found in  Germain-Lefloch \cite{Germain2016Finite}, Guo-Li-Xin \cite{Guo2012Lagrange}, Lions \cite{Lions1998Math}, Yang-Zhao \cite{Yang2002Vacuum}, and the references therein.

Concerning the NSP system, there are also extensive studies about the local and  global well-posedness of solutions.
For the constant viscous fluid (i.e., $\delta=0$ in \eqref{density-dependent}),  Donatelli \cite{Don2003Loc} obtained the local and global existence of weak solutions to 3D isentropic Navier-Stokes-Poisson equations with vacuum in a bounded domain.
Tan-Zhang \cite{Tan2010Strong} obtained the local existence of strong solution to 3D isentropic Navier-Stokes-Poisson equations with vacuum in a bounded domain. 
Li \cite{Li2010Opi} established the global existence and large time behavior to classical solution for Cauchy problem with small data. 
Zheng \cite{Zhe2012NA} established the global existence for Cauchy problem in Besov space. Tan-Wang-Wang \cite{Tan2015sta} established the global existence of classical solutions and obtained  the time decay rates of the solution. Liu-Xu-Zhang \cite{Liu2020Glo} established the global well-posedness of strong solutions with large oscillations and vacuum. 

For the case of density-dependent viscosity coefficients (i.e., $\delta> 0$ in \eqref{density-dependent}), the problem is much more challenging due to the degeneration near the vacuum and hence the obtained results are limited. Ducomet, etc. \cite{Ducomet2010On} studied the global stability of the weak solutions to the Cauchy problem of the NSP equations with non-monotone pressure as $\gamma>\frac{4}{3}$. In \cite{Ducomet2011sph} they also considered
Cauchy problem for the NSP equations of spherically symmetric motions, including both constant viscosities and density-dependent viscosities, and proved the global stability of the weak solutions provided that $\gamma>1$. Zlotinik \cite{Zlotnik2008On} studied the long time behavior of the spherically symmetric weak solutions near a hard core by giving global-in-time bounds for the solutions. Ye-Dou \cite{Ye2017Glo} studied the global existence of weak solutions to the compressible NSP equations with density-dependent viscosities for $\mu(\rho)=\rho, \lambda(\rho)=0$ in \eqref{density-dependent} in a three dimensional torus. 

It should be pointed out that, in spite of the above significant achievements, a lot of questions remain open, including the local well-posedness of classical solutions in multi-dimensions with vacuum. In this paper, we consider the 3D isentropic compressible Navier-Stokes-Poisson equations with degenerate viscosities and vacuum in a torus, where the viscosities depend on the density in a super-linear power law (i.e., $\delta>1$ in \eqref{density-dependent}), and obtain the local existence of classical solutions.

Here and throughout this paper, we adopt the following simplified notations: for any $p, r\in[1,\infty]$ and integer $k, s\geq 0$, we denote
\begin{equation*}
\begin{aligned}
&|f|_p  =\|f\|_{L^p\left(\mathbb{T}^3\right)},\quad \|f\|_s=\|f\|_{H^s\left(\mathbb{T}^3\right)}, \quad D^{k, r} =\left\{f \in L_{l o c}^1\left(\mathbb{T}^3\right):\left|\nabla^k f\right|_r<+\infty\right\}, \\
& D^k =D^{k, 2},\quad |f|_{D^{k, r}}=\|f\|_{D^{k, r}\left(\mathbb{T}^3\right)},\quad |f|_{D^k}=\|f\|_{D^k\left(\mathbb{T}^3\right)},
 \quad \int f=\int_{\mathbb{T}^3} f \mathrm{~d} x,\\
&\|(f, g)\|_X=\|f\|_X+\|g\|_X, \quad \|f\|_{X\cap Y}=\|f\|_X+\|f\|_Y.
\end{aligned}
\end{equation*}
	

\begin{definition}\label{220}
  Let $T>0$ be a finite constant. A solution $(\rho,u, \Phi)$ to the Cauchy problem \eqref{CNSP}--\eqref{data} is called a regular solution in $[0,T]\times \mathbb{T}^3$ if $(\rho,u,\Phi)$ satisfies this problem in the sense of distribution  and:
  \begin{itemize}
\item[(1)]  $\rho \geq 0, \quad \rho^{\frac{\delta-1}{2}} \in C\left([0, T] ; H^3\right), \quad \rho^{\frac{\gamma-1}{2}} \in C\left([0, T] ; H^3\right);$
\item[(2)] $ u \in C([0, T] ; H^{s^{\prime}}) \cap L^{\infty}\left(0, T; H^3\right),  \quad \rho^{\frac{\delta-1}{2}}  \nabla^4 u \in L^2\left(0, T; L^2\right); $
\item[(3)] $ u_t+u \cdot \nabla u=0$ ~ {\rm as} ~ $\rho(t, x)=0;$
\item[(4)] $\Phi\in C\left([0, T] ; H^3\right)\cap L^{\infty}\left(0, T; H^4\right),$ 
\end{itemize}
where $s^{\prime} \in[2,3)$ is a constant.
\end{definition}
 
Our main theorem can be stated as follows.
\begin{theorem}\label{main theory}
Assume that 
$$\delta\in (1,2]\cup \{3\}, ~ \gamma >1.$$
If the initial data $(\rho_0,u_0,\Phi_0)$ satisfies the following regularity conditions:
\begin{equation}\label{203}
\rho_0\geq 0, ~ \left(\rho_0^{\frac{\gamma-1}{2}},~ \rho_0^{\frac{\delta-1}{2}},  ~ u_0\right)\in H^3,
\end{equation}
then there exists a positive time $T_*$ and a unique regular solution $(\rho, u,\Phi)(t, x)$ in $[0, T_*]\times\mathbb{T}^3$ to the problem \eqref{CNSP}--\eqref{data}  satisfying: 
\begin{equation*}
\begin{aligned}
\sup _{0 \leq t \leq T_*} & \left(\|\rho^{\frac{\gamma-1}{2}}\|_3^2+\|\rho^{\frac{\delta-1}{2}}\|_3^2+\|u\|_3^2+\|\Phi\|_4\right)+ \int_0^t |\rho^{\frac{\delta-1}{2}}  \nabla^4 u|_2^2 \mathrm{d} s \leq C^0,
\end{aligned}
\end{equation*}
for arbitrary constant $s^{\prime} \in[2,3)$ and positive constant $C^0=C^0(A, \gamma, \delta, \alpha,\beta,\rho_0, u_0)$. Actually, $(\rho, u,\Phi)$ satisfies the problem \eqref{CNSP}--\eqref{data} classically in positive time $\left(0, T_*\right]$.
\end{theorem}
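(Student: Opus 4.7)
My approach follows the quasi-symmetric-hyperbolic--degenerate-elliptic framework indicated in the abstract. First I reformulate the system in the unknowns
$\varphi := \sqrt{A\gamma/(\gamma-1)}\,\rho^{(\gamma-1)/2}$ and $\psi := \rho^{(\delta-1)/2}$, so that the continuity equation decomposes into two transport equations
\begin{equation*}
\varphi_t + u\cdot\nabla\varphi + \tfrac{\gamma-1}{2}\varphi\,\div u = 0, \qquad
\psi_t + u\cdot\nabla\psi + \tfrac{\delta-1}{2}\psi\,\div u = 0,
\end{equation*}
while dividing the momentum equation by $\rho$ (formally, on the non-vacuum set) yields
\begin{equation*}
u_t + u\cdot\nabla u + a_1\varphi\nabla\varphi \;=\; \psi^{2} L u + a_2\nabla(\psi^{2})\cdot Q(\nabla u) + \nabla\Phi,
\end{equation*}
with constants $a_1,a_2$, Lam\'e operator $Lu=\alpha\Delta u+(\alpha+\beta)\nabla\div u$, and $Q$ a linear combination of $\nabla u$ and $(\nabla u)^\top$. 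The pair $(\varphi,u)$ is coupled in a quasi-symmetric hyperbolic way (the term $a_1\varphi\nabla\varphi$ cancels with $\varphi\,\div u$ from the $\varphi$-equation at top order in energy), $\psi^{2}Lu$ is a degenerate elliptic operator whose coefficient vanishes at vacuum, and $\nabla\Phi$ is a compact nonlocal source since $\Delta\Phi=\rho-b$ with zero mean.

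\textbf{Linearization and uniform estimates.} I would construct a sequence $(\varphi^n,\psi^n,u^n,\Phi^n)$ by freezing coefficients at step $n-1$: $\varphi^n,\psi^n$ solve linear transport equations along the flow of $u^{n-1}$; $u^n$ solves the linear degenerate parabolic equation
\begin{equation*}
u^n_t + u^{n-1}\!\cdot\!\nabla u^n - (\psi^{n-1})^{2} L u^n = F(\varphi^{n-1},\psi^{n-1},u^{n-1},\nabla\Phi^{n-1}),
\end{equation*}
treated via a positive-density regularization $\rho\rightsquigarrow\rho+\eta$ combined with a Galerkin scheme; and $\Phi^n$ is recovered from $\Delta\Phi^n=\rho^n-b$ by standard elliptic theory in the zero-mean class. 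The central task is an $n$- and $\eta$-independent $H^3$-bound on $(\varphi^n,\psi^n,u^n)$ together with the weighted control $\psi^{n-1}\nabla^4 u^n\in L^2_tL^2_x$. For $k=0,1,2,3$ I apply $\nabla^k$ to the equations and test the momentum equation against $\nabla^{2k}u$; the quasi-symmetric cancellation between $\varphi\nabla\varphi$ and $\varphi\,\div u$ kills the pressure contribution at top order, while integrating $\psi^{2}Lu$ by parts produces the weighted dissipation $\int \psi^{2}|\nabla^{k+1}u|^2$. The Poisson source obeys $\|\nabla\Phi\|_{H^{k+1}}\lesssim\|\rho-b\|_{H^k}$ which, combined with the $L^\infty$ bound on $\varphi$ (hence on $\rho$), is absorbed into the energy inequality. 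A Gronwall argument on the resulting differential inequality closes the estimate on some interval $[0,T_*]$ depending only on the initial data, and a contraction in the lower norm $H^2$ promotes the scheme to a Cauchy sequence.

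\textbf{Passage to the limit, uniqueness, expected main obstacle.} Uniform bounds and Aubin--Lions yield strong convergence in $C([0,T_*];H^{s'})$ for any $s'\in[2,3)$ and weak-$*$ convergence in $L^\infty(0,T_*;H^3)$; the weighted bound on $\psi\nabla^4 u$ passes by lower semicontinuity. Where $\rho>0$ the limit is a classical solution; on the vacuum set $\varphi=\psi=0$, and the momentum equation forces $u_t+u\cdot\nabla u=0$, verifying item (3) of Definition \ref{220}. Uniqueness follows from an $L^2$-difference energy estimate exploiting the same quasi-symmetric cancellation and the weighted coercivity. The main obstacle I foresee is the top-order ($k=3$) estimate for $u$: commutators such as $[\nabla^3,\psi^{2}]Lu$ contain $\nabla^3(\psi^{2})\cdot Lu$, and only for $\delta\in(1,2]\cup\{3\}$ can $\psi^{2}=\rho^{\delta-1}$ be expressed as an $H^3$-admissible smooth composition of $\psi$ (or of $\rho=\psi^{2/(\delta-1)}$), so that these commutators are absorbable by the degenerate dissipation via Moser-type inequalities. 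The hypothesis $\delta>1$ is further used to make $(\psi^{n-1})^{2}Lu^n$ sufficiently decaying at vacuum to pass to the limit; the borderline values $\delta=2$ and $\delta=3$ reflect precisely the algebraic compatibility between the viscosity power and the admissible $H^3$ nonlinear calculus. Securing this closed top-order estimate, with the Poisson term on the right-hand side, is the crux; all remaining steps (transport theory, elliptic regularity, compactness, and the vacuum identity) are then essentially standard.
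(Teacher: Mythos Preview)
Your overall plan coincides with the paper's: the same reformulation into $(\psi,\varphi,u)=(\rho^{(\delta-1)/2},\rho^{(\gamma-1)/2},u)$, linearization with an additive $\eta^2$--regularization of the degenerate elliptic operator, the hierarchy of $H^k$ energy estimates ($k\le 3$) using the symmetric-hyperbolic cancellation between the pressure and the $\phi$--transport, passage $\eta\to 0$, and then an iteration closed by a low-norm contraction.

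Where you go wrong is in locating the role of the hypothesis $\delta\in(1,2]\cup\{3\}$. Your claim that ``only for $\delta\in(1,2]\cup\{3\}$ can $\psi^{2}=\rho^{\delta-1}$ be expressed as an $H^3$-admissible smooth composition of $\psi$'' is simply false: squaring is smooth, so $\psi\in H^3\Rightarrow \psi^2\in H^3$ for every $\delta>1$, and the paper disposes of the commutator $[\nabla^3,\psi^2]Lu$ (their term $\mathrm I_5$ at $|\zeta|=3$) by direct product estimates plus absorption of $|\psi\nabla^4u|_2$ into the weighted dissipation, with no restriction on $\delta$ whatsoever. So the ``main obstacle'' you name is not one.

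The genuine reason $\delta\in(1,2]\cup\{3\}$ appears is the \emph{Poisson coupling}, which you pass over with ``$\|\nabla\Phi\|_{H^{k+1}}\lesssim\|\rho-b\|_{H^k}$ \ldots combined with the $L^\infty$ bound on $\varphi$''. The issue is not boundedness of $\rho$ but its Sobolev regularity: the top-order estimate uses $\|\Phi\|_{4}\lesssim \|\rho-b\|_{2}$, and $\rho=\psi^{2/(\delta-1)}$ lies in $H^2$ near vacuum only when the exponent $p=2/(\delta-1)$ satisfies $p\ge 2$ (i.e.\ $\delta\le 2$) or $p=1$ (i.e.\ $\delta=3$); for $p\in(1,2)$ one has $\partial^2(\psi^p)\sim \psi^{p-2}|\nabla\psi|^2$, which blows up where $\psi=0$. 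The same exponent reappears in the contraction step, where the paper controls $\Delta\bar\Phi^{k+1}=(\varphi^{k+1})^{2/(\delta-1)}-(\varphi^{k})^{2/(\delta-1)}$ by $C\,\bar\varphi^{k+1}$ via the mean value theorem, and once more when showing $\rho=\varphi^{2/(\delta-1)}\in C^1$ so that the regular solution is classical (this last point only needs $\delta\le 3$; cf.\ the paper's Remark~1.2). In short, the Navier--Stokes part of your argument would close for any $\delta>1$; it is handling $\rho=\psi^{2/(\delta-1)}$ in the Poisson equation, not the viscous commutators, that forces the stated range of $\delta$. Also, the paper's contraction is at the $L^2$ level, not $H^2$ as you write; at $L^2$ the Poisson difference enters exactly through the mean value identity above.
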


\begin{remark}
The initial data allows vacuum in an open set and we do not need any initial compatibility conditions.   And there are also no smallness conditions imposed on initial data. 
\end{remark}
\begin{remark}
 The restriction $\delta\leq 3$ is only used to demonstrate the regular solution we obtained in Theorem \ref{main theory} is a classical one. For details, see \eqref{301} in \S \ref{section 3.6}.
\end{remark}

The rest of this paper is organized as follows. \S \ref{Preliminaries} is
dedicated for the preliminary lemmas to be used later.  \S \ref{proof of well-posedness} is devoted to proving the well-posedness of local classical solutions, i.e., Theorem \ref{main theory}.
 
\section{Preliminaries}\label{Preliminaries}

The following well-known Gagliardo–Nirenberg inequality will be used (see \cite{Ladyzenskaja}).
\begin{lemma}\label{GN}
For $p\in[2,6]$, $q\in(1,\infty)$ and $r\in(3,\infty)$, there exists a constant $C>0$ which may depend on $q,r$  such that for $f\in H^1$ and $g\in L^q \cap D^{1,r}$, it holds
\begin{equation}
|f|_p\leq C|f|_2^{\frac{6-p}{2p}}|\D f|_2^{\frac{3p-6}{2p}},
\end{equation}
\begin{equation}
|g|_\infty\leq C|g|_q^{\frac{q(r-3)}{3r+q(r-3)}}|\D g|_r^{\frac{3r}{3r+q(r-3)}}.
\end{equation}
\end{lemma}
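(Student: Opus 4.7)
The plan is to prove each inequality as a standard instance of Gagliardo--Nirenberg interpolation on the torus, by combining H\"older interpolation with the appropriate Sobolev/Morrey embedding. Both estimates are classical and recorded in, e.g., Ladyzhenskaya's monograph \cite{Ladyzenskaja}, so I would present them essentially as a reminder; the proof structure below simply ties down which ingredients enter.

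For the first inequality I would use H\"older interpolation between $L^2$ and $L^6$, then invoke the Sobolev embedding $H^1(\mathbb{T}^3)\hookrightarrow L^6(\mathbb{T}^3)$. Concretely, for $p\in[2,6]$ solve $\tfrac{1}{p}=\tfrac{\theta}{2}+\tfrac{1-\theta}{6}$ to get $\theta=\tfrac{6-p}{2p}$ and $1-\theta=\tfrac{3p-6}{2p}$; H\"older's inequality then yields $|f|_p\le |f|_2^{\theta}|f|_6^{1-\theta}$, and the scale-invariant form of the Sobolev embedding replaces $|f|_6$ by $C|\nabla f|_2$, giving the stated estimate. The one subtlety on the torus is that $|f|_6\le C|\nabla f|_2$ holds only after subtracting the mean, but applying it to $f-\bar f$ and then controlling $|\bar f|_6$ by $|f|_2$, followed by the same interpolation, absorbs this harmlessly (and the two endpoints $p=2,6$ are of course trivial/standard).

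For the second inequality the strategy is Morrey's inequality combined with a radius optimization. Since $r>3$, Morrey's embedding gives the pointwise H\"older bound $|g(x)-g(y)|\le C|x-y|^{1-3/r}|\nabla g|_r$. Fix $x\in\mathbb{T}^3$ and a small radius $R$, and split $g(x)=\bigl(g(x)-\bar g_{B_R(x)}\bigr)+\bar g_{B_R(x)}$, where $\bar g_{B_R(x)}$ is the average of $g$ over the Euclidean ball of radius $R$ around $x$. Morrey controls the first summand by $CR^{1-3/r}|\nabla g|_r$, while H\"older's inequality controls the second by $CR^{-3/q}|g|_q$. Taking the supremum in $x$ and optimizing $R$ by equating the two contributions, i.e., choosing
\[
R\;\sim\;\bigl(|g|_q/|\nabla g|_r\bigr)^{qr/(q(r-3)+3r)},
\]
reproduces exactly the exponents $\tfrac{q(r-3)}{3r+q(r-3)}$ on $|g|_q$ and $\tfrac{3r}{3r+q(r-3)}$ on $|\nabla g|_r$, as a short algebraic check confirms.

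The main obstacle is purely bookkeeping: the exponents in (2.2) look involved but are forced by the $R$-optimization, so no ingenuity is required beyond the standard argument. The only genuinely subtle point is the treatment of the mean for (2.1) on the compact torus, handled as indicated above; everything else follows from embedding theorems that one may quote from \cite{Ladyzenskaja}.
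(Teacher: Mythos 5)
The paper does not give a proof of this lemma; it merely cites Ladyzhenskaya's monograph, so there is no internal argument to compare against. Your outline is the standard derivation of both estimates, and the exponent bookkeeping is correct in both cases (in particular, the radius optimization for the second inequality does produce exactly the stated powers $\tfrac{q(r-3)}{3r+q(r-3)}$ and $\tfrac{3r}{3r+q(r-3)}$, as you claim).

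One point, however, deserves more care than you give it. On the compact torus $\mathbb{T}^3$ both inequalities are false for nonzero constant functions: the right-hand sides vanish because $\nabla f=0$ or $\nabla g=0$ while the left-hand sides do not. Consequently the mean-subtraction device you describe for the first estimate cannot actually recover the stated form; writing $f=(f-\bar f)+\bar f$ and applying the scale-invariant bound to $f-\bar f$ yields only the weaker
\begin{equation*}
|f|_p\le C|f|_2^{\frac{6-p}{2p}}|\nabla f|_2^{\frac{3p-6}{2p}}+C|f|_2,
\end{equation*}
and the extra $|f|_2$ term cannot be absorbed (for constants it is the whole right-hand side). The same objection applies to the Morrey argument for the second estimate, where in addition the optimized radius $R\sim(|g|_q/|\nabla g|_r)^{qr/(q(r-3)+3r)}$ may exceed the diameter of the torus when $|\nabla g|_r$ is small, so the optimization step is not legitimate without a cap on $R$, which reintroduces a lower-order term. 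The honest reading is that the lemma is being quoted in its $\mathbb{R}^3$ form, where your argument is exactly right, and that in the paper's actual applications the functions and the ambient full Sobolev norms make the harmless additive corrections irrelevant. You should either state the lemma with the robust right-hand side $\|f\|_1$ (resp. $\|g\|_{L^q\cap D^{1,r}}$) or note explicitly that it is applied only in contexts where the constant mode is already controlled.
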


The next several lemmas contain some Sobolev inequalities on the product estimates, the interpolation estimates, the composite function estimates, etc., which
can be found in many works, say Majda \cite{Majda}.

\begin{lemma}[\cite{Majda}]\label{Lem:2.2}
For constants $s\in\N_+$, $r, p, q\in[1,\infty]$ satisfying
$$\frac{1}{r}=\frac{1}{p}+\frac{1}{q},$$
and functions $f,g\in W^{s,p}\cap W^{s,q}$, there exists a constant $C$ only depending on $s$, such that
\begin{equation}
\left|\D^s(fg)-f\D^sg\right|_r\leq C\left(|\D f|_p|\D^{s-1}g|_q+|\D^sf|_p|g|_q\right),
\end{equation}
\begin{equation}
\left|\D^s(fg)-f\D^sg\right|_r\leq C\left(|\D f|_p|\D^{s-1}g|_q+|\D^sf|_q|g|_p\right),
\end{equation}
where $\D^sf$ $(s>1)$ stands for the set of all partial derivatives $\partial_x^\xi f$ with $|\xi|=s$.
\end{lemma}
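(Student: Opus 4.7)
The plan is to follow the classical Moser commutator strategy: expand with the Leibniz rule, split with Hölder, and close via Gagliardo--Nirenberg (Landau--Kolmogorov) interpolation. Using multi-index notation,
$$\D^s(fg) - f \D^s g \;=\; \sum_{k=1}^{s} c_{s,k}\, \D^k f \cdot \D^{s-k} g,$$
where the $c_{s,k}$ are combinatorial constants depending only on $s$. It thus suffices to bound each cross term $|\D^k f \cdot \D^{s-k} g|_r$ by the desired right-hand side for every $1 \le k \le s$.

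For the first estimate, apply Hölder's inequality with the original exponents $(p,q)$: since $\tfrac{1}{p} + \tfrac{1}{q} = \tfrac{1}{r}$,
$$|\D^k f \cdot \D^{s-k} g|_r \;\le\; |\D^k f|_p\, |\D^{s-k} g|_q.$$
Then I would invoke the Landau--Kolmogorov interpolation (a special case of Gagliardo--Nirenberg valid on $\T^3$ since it is pure derivative-to-derivative interpolation within the same $L^p$):
$$|\D^k f|_p \le C\, |\D f|_p^{\frac{s-k}{s-1}}\, |\D^s f|_p^{\frac{k-1}{s-1}}, \qquad |\D^{s-k} g|_q \le C\, |g|_q^{\frac{k-1}{s-1}}\, |\D^{s-1} g|_q^{\frac{s-k}{s-1}}.$$
Multiplying and applying Young's inequality $a^\theta b^{1-\theta} \le \theta a + (1-\theta) b$ with $\theta = (s-k)/(s-1)$ yields exactly $C(|\D f|_p |\D^{s-1}g|_q + |\D^s f|_p |g|_q)$; summation over $k$ absorbs the combinatorial prefactors into $C = C(s)$.

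The second estimate follows the same template, but with an asymmetric Hölder split: set $\tfrac{1}{a_k} = \tfrac{s-k}{(s-1)p} + \tfrac{k-1}{(s-1)q}$ and $\tfrac{1}{b_k} = \tfrac{1}{r} - \tfrac{1}{a_k}$, and use the mixed-exponent Gagliardo--Nirenberg interpolation
$$|\D^k f|_{a_k} \le C |\D f|_p^{\frac{s-k}{s-1}} |\D^s f|_q^{\frac{k-1}{s-1}}, \qquad |\D^{s-k} g|_{b_k} \le C |g|_p^{\frac{k-1}{s-1}} |\D^{s-1} g|_q^{\frac{s-k}{s-1}},$$
whose scaling is consistent because $k = \tfrac{s-k}{s-1}\cdot 1 + \tfrac{k-1}{s-1}\cdot s$. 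Applying Young's inequality as before produces the required bound $C(|\D f|_p |\D^{s-1} g|_q + |\D^s f|_q |g|_p)$.

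The main technical point is verifying the Gagliardo--Nirenberg interpolations on $\T^3$ throughout the admissible ranges, including the endpoint cases $k = 1$ or $k = s$ (where one interpolation exponent vanishes, reducing the estimate to a direct Hölder bound) and $p$ or $q$ equal to $1$ or $\infty$ (handled by the standard endpoint versions of the interpolation). Because the inequalities used are purely local-derivative statements, they transfer from $\R^3$ to $\T^3$ without modification, and the final constant depends only on $s$.
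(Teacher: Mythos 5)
The paper states this as a cited result (Majda \cite{Majda}) and provides no proof of its own, so there is nothing to compare against within the paper. Your argument --- Leibniz expansion of the commutator, H\"older with the given (or interpolated) exponents on each cross term $\D^k f\cdot\D^{s-k}g$, Gagliardo--Nirenberg interpolation of the intermediate derivatives, and Young's inequality to close --- is the standard proof from Majda's book, and it is correct; the exponent bookkeeping you set up for the asymmetric second estimate (with $\theta=(k-1)/(s-1)$ at the lower endpoint $j/m$ of the admissible Gagliardo--Nirenberg range, so no Sobolev gain is invoked) is consistent, and the endpoint cases $k=1$, $k=s$ reduce to plain H\"older as you note.
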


\begin{lemma}[\cite{Majda}]\label{Lem:2.3}
If functions $f, g \in H^s$ and $s > \frac{3}{2}$, then $fg\in H^s$, and there
exists a constant $C$ only depending on $s$ such that
\begin{equation}
\|fg\|_s\leq C\|f\|_s\|g\|_s.
\end{equation}
\end{lemma}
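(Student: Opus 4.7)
The proof of Lemma \ref{Lem:2.3} is a standard Moser-type algebra estimate, and the plan is as follows.

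First I would expand $\|fg\|_s^2 = \sum_{|\alpha|\le s}|\partial^{\alpha}(fg)|_2^{\,2}$ and apply the Leibniz rule coordinate-wise to write
\begin{equation*}
\partial^{\alpha}(fg)=\sum_{\beta\le \alpha}\binom{\alpha}{\beta}\partial^{\beta}f\,\partial^{\alpha-\beta}g,
\end{equation*}
so the whole lemma reduces to the pointwise-in-$\alpha$ bound $|\partial^{\beta}f\,\partial^{\alpha-\beta}g|_2\le C\|f\|_s\|g\|_s$ for every $0\le|\beta|\le|\alpha|\le s$.

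Next I would split into three cases according to the size of $|\beta|$. When $|\beta|=0$, use H\"older to get $|f\,\partial^{\alpha}g|_2\le |f|_\infty|\partial^{\alpha}g|_2$ and invoke the Sobolev embedding $H^s\hookrightarrow L^\infty$ (which holds because $s>3/2$) to conclude $|f|_\infty\le C\|f\|_s$, while $|\partial^{\alpha}g|_2\le \|g\|_s$. The case $|\beta|=|\alpha|$ is handled symmetrically. For intermediate $0<|\beta|<|\alpha|$, pick H\"older exponents $(p,q)$ with $\tfrac1p+\tfrac1q=\tfrac12$ tuned to the Gagliardo–Nirenberg interpolation of Lemma \ref{GN}: for the $f$-factor I would choose $p$ so that
\begin{equation*}
|\partial^{\beta}f|_p\le C|f|_2^{1-|\beta|/s}\bigl|\nabla^s f\bigr|_2^{|\beta|/s}\le C\|f\|_s,
\end{equation*}
and correspondingly
\begin{equation*}
|\partial^{\alpha-\beta}g|_q\le C|g|_2^{1-|\alpha-\beta|/s}\bigl|\nabla^s g\bigr|_2^{|\alpha-\beta|/s}\le C\|g\|_s.
\end{equation*}
Multiplying these two bounds gives the desired $C\|f\|_s\|g\|_s$ control on each Leibniz term; summing over the finitely many $\beta\le\alpha$ and $|\alpha|\le s$ produces the claimed inequality.

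The main obstacle is checking that the Gagliardo–Nirenberg exponents $p,q$ chosen for the intermediate case always lie in the admissible range $[2,\infty]$ (and, when an exponent must be $\infty$, appealing to $H^{s-|\beta|}\hookrightarrow L^\infty$ which again requires $s-|\beta|>3/2$; if not, one falls back on the interpolation form). The hypothesis $s>3/2$ is essential precisely here: it both supplies the embedding used for the endpoint cases and leaves enough room in $s-|\beta|$ for the intermediate interpolation to close. Everything else is bookkeeping of multi-index exponents and constants depending only on $s$.
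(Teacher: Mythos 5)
Your physical-space Leibniz-rule strategy is a legitimate and standard way to prove this Moser-type product estimate, but it is a genuinely different route from the one in Majda's book (the paper's cited source), which argues on the Fourier side: one writes $\widehat{fg}=\hat f*\hat g$, uses the elementary inequality $(1+|\xi|^2)^{s/2}\le C\bigl((1+|\xi-\eta|^2)^{s/2}+(1+|\eta|^2)^{s/2}\bigr)$, and closes with Young's convolution inequality, with $s>3/2$ entering precisely to guarantee that $(1+|\eta|^2)^{-s/2}\in L^1(\mathbb{R}^3)$. That argument handles all $|\alpha|\le s$ in one stroke and extends painlessly to fractional $s$, whereas your Leibniz decomposition is more elementary but requires the case-by-case bookkeeping you acknowledge.

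There is, however, one concrete arithmetic problem in your intermediate case. If you insist on the Gagliardo--Nirenberg exponent $\theta=|\beta|/s$ for the $f$-factor, the scaling relation
\begin{equation*}
\frac{1}{p}=\frac{|\beta|}{3}+\theta\Bigl(\frac{1}{2}-\frac{s}{3}\Bigr)+\frac{1-\theta}{2}
\end{equation*}
forces $p=2$, and the analogous choice for $g$ forces $q=2$, so $\tfrac1p+\tfrac1q=1$, not $\tfrac12$: the Hölder pairing you want cannot be achieved with those particular interpolation weights. What actually closes the argument is to use the natural Sobolev embeddings $H^{s-|\beta|}\hookrightarrow L^p$ and $H^{s-|\alpha-\beta|}\hookrightarrow L^q$ with $\tfrac1p\ge\tfrac12-\tfrac{s-|\beta|}{3}$ (and likewise for $q$), and to observe that $\tfrac1p+\tfrac1q\le\tfrac12$ is available because $|\beta|+|\alpha-\beta|=|\alpha|\le s$ together with $s>\tfrac32$ gives $1-\tfrac{2s-|\alpha|}{3}\le\tfrac12$. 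For instance with $s=3$, $|\alpha|=3$, $|\beta|=1$ one takes $(p,q)=(6,3)$ or $(\infty,2)$, not $(2,2)$. The overall plan is sound, but the specific $\theta$'s you name should be replaced by this embedding count, since that step is the real content of the lemma rather than bookkeeping.
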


\begin{lemma}[\cite{Majda}]\label{Lem:2.4}
If $u \in H^s$, then for any $r\in[0,s]$, there exists a constant $C$ only
depending on $s$ such that
\begin{equation}
\|u\|_r\leq C|u|_2^{1-\frac{r}{s}}\|u\|_s^{\frac{r}{s}}.
\end{equation}
\end{lemma}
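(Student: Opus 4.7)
The plan is to establish this interpolation inequality via the Fourier series characterization of Sobolev spaces on $\mathbb{T}^3$. Every $u \in H^s(\mathbb{T}^3)$ admits a Fourier expansion $u(x) = \sum_{k \in \mathbb{Z}^3} \hat u(k) e^{i k \cdot x}$, and for any $r \in [0,s]$ the $H^r$ norm is equivalent to the weighted sequence norm $\sum_{k \in \mathbb{Z}^3} (1+|k|^2)^r |\hat u(k)|^2$. In this formulation the two quantities on the right-hand side of the claimed inequality, $|u|_2^2$ and $\|u\|_s^2$, appear respectively as the unweighted and $(1+|k|^2)^s$-weighted $\ell^2$ norms of the single coefficient sequence $\{\hat u(k)\}$, so the statement reduces to a purely discrete interpolation between two weighted $\ell^2$ norms.

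The heart of the argument is then a one-line application of Hölder's inequality on $\ell^1(\mathbb{Z}^3)$. Setting $\theta = r/s \in [0,1]$, I would split the summand as
\begin{equation*}
(1+|k|^2)^r |\hat u(k)|^2 \;=\; \bigl[(1+|k|^2)^s |\hat u(k)|^2\bigr]^{\theta} \cdot \bigl[|\hat u(k)|^2\bigr]^{1-\theta},
\end{equation*}
and apply Hölder with conjugate exponents $1/\theta$ and $1/(1-\theta)$ to obtain
\begin{equation*}
\sum_{k} (1+|k|^2)^r |\hat u(k)|^2 \;\leq\; \Bigl(\sum_{k} (1+|k|^2)^s |\hat u(k)|^2\Bigr)^{\theta} \Bigl(\sum_{k} |\hat u(k)|^2\Bigr)^{1-\theta}.
\end{equation*}
Taking square roots and translating back through the norm equivalence yields $\|u\|_r \leq C |u|_2^{1-r/s} \|u\|_s^{r/s}$, with the constant $C$ arising solely from the (dimension- and $s$-dependent) equivalence constants between the paper's $H^r$ norm and its Fourier-side version. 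The degenerate cases $\theta \in \{0,1\}$ reduce to the trivial bounds $\|u\|_0 = |u|_2$ and $\|u\|_s \leq \|u\|_s$.

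The only real subtlety, rather than a genuine obstacle, is identifying the distributional $H^r$ norm used in the paper with the Fourier-weighted norm $\sum_k (1+|k|^2)^r|\hat u(k)|^2$ when $r$ is non-integer; this equivalence is classical on $\mathbb{T}^3$, and indeed the Fourier formula is the standard definition used to extend $H^r$ consistently from integer to arbitrary real $r \geq 0$. A Fourier-free alternative would proceed by first proving the inequality for integer $r \in \{0,1,\dots,s\}$ via iterated integration by parts, starting from the elementary bound $|\nabla^k u|_2^2 \leq |\nabla^{k-1} u|_2 \, |\nabla^{k+1} u|_2$ combined with a log-convexity argument, and then filling in non-integer $r$ by real interpolation between $L^2$ and $H^s$. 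I would nonetheless favor the Fourier route because it produces the result in essentially a single line and delivers the sharp constant.
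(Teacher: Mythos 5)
Your argument is correct and is essentially the standard proof: the Fourier-side characterization of $H^r(\mathbb{T}^3)$ followed by Hölder's inequality with exponents $1/\theta$ and $1/(1-\theta)$ on the split $(1+|k|^2)^r = [(1+|k|^2)^s]^{\theta}\cdot 1^{1-\theta}$. The paper itself does not prove this lemma (it cites Majda), and the cited reference establishes it in exactly this way, so your route coincides with the intended one.
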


\begin{lemma}[\cite{Majda}]\label{Lem:2.5} 
$(1)$ If $f, g \in H^s \cap L^\infty$ and $|\zeta | \leq s$, then there exists a constant $C$ only depending on $s$ such that
\begin{equation}
\left\|\partial_x^\zeta(fg)\right\|_s\leq C\left(|f|_\infty|\D^sg|_2+|g|_\infty|\D^sf|_2\right).
\end{equation}
$(2)$ Let $u(x)$ be a continuous function taking its values in some open set $G$ such
that $u \in H^s \cap L^\infty$, and $g(u)$ be a smooth vector-valued function on $G$. Then
for any $s\geq 1$, there exists a constant $C$ only depending on $s$ such that
\begin{equation}
|\D^sg(u)|_2\leq C\left\|\frac{\partial g}{\partial u}\right\|_{s-1}|u|_\infty^{s-1}|\D^su|_2.
\end{equation}
\end{lemma}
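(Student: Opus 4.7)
The plan is to establish both parts by the classical Moser-calculus scheme: reduce each derivative of the product (resp. composition) to products of lower-order derivatives via the Leibniz rule (resp. Fa\`a di Bruno's chain rule), estimate each factor in a carefully chosen $L^p$ by the Gagliardo--Nirenberg interpolation of \autoref{GN}, and recombine using H\"older's and Young's inequalities so that only the endpoint quantities $|f|_\infty$, $|g|_\infty$, $|\D^s f|_2$, $|\D^s g|_2$ in part (1), and $|u|_\infty$, $|\D^s u|_2$ together with Sobolev norms of $\pa g/\pa u$ in part (2), remain on the right.

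For part (1), I would expand
$$\pa_x^\zeta(fg)=\sum_{\beta\le\zeta}\binom{\zeta}{\beta}\pa^\beta f\,\pa^{\zeta-\beta}g,$$
and estimate each mixed term with $k:=|\beta|\in\{1,\dots,|\zeta|-1\}$ in $L^2$ by H\"older with exponents $\bigl(2|\zeta|/k,\;2|\zeta|/(|\zeta|-k)\bigr)$. The Gagliardo--Nirenberg inequality on $\T^3$ provides
$$|\D^k f|_{2|\zeta|/k}\le C|f|_\infty^{1-k/|\zeta|}|\D^{|\zeta|} f|_2^{k/|\zeta|},\qquad |\D^{|\zeta|-k}g|_{2|\zeta|/(|\zeta|-k)}\le C|g|_\infty^{k/|\zeta|}|\D^{|\zeta|}g|_2^{1-k/|\zeta|}.$$
Multiplying and applying Young's inequality with conjugate exponents $|\zeta|/(|\zeta|-k)$ and $|\zeta|/k$ gives, for each interior $k$, a bound by $C\bigl(|f|_\infty|\D^{|\zeta|}g|_2+|g|_\infty|\D^{|\zeta|}f|_2\bigr)\le C\bigl(|f|_\infty|\D^s g|_2+|g|_\infty|\D^s f|_2\bigr)$, and the endpoints $k=0,|\zeta|$ are immediate by placing the undifferentiated factor in $L^\infty$. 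Summing over $\beta$ and recalling $|\zeta|\le s$ closes the estimate.

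For part (2), I would apply Fa\`a di Bruno to write, for $s\ge 1$,
$$\D^s g(u)=\sum_{j=1}^{s}\sum_{\substack{k_1+\cdots+k_j=s\\ k_i\ge 1}}C_{j;k_1,\dots,k_j}\,(\pa^j g)(u)\prod_{i=1}^{j}\D^{k_i}u.$$
Since $u$ takes values in a bounded set (as $u\in L^\infty$) on which $g$ is smooth, $(\pa^j g)(u)$ is bounded in $L^\infty$ by a constant depending on $|u|_\infty$ and on pointwise bounds of $\pa^j g$, which in turn are controlled by $\|\pa g/\pa u\|_{s-1}$ after Sobolev embedding $H^{s-1}\hookrightarrow L^\infty$ (or directly for low $s$ using continuity of $u$). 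The product $\prod_i \D^{k_i} u$ is estimated in $L^2$ by H\"older with exponents $2s/k_i$ (note $\sum k_i/(2s)=1/2$) together with Gagliardo--Nirenberg $|\D^{k_i}u|_{2s/k_i}\le C|u|_\infty^{1-k_i/s}|\D^s u|_2^{k_i/s}$; since $\sum k_i=s$, the resulting exponents of $|\D^s u|_2$ sum to $1$ and those of $|u|_\infty$ sum to $s-j$, producing $C|u|_\infty^{s-j}|\D^s u|_2$. Balancing $j$ against the $(j-1)$ residual $|u|_\infty$-factors absorbed into $\|\pa g/\pa u\|_{s-1}$ yields the claimed bound $C\|\pa g/\pa u\|_{s-1}|u|_\infty^{s-1}|\D^s u|_2$.

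The main obstacle is the combinatorial bookkeeping in part (2): one must carefully balance powers of $|u|_\infty$ across every Fa\`a di Bruno partition $(k_1,\dots,k_j)$ of $s$, verify that the top-order term $j=s$ (which involves $(\pa^s g)(u)\cdot(\D u)^s$ and is the hardest to bound) is still controlled by the single norm $\|\pa g/\pa u\|_{s-1}$, and treat the cases $s=1,2$ where $H^{s-1}\hookrightarrow L^\infty$ need not hold (here one works directly with the smoothness of $g$ on the bounded range of $u$). Part (1) is comparatively routine once the endpoint/interior dichotomy in the H\"older--GN--Young chain is set up.
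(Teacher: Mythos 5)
The paper does not prove this lemma; it cites it directly from Majda's book, so there is no in-paper argument to compare with. Your proposal is therefore judged purely on its own merits.

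Part (1) is essentially correct and is the standard Moser product argument. One small point you should flag: the left-hand side in the paper's statement is written $\|\partial_x^\zeta(fg)\|_s$, but that cannot be right (for $|\zeta|=s$ it would demand control of $2s$ derivatives of $fg$); it must be $|\partial_x^\zeta(fg)|_2$, which is what Majda's Proposition 2.1(A) asserts and what your Leibniz--H\"older--Gagliardo--Nirenberg--Young chain actually proves. Note also that the interpolation $|\D^k f|_{2|\zeta|/k}\lesssim |f|_\infty^{1-k/|\zeta|}|\D^{|\zeta|}f|_2^{k/|\zeta|}$ is a general Gagliardo--Nirenberg inequality and is not literally among the two special cases recorded in \autoref{GN}; you should either cite a more general interpolation inequality or derive the needed case, rather than attribute it to \autoref{GN}.

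Part (2) has the right skeleton (Fa\`a di Bruno plus the same H\"older/GN mechanism as in part (1)), but two steps as written do not go through. First, the remark that pointwise bounds of $\partial^j g$ are ``controlled by $\|\partial g/\partial u\|_{s-1}$ after Sobolev embedding $H^{s-1}\hookrightarrow L^\infty$'' is off target: in Majda's lemma the norm $\|\partial g/\partial u\|_{s-1}$ is the $C^{s-1}$ sup-norm of $\partial g/\partial u$ over a compact set containing the range of $u$, not a Sobolev norm; under that reading no embedding is required, and if one did insist on the Sobolev-norm reading (as the paper's notation nominally suggests) the embedding $H^{s-1}\hookrightarrow L^\infty$ fails in three dimensions for $s\le 2$, precisely the low-$s$ cases you mention. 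Second, the claim that the missing $|u|_\infty^{j-1}$ can be ``absorbed into $\|\partial g/\partial u\|_{s-1}$'' is not a valid step: $\|\partial g/\partial u\|_{s-1}$ is a fixed constant independent of $|u|_\infty$ and cannot swallow powers of $|u|_\infty$. Your H\"older/GN count correctly yields $|(\partial^j g)(u)\prod_i\D^{k_i}u|_2\le C\|\partial g/\partial u\|_{C^{s-1}}|u|_\infty^{s-j}|\D^s u|_2$; summing over $j=1,\dots,s$ gives a factor $\sum_{j=1}^s|u|_\infty^{s-j}\le C(1+|u|_\infty)^{s-1}$, which matches the form of the inequality in Majda's book (which has $(1+|u|_\infty)^{s-1}$, not the bare $|u|_\infty^{s-1}$ the paper quotes). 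You should either reproduce Majda's $(1+|u|_\infty)^{s-1}$, or note explicitly that the paper's stated bound requires $|u|_\infty\ge 1$ (or an absorption into the constant); the ``balancing'' argument as stated does neither.
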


\begin{lemma}
 [\cite{Majda}]\label{104} If function sequence $\left\{w_n\right\}_{n=1}^{\infty}$ converges weakly in a Hilbert space $X$ to $w$, then $w_n$ converges strongly to $w$ in $X$ if and only if
\begin{equation*}
\|w\|_X \geq \limsup_{n \rightarrow \infty}\left\|w_n\right\|_X.
\end{equation*}
\end{lemma}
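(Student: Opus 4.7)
The plan is to reduce both implications to the Hilbert-space identity
\[
\|w_n - w\|_X^2 = \|w_n\|_X^2 - 2\langle w_n,w\rangle_X + \|w\|_X^2,
\]
combined with the weak lower semicontinuity of the norm, $\|w\|_X \leq \liminf_{n\to\infty}\|w_n\|_X$ whenever $w_n \rightharpoonup w$. The latter itself follows in one line from Cauchy--Schwarz, $|\langle w_n,w\rangle_X| \leq \|w_n\|_X\|w\|_X$, together with the definition of weak convergence applied to the test vector $w$, which gives $\|w\|_X^2 = \lim_{n\to\infty}\langle w_n,w\rangle_X \leq \|w\|_X \liminf_{n\to\infty}\|w_n\|_X$.

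For the forward direction I would simply invoke continuity of the norm on $X$: strong convergence $w_n \to w$ implies $\|w_n\|_X \to \|w\|_X$, so $\limsup_{n\to\infty}\|w_n\|_X = \|w\|_X$, which gives the stated inequality (in fact with equality).

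For the reverse direction I would combine the hypothesis with weak lower semicontinuity to sandwich
\[
\|w\|_X \leq \liminf_{n\to\infty}\|w_n\|_X \leq \limsup_{n\to\infty}\|w_n\|_X \leq \|w\|_X,
\]
forcing $\|w_n\|_X \to \|w\|_X$ and hence $\|w_n\|_X^2 \to \|w\|_X^2$. Since $w_n \rightharpoonup w$, taking $w$ itself as the test element gives $\langle w_n,w\rangle_X \to \langle w,w\rangle_X = \|w\|_X^2$. Inserting both limits into the identity above yields $\|w_n - w\|_X^2 \to \|w\|_X^2 - 2\|w\|_X^2 + \|w\|_X^2 = 0$, i.e., strong convergence.

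No substantial obstacle is anticipated; this is the classical Radon--Riesz property specialized to Hilbert space, and the only point requiring care is the order of operations: weak lower semicontinuity must be established before it can be combined with the $\limsup$ hypothesis to upgrade $\|w_n\|_X$ to a convergent sequence, after which the polarization identity closes the argument.
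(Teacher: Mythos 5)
Your argument is correct and is the standard Radon--Riesz argument in a Hilbert space: the forward direction is norm continuity, and the reverse direction sandwiches $\|w_n\|_X \to \|w\|_X$ between weak lower semicontinuity and the hypothesis, then expands $\|w_n-w\|_X^2$ via the inner product. The paper itself states this lemma without proof (citing Majda), so there is nothing to compare against; one small terminological note is that the identity you invoke at the end is just the expansion of $\|w_n-w\|_X^2$ rather than the polarization identity proper, but the substance is right.
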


The next one will show some compactness results from the Aubin-Lions Lemma.
\begin{lemma}[\cite{Simon}]\label{22}
Let $X_0$, $X$ and $X_1$ be three Banach spaces with $X_0 \subset X \subset X_1$. Suppose that $X_0$ is compactly embedded in $X$ and that $X$ is continuously embedded in $X_1$. Then:
\begin{itemize}
    \item Let $G$ be bounded in $L^p(0,T; X_0)$ for $1 \leq p < \infty$, and $\frac{\partial G}{\partial t}$ be bounded in $L^1(0, T; X_1)$, then $G$ is relatively compact in $L^p(0,T; X)$.
    \item Let $F$ be bounded in $L^\infty(0,T; X_0)$ and $\frac{\partial F}{\partial t}$ be bounded in $L^p(0,T; X_1)$ with $p >1$,then $F$ is relatively compact in $C(0,T; X)$.
\end{itemize}
\end{lemma}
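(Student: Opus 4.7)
The plan is to deduce both claims from two ingredients: the Ehrling--Lions interpolation inequality associated with the three-space chain, and a quantitative time-translation estimate obtained from the hypothesis on $\partial_t$. First I would establish that for every $\eta>0$ there exists $C_\eta>0$ such that
\begin{equation*}
\|u\|_X\leq \eta\,\|u\|_{X_0}+C_\eta\,\|u\|_{X_1}\qquad\forall\,u\in X_0.
\end{equation*}
This follows by contradiction from $X_0\hookrightarrow\hookrightarrow X$: a failing sequence $\{u_n\}$ with $\|u_n\|_X=1$, $\|u_n\|_{X_0}\le 1/\eta$ and $\|u_n\|_{X_1}\to 0$ admits an $X$-convergent subsequence whose limit vanishes in $X_1$ (by the continuity $X\hookrightarrow X_1$), hence in $X$, contradicting $\|u_n\|_X=1$.

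For the first statement I would verify the Kolmogorov--Riesz--Fr\'echet criterion in $L^p(0,T;X)$. Boundedness is immediate from $X_0\hookrightarrow X$, so the key point is uniform time-translation equicontinuity:
\begin{equation*}
\sup_{g\in G}\int_0^{T-h}\|g(t+h)-g(t)\|_X^p\,\mathrm dt\longrightarrow 0 \text{ as } h\to 0^+.
\end{equation*}
Writing $g(t+h)-g(t)=\int_t^{t+h}\partial_s g(s)\,\mathrm ds$ in $X_1$ and applying the Ehrling inequality yields
\begin{equation*}
\|g(t+h)-g(t)\|_X\leq \eta\,\|g(t+h)-g(t)\|_{X_0}+C_\eta\int_t^{t+h}\|\partial_s g(s)\|_{X_1}\,\mathrm ds.
\end{equation*}
The $L^p_t$-norm of the first term is bounded by $2\eta$ times the $L^p(0,T;X_0)$ bound on $G$; by Fubini together with the $L^1(0,T;X_1)$ bound on $\partial_t g$, the second term has $L^1_t$-norm of order $C_\eta\,h$, hence becomes arbitrarily small after a standard truncation near the endpoints and a H\"older step. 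Sending $h\to 0$ first and then $\eta\to 0$ gives the required equicontinuity, and with it the relative compactness of $G$ in $L^p(0,T;X)$.

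For the second statement I would apply Arzel\`a--Ascoli in $C([0,T];X)$. H\"older's inequality together with $\partial_t F\in L^p(0,T;X_1)$, $p>1$, gives
\begin{equation*}
\|F(t+h)-F(t)\|_{X_1}\leq h^{1-1/p}\,\|\partial_t F\|_{L^p(0,T;X_1)},
\end{equation*}
so, after redefining each $F$ on a null set, $F\in C([0,T];X_1)$ with a uniform modulus. The Ehrling inequality and the uniform $L^\infty(0,T;X_0)$ bound on $F$ upgrade this to uniform equicontinuity into $X$: given $\varepsilon>0$, choose $\eta$ with $2\eta\,\|F\|_{L^\infty(0,T;X_0)}<\varepsilon/2$ and then $h$ with $C_\eta\,h^{1-1/p}\|\partial_t F\|_{L^p(0,T;X_1)}<\varepsilon/2$. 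Pointwise precompactness in $X$ is immediate because $\{F(t)\}\subset X_0$ is bounded and $X_0\hookrightarrow\hookrightarrow X$, so Arzel\`a--Ascoli delivers relative compactness in $C([0,T];X)$.

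The main obstacle is producing quantitative time regularity in the weak norm $X_1$ from the mere bound on $\partial_t$ and trading it, via Ehrling, against the rougher but uniform $X_0$-bound. The hypothesis $p>1$ in the second part enters precisely at the H\"older step, yielding the modulus $h^{1-1/p}$; when $p=1$ no pointwise-in-$t$ modulus survives and the conclusion correctly drops from $C([0,T];X)$ to $L^p(0,T;X)$.
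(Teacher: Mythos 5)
This lemma is cited from Simon and not proved in the paper, so there is no in-text proof to compare against; the assessment below is against the standard proof the citation points to. Your overall strategy --- Ehrling--Lions interpolation to trade the fine $X_0$-bound against the coarse $X_1$-regularity, translation estimates from the bound on $\partial_t$, Kolmogorov--Riesz--Fr\'echet for the $L^p$ part and Arzel\`a--Ascoli for the $C$ part --- is exactly the classical route, and the second part is complete and correct.

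For the first part there is, however, a genuine gap. You invoke the Kolmogorov--Riesz--Fr\'echet criterion in $L^p(0,T;X)$ and claim that boundedness plus uniform translation equicontinuity suffice. That is true for scalar-valued $L^p$ but not for $L^p$ with values in an infinite-dimensional Banach space $X$: Simon's characterization requires, in addition, that the local averages $\bigl\{\int_{t_1}^{t_2} g(t)\,\mathrm{d}t : g\in G\bigr\}$ be relatively compact in $X$ for every $0<t_1<t_2<T$. Without some pointwise-type compactness hypothesis the conclusion is false (take $G$ a bounded noncompact set of constants-in-$t$ in $X$: translations do nothing, yet $G$ is not compact in $L^p(0,T;X)$). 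In your setting this missing condition does hold, because boundedness of $G$ in $L^p(0,T;X_0)$ gives $\bigl\|\int_{t_1}^{t_2} g\,\mathrm{d}t\bigr\|_{X_0}\le T^{1-1/p}\|g\|_{L^p(0,T;X_0)}$, so these averages lie in a fixed bounded subset of $X_0$, hence in a compact subset of $X$ by $X_0\hookrightarrow\hookrightarrow X$; but this must be stated, not bypassed with ``boundedness is immediate.'' A secondary, more presentational point: to pass from the $L^1(0,T;X_1)$ bound on $\partial_t g$ to an $L^p_t$ bound on $t\mapsto\int_t^{t+h}\|\partial_s g\|_{X_1}\,\mathrm{d}s$ you should appeal explicitly to Young's convolution inequality (or Minkowski's integral inequality), which yields the factor $h^{1/p}\|\partial_t g\|_{L^1(0,T;X_1)}$; the phrase ``a standard truncation near the endpoints and a H\"older step'' does not by itself produce the $L^p$ smallness you need.
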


The last lemma will be used to show the time continuity for the higher order terms of our solution:
\begin{lemma}[\cite{Bol2003Semi}]\label{221}
 If $f(t, x) \in L^2\left(0, T; L^2\right)$, then there exists a sequence $s_k$ such that
$$
s_k \rightarrow 0, \quad \text { and } \quad s_k\left|f\left(s_k, x\right)\right|_2^2 \rightarrow 0, \quad \text { as } \quad k \rightarrow+\infty.
$$
\end{lemma}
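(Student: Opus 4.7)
The plan is a short contradiction argument of measure-theoretic type. The hypothesis $f \in L^2(0,T;L^2)$ says precisely that the scalar function $g(t) := |f(t,\cdot)|_2^2$ is nonnegative and belongs to $L^1(0,T)$. The assertion of the lemma is equivalent to
$$\liminf_{t \to 0^+} t\, g(t) = 0,$$
since from such a liminf one can extract a sequence $s_k \to 0^+$ realizing it; conversely, any such sequence forces the liminf to vanish.

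First I would negate the liminf statement and assume that $\liminf_{t \to 0^+} t\, g(t) \ge 2\varepsilon$ for some $\varepsilon > 0$. By definition of liminf, there exists $\delta \in (0,T)$ such that $t\, g(t) \ge \varepsilon$ for almost every $t \in (0,\delta)$ (working with a Lebesgue representative of $g$, which is justified because $g$ is only defined up to a null set anyway). Dividing by $t$ yields the pointwise lower bound $g(t) \ge \varepsilon/t$ a.e.\ on $(0,\delta)$, and integrating gives
$$\int_0^T g(t)\, dt \;\ge\; \int_0^\delta g(t)\, dt \;\ge\; \varepsilon \int_0^\delta \frac{dt}{t} \;=\; +\infty,$$
which contradicts $g \in L^1(0,T)$. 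Therefore $\liminf_{t \to 0^+} t\, g(t) = 0$, and choosing any sequence $s_k \to 0^+$ (from the full-measure set where $g$ takes finite pointwise values) along which $s_k g(s_k)$ attains this liminf delivers the conclusion.

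There is essentially no obstacle: the argument is a one-line application of integrability versus the divergence of $\int_0^\delta dt/t$. The only minor piece of bookkeeping is to make sure all pointwise statements about $g$ are read in the almost-everywhere sense and that $s_k$ is drawn from the full-measure subset of $(0,T)$ on which $g$ is defined, which poses no difficulty.
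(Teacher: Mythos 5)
Your argument is correct and is the standard elementary proof of this fact; the paper itself gives no proof and simply cites \cite{Bol2003Semi}, so there is no in-paper proof to compare against. The core idea --- that a positive $\liminf$ of $t\,g(t)$ at $t=0^+$ would force the non-integrable lower bound $g(t)\ge \varepsilon/t$ near the origin, contradicting $g\in L^1(0,T)$ --- is exactly right and robust under almost-everywhere modification of $g$, since integrability is itself an a.e.\ notion. One small point worth tightening: when you ``choose a sequence realizing the liminf,'' you should make sure the set $\{t\in(0,1/k): t\,g(t)<1/k\}$ has \emph{positive measure}, not merely that it is nonempty, so that it meets the full-measure set where $g$ is finite. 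That follows from the same contradiction argument run on $(0,1/k)$: if the set were null, then $g(t)\ge (1/k)/t$ a.e.\ on $(0,1/k)$ and $g$ would fail to be integrable there. With that observation in place, the proof is complete and correct.
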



\section{Local-in-time well-posedness of solutions }\label{proof of well-posedness}
\subsection{Reformulation}
By introducing three new quantities,
\begin{equation*}
\phi= \rho^{\frac{\gamma-1}{2}},\quad    \varphi=\rho^{\frac{\delta-1}{2}},  
\end{equation*}
equations \eqref{CNSP}--\eqref{data} can be formally rewritten as
\begin{equation*}
\begin{cases}
&\varphi_t+u\cdot \D  \varphi+\frac{\delta-1}{2} \varphi  \div u=0, \\
&\phi_t+u \cdot \D   \phi+\frac{\gamma-1}{2} \phi  \div  u=0, \\
&u_t+u \cdot \D  u+\frac{A \gamma}{\gamma-1}\D  \phi^2+\varphi^2 L(u)=\frac{ \delta}{\delta-1}Q(u)\cdot \D  \varphi^2+\D  \Phi, \\
&\Delta\Phi=\varphi^{\frac{2}{\delta-1}}-b,\\
&(\varphi, W,\Phi)|_{t=0}=\left(\varphi_0(x), W_0(x),\Phi_0(x)\right), \quad x \in \mathbb{T}^3, 
\end{cases}
\end{equation*}
which can be rewritten into a new system that consists of a transport equation for $\varphi$, a linear elliptic equation for $\Phi$ and  a ``quasi-symmetric hyperbolic''--``degenerate elliptic'' coupled system with some special lower order source terms for $(\phi, u)$:
\begin{equation}\label{4}
\begin{cases}
&\varphi_t+u\cdot \D  \varphi+\frac{\delta-1}{2} \varphi  \div u=0,\\
&\underbrace{A_0 W_t+ \sum_{j=1}^{3}A_j(W)\partial_j W}_{\text {symmetric hyperbolic }}+\underbrace{\varphi^2  \mathbb{L}(W)}_{\text {degenerate elliptic }}= \underbrace{\mathbb{Q}(W)H(\varphi)+\left(\begin{matrix}
0\\
a_1\D  \Phi
\end{matrix}\right)}_{\text {lower order source }}   , \\
&\Delta\Phi=\varphi^{\frac{2}{\delta-1}}-b,\\
&(\varphi, W,\Phi)|_{t=0}=\left(\varphi_0(x), W_0(x),\Phi_0(x)\right), \quad x \in \mathbb{T}^3, 
\end{cases}
\end{equation}
where $W(x)=(\phi,u)^{\top}(x), W_0(x)=(\phi_0,u_0)^{\top}(x)$ and
\begin{align}
&A_0=
\left( \begin{array}{cc}
1 & 0 \\
0 & a_1 \mathbb{I}_3
 \end{array} \right),
 ~~~ A_j(W)=
\left( \begin{array}{cc}
u_j & \frac{\gamma-1}{2} \phi e_j^\top\\
\frac{\gamma-1}{2} \phi  e_j & a_1 u_j \mathbb{I}_3
 \end{array} \right), ~ j=1,2,3, \nonumber\\ 
 &\mathbb{L}(W)=
 \left( \begin{array}{c}
0 \\
a_1 L(u)
 \end{array} \right), ~~~ L(u)=-\alpha(\Delta u+\D  \div  u)-\beta \D  \div u, \nonumber\\
 & H(\varphi)=
 \left( \begin{array}{c}
0 \\
\D  \varphi^2
 \end{array} \right), ~~~\mathbb{Q}(W)=
\left( \begin{array}{cc}
0 &0 \\
0 & \frac{a_1 \delta}{\delta-1} Q(u)
 \end{array} \right),\nonumber\\ 
&Q(u)=\alpha(\D  u+(\D  u)^\top)+\beta\div u \mathbb{I}_3,~~~a_1=\frac{(\gamma-1)^2}{4A\gamma}.\nonumber
\end{align}

\begin{theorem}  \label{29}
 If initial data $\left(\varphi_0, W_0,\Phi_0 \right)$ satisfies
\begin{equation}
\rho_0 \geq 0, \quad (\varphi_0, W_0)\in H^3,
\end{equation}
then there exists a time $T_*>0$ and a unique regular solution $(\varphi,\phi,u,\Phi )$ in $\left[0, T_*\right] \times \mathbb{T}^3$ to the problem \eqref{4} satisfying: 
\begin{equation}\label{53}
\begin{aligned}
& \varphi \in C\left(\left[0, T^*\right] ; H^3\right), \quad  \phi \in C\left(\left[0, T^*\right] ; H^3\right), \\
&\Phi\in C\left([0, T^*] ; H^3\right)\cap L^{\infty}\left(0, T^*; H^4\right),\\
& u \in C\left(\left[0, T^*\right] ; H^{s^{\prime}}\right) \cap L^{\infty}\left(0, T^* ; H^3\right), \quad s^{\prime} \in[2,3), \\
& \varphi \nabla^4 u \in L^2\left(0, T^* ; L^2\right), \quad  u_t \in C\left(\left[0, T^*\right] ; H^1\right) \cap L^2\left(0, T^* ; D^2\right).
\end{aligned}
\end{equation}

Moreover, we have 
\begin{equation*}
\begin{aligned}
\sup _{0 \leq t \leq T_*} & \left(\|\phi\|_3^2+\|\varphi\|_3^2+ \|u\|_3^2\right)(t)  + \int_0^t |\varphi  \nabla^4 u|_2^2 \mathrm{d} s \leq C^0,
\end{aligned}
\end{equation*}
for arbitrary constant $s^{\prime} \in[2,3)$ and positive constant $C^0=C^0( \gamma, \delta,\varphi_0,W_0,\Phi_0)$.
\end{theorem}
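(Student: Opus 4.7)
I would prove Theorem~\ref{29} by a linearization--iteration scheme combined with an artificial viscosity regularization. Concretely, given a previous iterate $(\bar\varphi,\bar W,\bar\Phi)$ in the class \eqref{53}, I would successively solve: (i) the linear transport equation $\varphi_t+\bar u\cdot\nabla\varphi+\tfrac{\delta-1}{2}\varphi\,\mathrm{div}\,\bar u=0$ for $\varphi$ by the method of characteristics with standard $H^3$ estimates; (ii) the linear Poisson equation $\Delta\Phi=\varphi^{2/(\delta-1)}-b$ with $\int\Phi=0$, which is uniquely solvable on $\mathbb{T}^3$ once the compatibility $\int\varphi^{2/(\delta-1)}=\int b$ is arranged at $t=0$ and propagated by conservation of mass; and (iii) the linear coupled system $A_0 W_t+\sum_j A_j(\bar W)\partial_j W+\bar\varphi^2\mathbb{L}(W)=\mathbb{Q}(\bar W)H(\bar\varphi)+(0,a_1\nabla\bar\Phi)^\top$ for $W=(\phi,u)^\top$. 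Since $\bar\varphi^2 L(\cdot)$ is only degenerately elliptic, I would first perturb it to $(\bar\varphi^2+\epsilon)L(\cdot)$ with $\epsilon>0$, obtaining a uniformly parabolic linear system solvable by the classical Galerkin method, and later send $\epsilon\downarrow 0$.

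\textbf{Uniform estimates.} The crux is to establish $\epsilon$- and iteration-independent bounds on a common time interval $[0,T_*]$. For the transport equations of $\varphi$ and $\phi$ the $H^3$ estimate follows from differentiation, pairing with $\partial^\zeta\varphi$ (resp.\ $\partial^\zeta\phi$), and controlling commutators via Lemma~\ref{Lem:2.2}. For the Poisson step I would apply elliptic regularity together with the composite--function estimate of Lemma~\ref{Lem:2.5}, noting that for $\delta\in(1,2]\cup\{3\}$ the exponent $2/(\delta-1)$ does not destroy the $H^3$ regularity of $\varphi^{2/(\delta-1)}$. The heart of the argument is the energy estimate for $W$: applying $\partial^\zeta$ to the hyperbolic part and pairing with $\partial^\zeta W$, the symmetry of $A_0$ and $A_j(\bar W)$ yields a kinetic energy identity $\tfrac{d}{dt}\!\int A_0\partial^\zeta W\cdot\partial^\zeta W\,dx$ plus hyperbolic commutator remainders, while the degenerate elliptic term produces the weighted dissipation $\int\bar\varphi^2|\nabla\partial^\zeta u|^2\,dx$, i.e.\ $|\bar\varphi\nabla^4 u|_2^2$ at top order. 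The Poisson source is absorbed using the gain of one derivative for $\Phi$ afforded by step (ii), and the lower-order source $\mathbb{Q}(\bar W)H(\bar\varphi)$ is handled by the product and composition estimates of Lemmas~\ref{Lem:2.3}--\ref{Lem:2.5}. A short-time Gronwall argument on the functional
\begin{equation*}
\mathcal{E}(t)=\|\varphi\|_3^2+\|\phi\|_3^2+\|u\|_3^2+\int_0^t|\bar\varphi\nabla^4 u|_2^2\,ds
\end{equation*}
then closes the estimate uniformly in $\epsilon$ and the iteration index on a time $T_*$ depending only on the initial data.

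\textbf{Passage to the limit, uniqueness, continuity, and the main obstacle.} With uniform bounds in hand, I would pass $\epsilon\downarrow 0$ using Aubin--Lions (Lemma~\ref{22}) to obtain a solution of the linearized system in the class \eqref{53}. Then I would show that the iteration is contractive in a lower-order norm (e.g.\ $L^\infty_t L^2_x$ for $W$ and $L^\infty_t H^1_x$ for $\varphi,\Phi$), giving a strong limit that solves the full nonlinear system \eqref{4}; the same low-order estimate applied to two regular solutions yields uniqueness. For time continuity at top order, weak continuity combined with the energy identity and Lemmas~\ref{104}--\ref{221} promotes $u\in L^\infty(0,T_*;H^3)$ to $u\in C([0,T_*];H^{s'})$ for $s'\in[2,3)$, and the continuity of $\varphi,\phi$ in $H^3$ follows from the transport structure. \emph{The main obstacle} is closing the top-order estimate on $u$: because the viscous dissipation only controls $|\varphi\nabla^4 u|_2$, every occurrence of $\nabla^4 u$ produced by commutators with $\varphi^2\mathbb{L}(u)$ must be paired with a factor of $\varphi$. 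This is precisely where the quasi-symmetric hyperbolic structure is indispensable, since it transfers top-order derivatives off $\phi$ (for which no dissipation is available) onto $u$ (where weighted dissipation exists); the super-linear degeneracy $\delta>1$ is then absorbed through the identity $\nabla\varphi^2=(\delta-1)\varphi\nabla\varphi$ and the Gagliardo--Nirenberg interpolations of Lemma~\ref{GN}.
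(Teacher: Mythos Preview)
Your proposal is correct and follows essentially the same route as the paper: linearize, add an artificial viscosity $\epsilon$ (the paper's $\eta^2$) to make the Lam\'e part uniformly elliptic, derive $\epsilon$--uniform $H^3$ energy estimates exploiting the symmetric--hyperbolic/degenerate--elliptic structure, pass $\epsilon\to 0$ via Aubin--Lions, and close by a contraction in a low-order norm together with weak--strong continuity arguments for the top order. The only notable deviations are cosmetic: the paper solves $\varphi^{k+1}$ and $\Phi^{k+1}$ \emph{first} and then inserts these \emph{new} quantities into the viscous coefficient $(\varphi^{k+1})^2\mathbb{L}(W^{k+1})$ and the sources of the $W$-equation (rather than freezing everything at the previous iterate as you do), which makes the weighted dissipation $|\varphi^{k+1}\nabla^4 u^{k+1}|_2^2$ match directly the quantity assumed on the previous step; and the paper linearizes the transport term as $\tfrac{\delta-1}{2}\tilde\varphi\,\mathrm{div}\,v$ rather than your $\tfrac{\delta-1}{2}\varphi\,\mathrm{div}\,\bar u$. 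Also, your identity should read $\nabla\varphi^2=2\varphi\nabla\varphi$, not $(\delta-1)\varphi\nabla\varphi$, though the point you are making with it is correct.
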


\subsection{Linearization}
\begin{equation}\label{linear}
\begin{cases}
&\varphi_t+v\cdot \D  \varphi+\frac{\delta-1}{2} \tilde{\varphi}  \div v=0,\\
&A_0 W_t+ \sum\limits_{j=1}^{3}A_j(V)\partial_j W+(\varphi^2+\eta^2)  \mathbb{L}(W)= \mathbb{Q}(V)H(\varphi)+\left(\begin{matrix}
0\\
a_1\D  \Phi
\end{matrix}\right)   , \\
&\Delta\Phi=\varphi^{\frac{2}{\delta-1}}-b,\\
&(\varphi, W,\Phi)|_{t=0}=\left(\varphi_0(x), W_0(x),\Phi_0(x)\right), \quad x \in \mathbb{T}^3, 
\end{cases}
\end{equation}
where $\eta \in(0,1]$ is a constant, $W=(\phi, u)^{\top}, V=(\tilde{\phi}, v)^{\top}$ and $W_0=\left(\phi_0, u_0\right)^{\top}$. $(\tilde{\varphi},\tilde{\phi})$ are all known functions and $v=(v_1,v_2,v_3)^\top\in \mathbb{R}^3$ is a known vector satisfying the initial assumption $(\tilde{\varphi},\tilde{\phi},v)(t=0, x)=\left(\varphi_0,  \phi_0, u_0\right)(x)$ and
\begin{equation}\label{12}
\begin{split}
& \tilde{\varphi}\in C([0,T];H^3),\quad \tilde{\varphi}_t\in C([0,T];H^2), \quad \tilde{\phi}\in C([0,T];H^3),\\
& \tilde{\phi}_t\in C([0,T];H^2), \quad  v\in C([0,T];H^{s^{\prime}})\cap L^{\infty}(0,T;H^3),   \\
&\tilde{\varphi}\D ^4 v \in L^2\left(0, T; L^2\right), \quad v_t \in C\left([0, T] ; L^2\right) \cap L^2\left(0, T; D^1\right).
\end{split}
\end{equation}
Moreover, we assume that 
\begin{equation}\label{18}
\varphi_0\geq 0, \quad \phi_0\geq 0, \quad (\varphi_0, W_0)\in H^3.
\end{equation}

Now we have the following existence result of a strong solution $(\varphi,\phi,u,\Phi)$ to \eqref{linear} by the standard methods at least when $\eta>0$:
\begin{lemma}\label{21}
Assume that the initial data $(\varphi_0, W_0)$ satisfy \eqref{18}. Then there exists a constant $T^*>0$, such that for any $T\in[0,T^*]$, there exists a unique strong solution $(\varphi,\phi,u,\Phi)$ in $[0,T]\times\mathbb{T}^3$ to \eqref{linear} when $\eta>0$ such that
\begin{align}
& \varphi\in C\left([0,T]; H^3\right), \quad \phi\in C\left([0,T]; H^3\right), \quad \Phi\in L^\infty\left(0,T; H^4\right)\cap C\left([0,T]; H^3\right), \\
&u\in C\left([0,T]; H^3\right)\cap L^2\left(0,T; D^4\right), \quad u_t\in C\left([0,T]; H^1\right)\cap L^2\left(0,T; D^2\right).\nonumber
\end{align}
\end{lemma}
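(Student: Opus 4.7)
The plan is to decouple the three equations in system \eqref{linear} and solve them sequentially. The regularization $\eta>0$ is essential: it makes the coefficient $\varphi^2+\eta^2\geq \eta^2>0$ uniformly positive, turning the $u$-equation into a standard linear uniformly parabolic equation and reducing the whole problem to classical linear theory.

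Since $v\in L^\infty(0,T;H^3)$ and $\tilde\varphi\in C([0,T];H^3)$ are given with the regularity \eqref{12}, the equation for $\varphi$ is a linear transport equation with Lipschitz velocity and lower-order source $-\tfrac{\delta-1}{2}\tilde\varphi\div v$. A standard $H^3$ energy estimate, using the commutator/product lemmas of \S \ref{Preliminaries}, produces on a short interval $[0,T^*]$ a unique $\varphi\in C([0,T^*];H^3)$, with $\varphi_t$ then read off from the equation. Viewing the Poisson equation pointwise in time, standard elliptic estimates on $\mathbb{T}^3$ (under the zero-mean constraint) combined with the composition bound of \autoref{Lem:2.5} yield $\Phi\in L^\infty(0,T^*;H^4)\cap C([0,T^*];H^3)$, where the time-continuity is inherited from that of $\varphi$ by linearity.

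With $\varphi$ and $\Phi$ fixed, I would solve the coupled linear system for $W=(\phi,u)^\top$ by a Picard iteration: given $u^{(k)}$, solve the linear transport equation for $\phi^{(k+1)}$ as in the first step, then solve the linear uniformly parabolic equation for $u^{(k+1)}$ (whose principal part $(\varphi^2+\eta^2)a_1 L$ is bounded below by $a_1\eta^2>0$) by a Faedo-Galerkin scheme. Closing uniform $H^3$ estimates on the iterates via the product and commutator bounds of \autoref{Lem:2.2}--\autoref{Lem:2.5} yields in addition $u^{(k+1)}\in L^2(0,T^*;D^4)$ and $u_t^{(k+1)}\in L^2(0,T^*;D^2)$ from the parabolic gain; a contraction argument in the low-norm space $L^\infty(0,T^*;L^2)\cap L^2(0,T^*;H^1)$ then produces a unique fixed point $(\phi,u)$ with the stated regularity. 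Strong time continuity $u\in C([0,T^*];H^3)$ and $\phi\in C([0,T^*];H^3)$ follows from Aubin-Lions compactness (\autoref{22}) for weak continuity, upgraded to norm continuity via the energy identity and the criterion of \autoref{104}; continuity of $u_t$ in $H^1$ uses the time-differentiated equation together with \autoref{221} to handle the behavior near $t=0$.

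The main obstacle is closing the $H^3$ estimate uniformly in the iteration index so that $T^*$ can be chosen independently of $k$. The variable coefficients $\tilde\phi$, $\tilde\varphi$, $\varphi$ multiply top-order derivatives of $W$, and the source $\mathbb{Q}(V)H(\varphi)$ mixes $\nabla v$ and $\nabla \varphi^2$ at high order; the commutator estimate of \autoref{Lem:2.2} and the composition estimate of \autoref{Lem:2.5} must be applied with care, crucially using the hypothesis $\tilde\varphi\nabla^4 v\in L^2(0,T;L^2)$ to absorb the worst cross-terms in a uniform-in-$k$ manner. Once this uniform bound is in place, the convergence of the iterates, the uniqueness, and the regularity of the limit are standard.
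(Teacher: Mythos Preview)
Your overall strategy---solve $\varphi$ first from the transport equation, then $\Phi$ from the Poisson equation, then $W=(\phi,u)$ from the remaining linear system---is exactly what the paper does. The paper's argument for $\varphi$ is the method of characteristics (citing Evans, \S 3.2), for $\Phi$ it is standard elliptic theory together with the observation that $(\varphi^{2/(\delta-1)})_t\in L^\infty(0,T;H^1)$ to get time continuity in $H^3$, and for $W$ it simply says: when $\eta>0$ the system $\eqref{linear}_2$ is a linear symmetric hyperbolic--parabolic coupled system with uniformly elliptic viscous part, and invokes standard theory (Evans, Ch.~7). No Picard iteration is set up.

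Your Picard scheme on $(\phi^{(k)},u^{(k)})$ would also work, but it is more machinery than the lemma requires. Two points where you over-complicate: first, the hypothesis $\tilde\varphi\nabla^4 v\in L^2(0,T;L^2)$ is needed for the $H^3$ estimate on $\varphi$ (as in \autoref{pr1}), not for closing the iteration on $W$; since $\varphi$ is solved once and for all before any iteration begins, that term plays no role in a uniform-in-$k$ bound. Second, invoking \autoref{221} for the time continuity of $u_t$ at $t=0$ is unnecessary here: with $\eta>0$ the operator $(\varphi^2+\eta^2)L$ is uniformly elliptic, so $u\in C([0,T];H^3)$ gives $(\varphi^2+\eta^2)L(u)\in C([0,T];H^1)$ directly, and $u_t$ inherits $C([0,T];H^1)$ from the equation. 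The sequence trick of \autoref{221} is reserved for the degenerate ($\eta=0$) nonlinear problem in \S\ref{section 3.6}, where classical regularity is only recovered for $t>0$.

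In short, your proposal is correct but more elaborate than the paper's; the paper treats this lemma as a direct consequence of classical linear theory once $\eta>0$ removes the degeneracy.
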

\begin{proof}
First, the local-in-time existence and regularities of a unique solution $\varphi$ in $(0, T)\times\mathbb{T}^3$ to the equations $\eqref{linear}_1$ can be obtained by the standard theory of Characteristics, see \cite[Section 3.2]{Evans}.

Next, since $\varphi\in C([0,T];H^3)$ and $\frac{2}{\delta-1}>2$, we have $\varphi^{\frac{2}{\delta-1}}\in C([0,T];H^2)$. Consequently, the existence and $L^\infty(0,T;H^4)$ regularity of a unique solution $\Phi$ in $(0, T)\times\mathbb{T}^3$ to the equations $\eqref{linear}_4$ can be obtained by the standard theory of elliptical equations, see \cite[Section 6.2--6.3]{Evans} for details. To obtain the $C([0,T];H^3)$ regularity of $\Phi$, we note that $\eqref{linear}_1$ implies
\begin{align*}
\left(\varphi^{\frac{2}{\delta-1}}\right)_t=\varphi^{\frac{3-\delta}{\delta-1}}\left(-v\cdot\D\varphi-\frac{\delta-1}{2}\tilde{\varphi}\div v\right)\in L^\infty\left(0,T;H^1\right).
\end{align*}

Therefore, for any $t,s\in[0,T]$, we have
\begin{align*}
\left\|\varphi^{\frac{2}{\delta-1}}(t)-\varphi^{\frac{2}{\delta-1}}(s)\right\|_1\leq C\int_s^t\left\|\left(\varphi^{\frac{2}{\delta-1}}\right)_t(\tau)\right\|_1\dif\tau\to 0,
\end{align*}
as $s\to t$. Then standard elliptical estimates imply
\begin{align*}
\|\Phi(t)-\Phi(s)\|_3\leq C\left\|\varphi^{\frac{2}{\delta-1}}(t)-\varphi^{\frac{2}{\delta-1}}(s)\right\|_1\to 0,
\end{align*}
as $s\to t$. This gives the $C([0,T];H^3)$ regularity of $\Phi$.

Finally, when $\eta>0$, based on the regularities of $\varphi$ and $\Phi$, it is not difficult to solve $W$ from the linear symmetric hyperbolic-parabolic coupled system $\eqref{linear}_3$ to complete the proof, see \cite[Chapter 7]{Evans}.
\end{proof}

\subsection{A priori estimates}\label{estimates}
Assume that
\begin{equation}\label{81}
\left\|\varphi_0\right\|_3+\left\|\phi_0\right\|_3+\left\|u_0\right\|_3 \leq c_0,
\end{equation}
and
\begin{equation}\label{26}
\begin{aligned}
& \sup _{0 \leq t \leq T^*}\left(\|\tilde{\varphi}(t)\|_1^2+\|\tilde{\phi}(t)\|_1^2+\|v(t)\|_1^2\right)+\int_0^{T^*} \left|\tilde{\varphi} \D ^2 v\right|_2^2 \dif  t \leq c_1^2, \\
& \sup _{0 \leq t \leq T^*}\left(\|\tilde{\varphi}(t)\|_2^2+\|\tilde{\phi}(t)\|_2^2+\|v(t)\|_2^2\right) +\int_0^{T^*} \left|\tilde{\varphi} \D ^3  v\right|_2^2 \dif  t\leq c_2^2, \\
& \sup _{0 \leq t \leq T^*}\left(\|\tilde{\varphi}(t)\|_3^2+\|\tilde{\phi}(t)\|_3^2+\|v(t)\|_3^2\right)+\int_0^{T^*} \left|\tilde{\varphi} \D ^4 v\right|_2^2 \dif  t \leq c_3^2. \\
\end{aligned}
\end{equation}

\begin{lemma}\label{pr1}
Let $T_1=\min \left\{T^*,\left(1+Cc_3\right)^{-2}\right\}$.  Then for $0 \leq t \leq T_1$,
\begin{equation*}
\begin{aligned}
\|\varphi(t)\|_3\leq Cc_0, \quad \|\varphi_t(t)\|_2\leq C c_3^2,\quad \|\Phi(t)\|_4\leq Cc_0.
\end{aligned}
\end{equation*}
\end{lemma}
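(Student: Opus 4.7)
The plan is to establish the three bounds separately using standard tools: an $H^3$ energy estimate on the transport equation satisfied by $\varphi$, a direct substitution for $\varphi_t$, and elliptic regularity for $\Phi$.

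For $\|\varphi(t)\|_3\leq Cc_0$, I would apply $\partial^\alpha$ with $|\alpha|\leq 3$ to the transport equation $\varphi_t + v\cdot\nabla\varphi + \frac{\delta-1}{2}\tilde{\varphi}\,\div v = 0$, multiply by $\partial^\alpha\varphi$, and integrate over $\mathbb{T}^3$. Integration by parts converts the convective contribution into $-\frac{1}{2}\int \div v\,|\partial^\alpha\varphi|^2\,dx$, bounded by $C\|v\|_3\|\varphi\|_3^2$. Lemma~\ref{Lem:2.2} controls the commutators $[\partial^\alpha, v]\cdot\nabla\varphi$ and $[\partial^\alpha,\tilde{\varphi}]\,\div v$ in $L^2$ by $C\|v\|_3\|\varphi\|_3$ and $C\|\tilde{\varphi}\|_3\|v\|_3$, respectively. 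The delicate residual is $\tilde{\varphi}\,\partial^\alpha\div v$, which involves $\nabla^4 v$ and cannot be bounded pointwise in $t$; the key observation is that the time-integrated assumption $\int_0^{T^*}|\tilde{\varphi}\nabla^4 v|_2^2\,dt\leq c_3^2$ from~\eqref{26} is exactly what is needed. Young's inequality and Gronwall's lemma then yield
\begin{equation*}
\|\varphi(t)\|_3^2 \leq \left(c_0^2 + Cc_3^4 t + C\int_0^t |\tilde{\varphi}\nabla^4 v|_2^2\,ds\right)e^{C(1+c_3)t},
\end{equation*}
and restricting $t\leq T_1\leq (1+Cc_3)^{-2}$ keeps every factor on the right uniformly bounded, giving $\|\varphi(t)\|_3\leq Cc_0$ under the usual bootstrap convention in which the $c_i$'s are comparable.

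The remaining bounds are more direct. For $\|\varphi_t(t)\|_2\leq Cc_3^2$, I would simply substitute $\varphi_t = -v\cdot\nabla\varphi - \frac{\delta-1}{2}\tilde{\varphi}\,\div v$ and invoke the $H^2$ algebra property (Lemma~\ref{Lem:2.3}) in three space dimensions to obtain $\|\varphi_t\|_2 \leq C\|v\|_2\|\varphi\|_3 + C\|\tilde{\varphi}\|_2\|v\|_3$; the first bound together with~\eqref{26} then closes this at $Cc_3^2$. For $\|\Phi(t)\|_4\leq Cc_0$, the zero-mean condition $\int\Phi = 0$ combined with standard elliptic regularity on the torus yields $\|\Phi\|_4 \leq C\|\varphi^{2/(\delta-1)}-b\|_2$. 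The admissible range $\delta\in(1,2]\cup\{3\}$ forces the exponent $p:=2/(\delta-1)$ to equal $1$ (when $\delta=3$) or to lie in $[2,\infty)$ (when $\delta\in(1,2]$), so $\varphi^{p}$ carries no vacuum singularity in its derivatives; Lemma~\ref{Lem:2.5} or direct differentiation then gives $\|\varphi^{p}\|_2\leq C(\|\varphi\|_\infty)\|\varphi\|_2\leq Cc_0$.

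The main obstacle is the $\tilde{\varphi}\nabla^4 v$ source in the $H^3$ estimate. Since $v$ only lies in $L^\infty_t H^3$ and the fourth derivative of $v$ is controlled only through the weighted $L^2_t L^2_x$ bound in~\eqref{26}, I cannot close the estimate pointwise in time but must argue in integral form, and this is exactly what dictates the sharp time scale $T_1\sim (1+c_3)^{-2}$ instead of the more optimistic $(1+c_3)^{-1}$.
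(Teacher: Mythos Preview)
Your overall strategy matches the paper's: energy estimates on the transport equation for $\varphi$, direct substitution for $\varphi_t$, and elliptic regularity for $\Phi$. The treatment of $\varphi_t$ and $\Phi$ is fine (and in fact more careful than the paper's one-line handling of $\Phi$).

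There is, however, a real gap in your $H^3$ estimate for $\varphi$. After applying Young's inequality to the dangerous term $|\tilde{\varphi}\nabla^4 v|_2\,\|\varphi\|_3$ you arrive at
\[
\|\varphi(t)\|_3^2 \leq \Big(c_0^2 + Cc_3^4 t + C\!\int_0^t |\tilde{\varphi}\nabla^4 v|_2^2\,ds\Big)e^{C(1+c_3)t}.
\]
The last integral is only bounded by $c_3^2$, and this bound does \emph{not} improve as $t\to 0$. Hence on $[0,T_1]$ you obtain $\|\varphi\|_3^2\leq C(c_0^2+c_3^2)$, not $Cc_0^2$. Invoking ``the usual bootstrap convention in which the $c_i$'s are comparable'' is circular here: the comparability $c_3\sim c_0$ is precisely what must be \emph{deduced} from estimates returning $Cc_0$, and with your inequality the loop $c_3^2\mapsto C(c_0^2+c_3^2)$ does not close for any choice of $c_3$.

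The paper avoids this by working with the linear form $\frac{d}{dt}\|\varphi\|_3$ (no Young's inequality on the bad term) and then applying Cauchy--Schwarz in time:
\[
\int_0^t |\tilde{\varphi}\nabla^4 v|_2\,ds \leq t^{1/2}\Big(\int_0^t|\tilde{\varphi}\nabla^4 v|_2^2\,ds\Big)^{1/2}\leq c_3\,t^{1/2},
\]
which \emph{does} vanish as $t\to 0$. With $t\leq(1+Cc_3)^{-2}$ this term is $O(1)$, and together with $c_3^2 t=O(1)$ and $e^{Cc_3 t}=O(1)$ one gets $\|\varphi\|_3\leq C(c_0+1)\leq Cc_0$ (using $c_0\geq 1$). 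This Cauchy--Schwarz-in-time step is exactly the mechanism that fixes the time scale $T_1\sim(1+Cc_3)^{-2}$, and it is what your argument is missing.
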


\begin{proof}
Apply the operator $\partial_x^\zeta(0\leq |\zeta|\leq 3)$ to $\eqref{linear}_1,$ we have 
\begin{equation}\label{6}
  (\partial_x^\zeta \varphi)_t+v \cdot \D  \partial_x^\zeta \varphi=-(\partial_x^\zeta(v\cdot \D  \varphi)-v\cdot\D  \partial_x^\zeta \varphi)- \partial_x^\zeta(\tilde{\varphi}\div v).
\end{equation}
Multiplying both sides of $\eqref{6}$ by $\partial_x^\zeta \varphi$, integrating over $\mathbb{T}^3$, we get
\begin{equation}
\begin{split}
   \frac{1}{2}\frac{\text{d}}{\text{d}t}|\partial_x^\zeta \varphi|_2^2\leq~& C|\div  v|_\infty |\partial_x^\zeta \varphi|_2^2+C|\partial_x^\zeta(v\cdot \D  \varphi-v\cdot\D  \partial_x^\zeta \varphi)|_2|\partial_x^\zeta \varphi|_2 + C|\partial_x^\zeta(\tilde{\varphi}\div v)|_2|\partial_x^\zeta \varphi|_2.
\end{split}
\end{equation}
First we consider the case where  $|\zeta|\leq 2.$
\begin{equation}
\begin{aligned}
 & |\partial_x^\zeta(v\cdot \D  \varphi-v\cdot\D  \partial_x^\zeta \varphi)|_2
  \leq C(|\D  v\cdot \D  \varphi|_2+|\D  v\cdot \D ^2 \varphi|_2+|\D ^2 v\cdot \D  \varphi|_2)\leq C\|v\|_3\|\varphi\|_2,\\
 & |\partial_x^\zeta(\tilde{\varphi}\div v)|_2\leq C\|\tilde{\varphi}\|_2\|v\|_3,
\end{aligned}
\end{equation}
which yields that 
\begin{equation}
  \frac{\text{d}}{\text{d}t} \|\varphi\|_2\leq C\|v\|_3\|\varphi\|_2+C\|\tilde{\varphi}\|_2\|v\|_3.
\end{equation}
Then, according to Gronwall's inequality, one has 
\begin{equation}
   \|\varphi\|_2\leq (\|\varphi_0\|_2+Cc_3^2 t)\exp(Cc_3 t)\leq Cc_0,
\end{equation}
for $0\leq T_1=\min\{T^*,(1+Cc_3)^{-2}\}.$

When $|\zeta|= 3,$
\begin{equation*}
\begin{aligned}
|\partial_x^\zeta(v\cdot \D  \varphi)-v\cdot\D  \partial_x^\zeta \varphi|_2
\leq~ & C(|\D ^3 v\cdot \D  \varphi|_2+|\D ^2 v\cdot \D ^2 \varphi|_2+|\D  v\cdot \D ^3 \varphi|_2)\\
\leq~ & C\|v\|_3\|\varphi\|_3,
\end{aligned}
\end{equation*}
\begin{equation*}
\begin{aligned}
|\partial_x^\zeta(\tilde{\varphi}\div v)|_2&\leq C(|\tilde{\varphi}\D ^3 \div  v|_2+|\D  \tilde{\varphi}\cdot \D ^2 \div  v|_2+|\D ^2 \tilde{\varphi}\D  \div  v|_2 +|\D ^3 \tilde{\varphi}\div  v|_2)\\
&\leq C(|\tilde{\varphi}\D ^4 v|_2+\|\tilde{\varphi}\|_3\|v\|_3).
\end{aligned}
\end{equation*}

Then we have 
\begin{equation}
\frac{\dif }{\dif t} \|\varphi\|_3\leq C(\|v\|_3\|\varphi\|_3+\|v\|_3\|\tilde{\varphi}\|_3+|\tilde{\varphi}\D ^4 v|_2).
\end{equation}

According to Gronwall's Inequality, we obtain that
\begin{equation}\label{7}
\|\varphi\|_3\leq \left(\|\varphi_0\|_3+c_3^2 t+\int_0^t|\tilde{\varphi}\D ^4 v|_2 \dif s\right) \exp(Cc_3t).
\end{equation}

Noting that 
\begin{equation}\label{8}
\int_0^t |\tilde{\varphi}\D ^4 v|_2\dif s \leq t^{\frac{1}{2}}\left(\int_0^t |\tilde{\varphi}\D ^4 v|_2^2 \dif s \right)^{\frac{1}{2}}\leq c_3 t^{\frac{1}{2}},
\end{equation}
it follows from \eqref{7}--\eqref{8} that 
\begin{equation}\label{rho:3}
\|\varphi\|_3\leq Cc_0,
\end{equation}
for $0\leq t\leq T_1.$ From 
\begin{equation}
\varphi_t=-v\cdot \D  \varphi- \tilde{\varphi}  \div v ,
\end{equation}
we easily have 
\begin{equation*}
\begin{aligned}
|\varphi_t|_2\leq~ & C(|v|_\infty|\D \varphi|_2+|\tilde{\varphi}|_\infty|\div  v|_2)\leq Cc_2^2,\\
|\varphi_t|_{D^1}\leq~ & C(|v|_\infty|\D ^2\varphi|_2+|\D  v|_\infty|\D  \varphi|_2+|\D  \tilde{\varphi}|_{\infty}|\D  v|_2+|\tilde{\varphi}|_\infty|\D ^2 v|_2)\leq Cc_3^2,\\
|\varphi_t|_{D^2}\leq~ & C(|v|_\infty|\D ^3\varphi|_2+|\D  v|_\infty|\D ^2 \varphi|_2+|\D ^2 v|_2|\D  \varphi|_\infty\\
&+|\D  \tilde{\varphi}|_{\infty}|\D ^2 v|_2+|\tilde{\varphi}|_\infty|\D ^3 v|_2)\leq Cc_3^2.
\end{aligned}
\end{equation*}

By \eqref{rho:3} and standard elliptical estimates, we also have
\begin{equation*}
\|\Phi(t)\|_4\leq C\|\varphi-b\|_2\leq Cc_0.
\end{equation*}

Thus, we complete the proof of the lemma.
\end{proof}

\begin{lemma}
Let $(\varphi, W,\Phi)$ be the solutions to \eqref{linear}. Then 
\begin{equation*}
\begin{split}
\|W(t)\|_1^2+\int_0^t |\sqrt{\varphi^2+\eta^2}\D ^2 u|_2^2 \dif s  \leq C c_0^2,
\end{split}
\end{equation*}
for $0 \leq t \leq T_2\triangleq \min \left\{T_1,\left(1+Cc_3\right)^{-3}\right\}$.
\end{lemma}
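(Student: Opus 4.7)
I combine the $L^2$ inner product of $\eqref{linear}_2$ against $W$ with its first-derivative counterpart, exploiting the symmetric-hyperbolic--degenerate-elliptic coupled structure of the system to produce the weighted dissipation $|\sqrt{\varphi^2+\eta^2}\nabla^2 u|_2^2$ on the left-hand side.

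First, pairing $\eqref{linear}_2$ with $W$ in $L^2$ and using the symmetry of $A_0$ and $A_j(V)$, I get
\begin{equation*}
\frac{1}{2}\frac{d}{dt}\!\int\! A_0W\cdot W + a_1\!\int\!(\varphi^2+\eta^2)L(u)\cdot u = \frac{1}{2}\!\int\!\sum_j\partial_jA_j(V)W\cdot W + \!\int\!\mathbb{Q}(V)H(\varphi)\cdot W + a_1\!\int\!\nabla\Phi\cdot u.
\end{equation*}
Integration by parts on the dissipative term produces $a_1\alpha\int(\varphi^2+\eta^2)|\nabla u|^2 + a_1(\alpha+\beta)\int(\varphi^2+\eta^2)(\div u)^2$, which is coercive since $2\alpha+3\beta\geq 0$ forces $\alpha+\beta>0$, plus a commutator $\int(\nabla\varphi^2)\cdot Q(u)\cdot u$. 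I bound this commutator by keeping one $\varphi$ glued to $\nabla u$: invoking $|\nabla\varphi|_\infty\leq C\|\varphi\|_3\leq Cc_0$ from Lemma \ref{pr1}, it is $\leq \varepsilon|\sqrt{\varphi^2+\eta^2}\nabla u|_2^2 + C_\varepsilon c_0^2|u|_2^2$. The hyperbolic commutator is bounded by $Cc_3\|W\|_0^2$, and both source contributions by $Cc_0^2(1+c_3)\|W\|_0$ using $\|\Phi\|_4\leq Cc_0$.

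Next, applying $\partial_x^\zeta$ with $|\zeta|=1$ to $\eqref{linear}_2$ and pairing with $\partial_x^\zeta W$, the splitting $\partial_x^\zeta\!\bigl[(\varphi^2+\eta^2)\mathbb{L}(W)\bigr] = (\varphi^2+\eta^2)\mathbb{L}(\partial_x^\zeta W) + \partial_x^\zeta(\varphi^2)\mathbb{L}(W)$ produces, after the same integration by parts, the principal dissipation $c|\sqrt{\varphi^2+\eta^2}\nabla^2 u|_2^2$ on the LHS plus two new error terms. The key error $\partial_x^\zeta(\varphi^2)\mathbb{L}(W) = 2\varphi\partial_x^\zeta\varphi\cdot a_1L(u)$ is handled by extracting one $\varphi$ and keeping it on $\nabla^2 u$:
\begin{equation*}
\left|\int 2\varphi\,\partial_x^\zeta\varphi\cdot a_1L(u)\cdot\partial_x^\zeta u\right| \lesssim |\nabla\varphi|_\infty|\varphi\nabla^2 u|_2|\nabla u|_2 \leq \varepsilon|\sqrt{\varphi^2+\eta^2}\nabla^2 u|_2^2 + C_\varepsilon c_0^2\|u\|_1^2.
\end{equation*}
The hyperbolic commutator $\sum_j\int\partial_x^\zeta A_j(V)\,\partial_jW\cdot\partial_x^\zeta W$ is bounded by $Cc_3\|W\|_1^2$, while $\partial_x^\zeta[\mathbb{Q}(V)H(\varphi)]$ is treated by the same $\nabla\varphi^2 = 2\varphi\nabla\varphi$ trick together with $|\varphi|_\infty+|\nabla\varphi|_\infty\leq Cc_0$, $\|v\|_3\leq c_3$; the electric piece $a_1\partial_x^\zeta\nabla\Phi$ uses $\|\Phi\|_3\leq Cc_0$.

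Summing the two energy identities, absorbing the small-$\varepsilon$ pieces into the dissipation, and invoking Lemma \ref{pr1} yields a differential inequality of the form
\begin{equation*}
\frac{d}{dt}\|W\|_1^2 + c\,|\sqrt{\varphi^2+\eta^2}\nabla^2 u|_2^2 \leq C(1+c_3^2)\|W\|_1^2 + Cc_0^2(1+c_3^2)
\end{equation*}
on $[0,T_2]$. Integrating in time and applying Gr\"onwall on the interval $T_2\leq(1+Cc_3)^{-3}$ yields $\|W(t)\|_1^2 + \int_0^t|\sqrt{\varphi^2+\eta^2}\nabla^2 u|_2^2\,ds \leq Cc_0^2$. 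The main obstacle is precisely the commutator $\partial_x^\zeta(\varphi^2)\mathbb{L}(W)$ and the corresponding piece of $\partial_x^\zeta[\mathbb{Q}(V)H(\varphi)]$: both create $\nabla\varphi\cdot\nabla^2 u$-type products that are not directly controlled by the weighted dissipation. The decisive observation is that one factor of $\varphi$ is always available from $\nabla\varphi^2 = 2\varphi\nabla\varphi$ and can be glued to $\nabla^2 u$, so that every error is estimated by $|\sqrt{\varphi^2+\eta^2}\nabla^2 u|_2$ with a small-$\varepsilon$ Cauchy--Schwarz, producing an $\eta$-independent bound as needed for the later passage $\eta\to 0$.
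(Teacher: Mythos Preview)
Your proposal is correct and follows essentially the same route as the paper: apply $\partial_x^\zeta$ for $|\zeta|\le 1$ to $\eqref{linear}_2$, pair with $\partial_x^\zeta W$, integrate by parts in the degenerate elliptic term to produce the weighted dissipation, control all commutators by the observation $\nabla\varphi^2=2\varphi\nabla\varphi$ (so one factor of $\varphi$ can be glued to the highest-order $u$-derivative), and close by Gronwall on $[0,T_2]$. The paper merely organizes the right-hand side into seven labeled pieces $\mathrm I_1,\dots,\mathrm I_7$ and records the coefficient $Cc_3^3$ rather than your $C(1+c_3^2)$; either bound closes on $T_2=(1+Cc_3)^{-3}$, so the difference is cosmetic.
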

\begin{proof}
Applying $\partial_x^\zeta$ to $\eqref{linear}_3$, we have 
\begin{equation}\label{10}
\begin{aligned}
&A_0 \partial_x^\zeta W_t+\sum_{j=1}^{3} A_j(V)\partial_j \partial_x^\zeta W+ (\varphi^2+\eta^2) \mathbb{L}(\partial_x^\zeta W)\\
=~& \partial_x^\zeta \mathbb{Q}(V)H(\varphi)-\Big(\partial_x^\zeta\big(\sum_{j=1}^{3}A_j(V)\partial_j W\big)-\sum_{j=1}^{3} A_j(V)\partial_j \partial_x^\zeta W\Big)\\
& -\Big( \partial_x^\zeta\big( (\varphi^2+\eta^2) \mathbb{L}(W) \big)- (\varphi^2+\eta^2) \mathbb{L}(\partial_x^\zeta W) \Big)\\
 &+\Big( \partial_x^\zeta \big(\mathbb{Q}(V)H(\varphi)\big)- \partial_x^\zeta \mathbb{Q}(V)H(\varphi)\Big)+\left(\begin{matrix}
0\\
a_1\D  \partial_x^\zeta \Phi
\end{matrix}\right).
\end{aligned} 
\end{equation}
Multiplying \eqref{10} by $\partial_x^\zeta W$ on both sides and integrating over $\mathbb{T}^3,$
\begin{align}
   &\frac{1}{2} \frac{\dif }{\dif t}\int (\partial_x^\zeta W)^\top A_0 \partial_x^\zeta W+a_1 \alpha|\sqrt{\varphi^2+\eta^2}\D  \partial_x^\zeta u|_2^2 +a_1 (\alpha+\beta) |\sqrt{\varphi^2+\eta^2}\div  \partial_x^\zeta u|_2^2  \nonumber \\
  =~& \frac{1}{2} \int (\partial_x^\zeta W)^\top \div A(V) \partial_x^\zeta W -a_1\alpha\int(\D (\varphi^2+\eta^2)\cdot(\D  \partial_x^\zeta u+\div  \partial_x^\zeta u \mathbb{I}_3) )\cdot \partial_x^\zeta u \nonumber\\
  &+\frac{ a_1\delta}{\delta-1}\int (\D  \varphi^2 \partial_x^\zeta Q(v))\cdot \partial_x^\zeta u \label{13}\\
  &-\int \Big(\partial_x^\zeta\big(\sum_{j=1}^{3}A_j(V)\partial_j W\big)-\sum_{j=1}^{3} A_j(V)\partial_j \partial_x^\zeta W\Big) \cdot \partial_x^\zeta W\nonumber\\
  & -a_1 \int\Big( \partial_x^\zeta\big( (\varphi^2+\eta^2) L(u) \big)- (\varphi^2+\eta^2) L(\partial_x^\zeta u) \Big)\cdot\partial_x^\zeta u\nonumber\\
  &+\frac{ a_1\delta}{\delta-1}\int \Big( \partial_x^\zeta \big(\D  \varphi^2 \cdot  Q(v)\big)- \D  \varphi^2\cdot  Q(\partial_x^\zeta v)\Big)\cdot \partial_x^\zeta u \nonumber\\
  &+ a_1\int \D  \partial_x^\zeta \Phi \cdot \partial_x^\zeta u \nonumber\\
  \triangleq &\sum_{i=1}^{7}\text{I}_i, \nonumber
\end{align}
where $A(V)=(A_1(V),A_2(V),A_3(V))$ and $\div  A(V)=\sum\limits_{j=1}^{3} \partial_j A_j(V)$.

When $|\zeta|\leq 1,$ we have
\begin{align}
\mathrm{I}_1&=\frac{1}{2} \int (\partial_x^\zeta W)^\top \div A(V) \partial_x^\zeta W  \leq C|\D  V|_\infty|\partial_x^\zeta W|_2^2 \nonumber\\
  &\leq C\|V\|_3 |\partial_x^\zeta W|_2^2 \leq C c_3 |\partial_x^\zeta W|_2^2, \nonumber\\
\mathrm{I}_2&=-a_1\alpha\int\big(\D (\varphi^2+\eta^2)\cdot(\D  \partial_x^\zeta u+\div    \partial_x^\zeta  u\mathbb{I}_3) \big)\cdot \partial_x^\zeta u  \nonumber\\
   & \leq C|\D  \varphi|_\infty |\varphi \D  \partial_x^\zeta u|_2|\partial_x^\zeta u|_2  \nonumber\\
   &\leq \frac{a_1 \alpha}{10} |\sqrt{\varphi^2+\eta^2}\D  \partial_x^\zeta u|_2^2+C c_0^2|\partial_x^\zeta u|_2^2, \nonumber\\
\mathrm{I}_3&=\frac{ a_1\delta}{\delta-1}\int (\D  \varphi^2 \partial_x^\zeta Q(v))\cdot \partial_x^\zeta u \nonumber\\
  &\leq C|\varphi|_\infty|\D  \varphi|_\infty|\D  \partial_x^\zeta v|_2|\partial_x^\zeta u|_2 \nonumber\\
  &\leq Cc_3^3|\partial_x^2 u|_2\leq  Cc_3^3|\partial_x^2 u|_2^2+Cc_3^3, \nonumber\\
\mathrm{I}_4&=-\int \Big(\partial_x^\zeta\big(\sum_{j=1}^{3}A_j(V)\partial_j W\big)-\sum_{j=1}^{3} A_j(V)\partial_j \partial_x^\zeta W\Big) \cdot \partial_x^\zeta W \nonumber \\
  &\leq C|\D  V|_\infty |\D  W|_2^2\leq C c_3|\D  W|_2^2, \nonumber \\
\mathrm{I}_5&=-a_1\alpha\int \Big( \partial_x^\zeta\big( (\varphi^2+\eta^2) L(u) \big)- (\varphi^2+\eta^2) L(\partial_x^\zeta u) \Big)\cdot\partial_x^\zeta u \nonumber \\
  &\leq C|\D  \varphi|_\infty|\varphi L(u)|_2|\partial_x^\zeta u|_2\leq \frac{a_1 \alpha}{10} |\sqrt{\varphi^2+\eta^2}\D ^2 u|_2^2+C c_0^2|\partial_x^\zeta u|_2^2, \nonumber \\
\mathrm{I}_{6}&=\frac{ a_1\delta}{\delta-1}\int \Big( \partial_x^\zeta \big(\D  \varphi^2 \cdot  Q(v)\big)- \D  \varphi^2\cdot  Q(\partial_x^\zeta v)\Big)\cdot \partial_x^\zeta u \nonumber \\
&\leq C(|\varphi|_\infty|\D  v|_\infty|\D ^2 \varphi|_2+|\D  v|_\infty|\D  \varphi|_3|\D  \varphi|_6)|\partial_x^\zeta u|_2  \nonumber \\
&\leq Cc_3^3 |\partial_x^\zeta u|_2 \leq Cc_3^3+Cc_3^3|\partial_x^\zeta u|_2^2, \nonumber \\
\mathrm{I}_{7}&=a_1\int \D  \partial_x^\zeta \Phi \cdot \partial_x^\zeta u\leq C \|\Phi\|_2|\partial_x^\zeta u|_2\nonumber \\
&\leq C c_0+Cc_0 |\partial_x^\zeta u|_2^2.
\end{align}

Then, it yields that
\begin{equation}
\frac{1}{2} \frac{\dif }{\dif t}\int (\partial_x^\zeta W)^\top A_0 \partial_x^\zeta W+ \frac{1}{2} a_1\alpha |\sqrt{\varphi^2+\eta^2}\D ^2 u|_2^2\leq Cc_3^3\|W\|_1^2+Cc_3^3.
\end{equation}

By Gronwall's inequality, we have 
\begin{equation}
\begin{split}
  \|W\|_1^2+ \int_0^t |\sqrt{\varphi^2+\eta^2}\D ^2 u|_2^2\dif s \leq & ~ C(\|W_0\|_1^2+c_3^3 t) \exp(Cc_3^3 t)\leq C c_0^2,
  \end{split}
\end{equation}
for $0\leq t\leq T_2=\min\{T_1,(1+Cc_3)^{-3}\}$.
\end{proof}

\begin{lemma}
Let $(\varphi,W, \Phi)$ be the solutions to \eqref{linear}.  Then 
\begin{equation*}
\begin{aligned}
&|W(t)|_{D^2}^2+\int_0^t |\sqrt{\varphi^2+\eta^2}\D ^3 u|_2^2 \dif s  \leq C c_0^2,\\
&\|\phi_t(t)\|_1+|u_t(t)|_2+\int_0^t|u_s|_{D^1}^2 \dif s\leq Cc_3^3,
\end{aligned}
\end{equation*}
for $0 \leq t \leq T_2$.
\end{lemma}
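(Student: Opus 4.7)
The plan is to mirror the energy argument of the preceding lemma, but at one derivative higher. First I apply $\partial_x^\zeta$ with $|\zeta|=2$ to $\eqref{linear}_3$, multiply both sides by $\partial_x^\zeta W$, and integrate over $\mathbb{T}^3$. The quasi-symmetric structure of $A_0$ and $A_j(V)$ makes the hyperbolic block antisymmetric up to a $\div A(V)$ remainder, while integration by parts on $(\varphi^2+\eta^2)\mathbb{L}(\partial_x^\zeta W)$ produces the degenerate dissipation
\begin{equation*}
a_1\alpha|\sqrt{\varphi^2+\eta^2}\D^3 u|_2^2+a_1(\alpha+\beta)|\sqrt{\varphi^2+\eta^2}\D^2\div u|_2^2
\end{equation*}
together with a cross term of the form $-a_1\alpha\int \D(\varphi^2)\cdot(\D\partial_x^\zeta u+\div\partial_x^\zeta u\,\mathbb{I}_3)\cdot\partial_x^\zeta u$. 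This yields an energy identity with seven error terms $\mathrm{I}_1,\dots,\mathrm{I}_7$ of the same shape as in \eqref{13}.

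Each $\mathrm{I}_i$ is then estimated by the Moser-type product and commutator inequalities of Lemmas~\ref{Lem:2.2} and \ref{Lem:2.5}, Sobolev embedding $H^2\hookrightarrow L^\infty$, and the bounds $\|\varphi\|_3\leq Cc_0$, $\|\Phi\|_4\leq Cc_0$ from Lemma~\ref{pr1}, together with the hypothesis $\|V\|_3\leq c_3$ from \eqref{26}. Most contributions reduce, after Cauchy--Schwarz, to one of the schematic forms $Cc_3^3\|W\|_2^2$ or $Cc_3^3$; the Poisson source $a_1\D\partial_x^\zeta\Phi$ is controlled by $C\|\Phi\|_3|\partial_x^\zeta u|_2\leq C+C\|W\|_2^2$.

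The main obstacle is the commutator
\begin{equation*}
\mathrm{I}_5=-a_1\alpha\int\bigl[\partial_x^\zeta\bigl((\varphi^2+\eta^2)L(u)\bigr)-(\varphi^2+\eta^2)L(\partial_x^\zeta u)\bigr]\cdot\partial_x^\zeta u,
\end{equation*}
which at $|\zeta|=2$ carries a borderline piece of shape $\D\varphi\cdot\varphi\cdot\D^3 u$. Writing $\varphi\D^3 u$ in terms of $\sqrt{\varphi^2+\eta^2}\,\D^3 u$ and applying Cauchy--Schwarz with $|\D\varphi|_\infty\leq C\|\varphi\|_3\leq Cc_0$ gives
\begin{equation*}
|\mathrm{I}_5|\leq C|\D\varphi|_\infty|\sqrt{\varphi^2+\eta^2}\D^3 u|_2|\D^2 u|_2+\text{(lower order)}\leq \tfrac{a_1\alpha}{10}|\sqrt{\varphi^2+\eta^2}\D^3 u|_2^2+Cc_0^2|\D^2 u|_2^2+Cc_3^3,
\end{equation*}
so the first piece is absorbed into the LHS dissipation and the rest feeds Gronwall. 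The analogous commutator $\mathrm{I}_6$ arising from $\D\varphi^2\cdot Q(v)$ is handled by the same trick through the $H^3$-norms of $\varphi$ and $v$. Gronwall on $[0,T_2]$, combined with $T_2\leq (1+Cc_3)^{-3}$, then closes $|W|_{D^2}^2+\int_0^t|\sqrt{\varphi^2+\eta^2}\D^3 u|_2^2\,\dif s\leq Cc_0^2$.

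For the time-derivative estimates I read $\phi_t$ and $u_t$ directly off $\eqref{linear}_3$:
\begin{equation*}
\phi_t=-v\cdot\D\phi-\tfrac{\gamma-1}{2}\tilde\phi\div u,\qquad u_t=-v\cdot\D u-\tfrac{\gamma-1}{2a_1}\tilde\phi\D\phi-(\varphi^2+\eta^2)L(u)+\tfrac{\delta}{\delta-1}Q(v)\D\varphi^2+\D\Phi.
\end{equation*}
The bound on $\|\phi_t\|_1$ is then immediate from Lemma~\ref{Lem:2.3} using $\|\phi\|_2$, $\|\tilde\phi\|_3$, $\|v\|_3$; the bound on $|u_t|_2$ follows from $|(\varphi^2+\eta^2)L(u)|_2\leq (|\varphi|_\infty^2+\eta^2)|\D^2 u|_2\leq Cc_3^3$ together with similar control of the remaining terms. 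For $\int_0^t|u_s|_{D^1}^2\,\dif s$ I apply $\D$ to the $u_t$ identity; the only singular contribution is $|\D((\varphi^2+\eta^2)L(u))|_2\leq C|\D\varphi|_\infty|\varphi\D^2 u|_2+C|\varphi|_\infty|\sqrt{\varphi^2+\eta^2}\D^3 u|_2+\cdots$, which is square-integrable in time precisely because of the weighted $L^2_tL^2_x$ bound established in the first part of the lemma.
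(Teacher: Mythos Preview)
Your proposal is correct and follows essentially the same approach as the paper: the same energy identity \eqref{13} at $|\zeta|=2$, the same absorption of the borderline commutator piece $\D\varphi\cdot\varphi\,\D^3 u$ in $\mathrm{I}_5$ into the dissipation via Young's inequality with $|\D\varphi|_\infty\leq Cc_0$, the same Gronwall closure on $[0,T_2]$, and the same direct reading of $\phi_t,u_t$ from $\eqref{linear}_3$ with the weighted $L^2_tL^2_x$ bound on $\sqrt{\varphi^2+\eta^2}\,\D^3 u$ controlling $\int_0^t|u_s|_{D^1}^2\,\dif s$. The only cosmetic difference is that the paper writes out each H\"older pairing explicitly rather than invoking Lemmas~\ref{Lem:2.2}--\ref{Lem:2.5} schematically, and for $\mathrm{I}_3$ the paper uses the specific pairing $|\varphi|_\infty|\D\varphi|_3|\D^3 v|_6|\D^2 u|_2$ rather than integrating by parts.
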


\begin{proof}
Now we consider the terms on the righthand side of \eqref{13} when $|\zeta|=2$. It follows from  Lemma \ref{GN}, \eqref{13},  H\"older's inequality,  Young's inequality and the integration by parts that 
\begin{align}
\mathrm{I}_1&=\frac{1}{2} \int (\partial_x^\zeta W)^\top \div A(V) \partial_x^\zeta W  \leq C|\D  V|_\infty|\partial_x^\zeta W|_2^2 \nonumber\\
  &\leq C\|V\|_3 |\partial_x^\zeta W|_2^2 \leq C c_3 |\partial_x^\zeta W|_2^2, \nonumber\\
\mathrm{I}_2&=-a_1\alpha\int\big(\D (\varphi^2+\eta^2)\cdot(\D  \partial_x^\zeta u+\div    \partial_x^\zeta  u\mathbb{I}_3) \big)\cdot \partial_x^\zeta u  \\
   & \leq C|\D  \varphi|_\infty |\varphi \D  \partial_x^\zeta u|_2|\partial_x^\zeta u|_2  \leq \frac{a_1 \alpha}{10} |\sqrt{\varphi^2+\eta^2}\D  \partial_x^\zeta u|_2^2+C c_0^2|\partial_x^\zeta u|_2^2, \nonumber\\
\mathrm{I}_3&=\frac{ a_1\delta}{\delta-1}\int (\D  \varphi^2 \partial_x^\zeta Q(v))\cdot \partial_x^\zeta u \nonumber\\
  &\leq C|\varphi|_\infty|\D  \varphi|_3|\D  \partial_x^\zeta v|_6|\partial_x^\zeta u|_2 \nonumber\\
  &\leq Cc_3^3|\partial_x^\zeta u|_2\leq  Cc_3^3|\partial_x^\zeta u|_2^2+Cc_3^3, \nonumber\\
\mathrm{I}_4&=-\int \Big(\partial_x^\zeta\big(\sum_{j=1}^{3}A_j(V)\partial_j W\big)-\sum_{j=1}^{3} A_j(V)\partial_j \partial_x^\zeta W\Big) \cdot \partial_x^\zeta W \nonumber \\
  &\leq C|\D  V|_\infty |\D  W|_2^2\leq C c_3|\D  W|_2^2, \nonumber \\
\mathrm{I}_5&=-a_1\int \Big( \partial_x^\zeta\big( (\varphi^2+\eta^2) L(u) \big)- (\varphi^2+\eta^2) L(\partial_x^\zeta u) \Big)\cdot\partial_x^\zeta u \nonumber \\
  &\leq C(|\D  \varphi|_\infty^2|\D ^2 u|_2^2+|\D  \varphi|_\infty|\varphi \D ^3 u|_2|\D ^2 u|_2) \nonumber\\ 
  &\leq \frac{a_1 \alpha}{10} |\sqrt{\varphi^2+\eta^2}\D ^3 u|_2^2+C c_0^2|\D ^2 u|_2^2, \nonumber \\
\mathrm{I}_{6}&=\frac{ a_1\delta}{\delta-1}\int \Big( \partial_x^\zeta \big(\D  \varphi^2 \cdot  Q(v)\big)- \D  \varphi^2\cdot  Q(\partial_x^\zeta v)\Big)\cdot \partial_x^\zeta u \nonumber \\
&\leq C(|\varphi|_\infty|\D ^2 v|_6|\D ^2 \varphi|_3+|\D \varphi|_\infty^2 |\D ^2 v|_2+|\D  \varphi|_\infty|\D ^2 \varphi|_3|\D  v|_6 \nonumber\\
&\quad +|\varphi|_\infty|\D ^3 \varphi|_2|\D  v|_\infty+|\D  \varphi|_\infty|\D  v|_\infty|\D ^2 \varphi|_2)|\D ^2 u|_2  \nonumber \\
&\leq Cc_3^3 |\D ^2 u|_2 \leq Cc_3^3+Cc_3^3|\D ^2 u|_2^2, \nonumber \\
\mathrm{I}_{7}&=a_1\int \D  \partial_x^\zeta \Phi \cdot \partial_x^\zeta u\leq C \|\Phi\|_3|\partial_x^\zeta u|_2\nonumber \\
&\leq C c_0+Cc_0 |\partial_x^\zeta u|_2^2.
\end{align}

Then, it yields that
\begin{equation}
\frac{1}{2} \frac{\dif }{\dif t}\int (\partial_x^\zeta W)^\top A_0 \partial_x^\zeta W+ \frac{1}{2} a_1\alpha |\sqrt{\varphi^2+\eta^2}\D ^3 u|_2^2\leq Cc_3^3|W|_{D^2}^2+Cc_3^3.
\end{equation}

According to Gronwall's inequality, we have 
\begin{equation}
|W|_{D^2}^2+ \int_0^t |\sqrt{\varphi^2+\eta^2}\D ^3 u|_2^2\dif s\leq C(|W_0|_{D^2}^2+c_3^3 t) \exp(Cc_3^3 t)\leq C c_0^2,
\end{equation}
for $0\leq t\leq T_2$.

From the relation that
\begin{equation}\label{14}
\phi_t=-v\cdot \D  \phi-\frac{\gamma-1}{2}\tilde{\phi}\div  u,
\end{equation}
we have
\begin{equation}
\begin{split}
|\phi_t|_2&\leq C|v\cdot \D  \phi+\tilde{\phi}\div  u|_2\\
&\leq C(|v|_\infty|\D  \phi|_2+|\tilde{\phi}|_\infty|\div  u|_2)\\
&\leq Cc_2^2,\\
|\phi_t|_{D^1}&\leq C(|\D  v|_\infty|\D  \phi|_2+|v|_\infty|\D ^2 \phi|_2+|\D  \tilde{\phi}|_\infty|\D  u|_2+|\tilde{\phi}|_\infty|\D ^2 u|_2)\\
&\leq Cc_3^2.
\end{split}
\end{equation}

From the relation that
\begin{equation}\label{15}
\begin{split}
&u_t+v\cdot \D  u+\frac{2A\gamma}{\gamma-1}\tilde{\phi}\D  \phi-(\varphi^2+\eta^2)(\alpha\Delta u+(\alpha+\beta)\D  \div u) \\
=~&\frac{\delta}{\delta-1}(\alpha(\D  v+(\D  v)^\top)+\beta \div v\mathbb{I}_3)\cdot \D \varphi^2+\D  \Phi,
\end{split}
\end{equation}
we have
\begin{equation}
\begin{split}
|u_t|_2\leq~& C (|v|_\infty|\D  u|_2+|\tilde{\phi}|_\infty|\D  \phi|_2+|\varphi^2+\eta^2|_\infty |\D ^2 u |_2+|\varphi|_\infty|\D  \varphi|_\infty|\D  v|_2+|\D  \Phi|_2)\\
 \leq~ & Cc_2^3.
\end{split}
\end{equation}

For $|u_t|_{D^1}$,
\begin{equation}\label{16}
\begin{split}
|u_t|_{D^1}\leq~ &C(|\D  v|_6 |\D  u|_3+|v|_\infty|\D ^2 u|_2+|\D  \tilde{\phi}|_3|\D  \phi|_6+|\tilde{\phi}|_\infty|\D ^2 \phi|_2\\
&+|\sqrt{\varphi^2+\eta^2}|_\infty|\sqrt{\varphi^2+\eta^2}\D ^3 u|_2+|\varphi|_\infty|\D  \varphi|_\infty|\D ^2 u|_2\\
&+|\D  \varphi|_\infty^2|\D  v|_2+|\varphi|_\infty|\D ^2 \varphi|_3|\D  v|_6+|\varphi|_\infty|\D  \varphi|_\infty|\D ^2 v|_2+|\D ^2 \Phi|_2)\\
\leq~ & Cc_2^3+Cc_0|\sqrt{\varphi^2+\eta^2}\D ^3 u|_2,
\end{split}
\end{equation}
which implies that
\begin{equation}
\int_{0}^{t} |u_s|_{D^1}^2 \dif s\leq C\int_{0}^{t}(c_2^6+c_0^2|\sqrt{\varphi^2+\eta^2}\D ^3 u|_2^2)\dif s\leq Cc_2^3,
\end{equation}
for $0\leq t\leq T_2.$
\end{proof}

\begin{lemma}\label{17}
Let $(\varphi,W,\Phi)$ be the solutions to \eqref{linear}.  Then 
\begin{equation*}
\begin{aligned}
&|W(t)|_{D^3}^2+\int_0^t |\sqrt{\varphi^2+\eta^2}\nabla^4 u|_2^2 \mathrm{d}s   \leq C c_0^2,\\
&|\phi_t(t)|_{D^2}^2+|u_t(t)|_{D^1}^2+\int_0^t|u_s|_{D^2}^2 \text{d}s\leq Cc_3^6,
\end{aligned}
\end{equation*}
for $0 \leq t \leq T_3\triangleq \min\{T_2, (1+Cc_3)^{-4}\}$.
\end{lemma}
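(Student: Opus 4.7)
The proof will follow the same energy-method template as Lemmas 3.4 and 3.5, simply pushed up to the top-order derivative. Specifically, I would apply $\partial_x^\zeta$ with $|\zeta|=3$ to the system $\eqref{linear}_3$, multiply the resulting identity by $\partial_x^\zeta W$, and integrate over $\mathbb{T}^3$. This produces exactly the same seven integrals $\mathrm{I}_1,\ldots,\mathrm{I}_7$ that appeared in the previous lemma, plus the good term $\tfrac12\frac{\dif}{\dif t}\int(\partial_x^\zeta W)^\top A_0\,\partial_x^\zeta W + a_1\alpha|\sqrt{\varphi^2+\eta^2}\,\nabla\partial_x^\zeta u|_2^2 + a_1(\alpha+\beta)|\sqrt{\varphi^2+\eta^2}\,\div\partial_x^\zeta u|_2^2$ on the left. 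The goal is to bound each $\mathrm{I}_i$ by either $C c_3^6\|W\|_{D^3}^2+C c_3^6$ or by a small fraction of $|\sqrt{\varphi^2+\eta^2}\,\nabla^4 u|_2^2$ that can be absorbed into the dissipation on the left.

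For the hyperbolic pieces $\mathrm{I}_1$ and $\mathrm{I}_4$, the commutator estimates of Lemma 2.2 together with $\|V\|_3\le c_3$ will give bounds of the form $Cc_3\|W\|_{D^3}^2$, essentially as before. For $\mathrm{I}_3$ and $\mathrm{I}_6$ (the $\mathbb{Q}(V)H(\varphi)$ source and its commutator), I will split each $\partial^3(\nabla\varphi^2 \cdot Q(v))$ using Leibniz and control the resulting terms by H\"older's inequality combined with Gagliardo–Nirenberg (Lemma 2.1); the worst pieces are of the form $|\varphi|_\infty|\nabla\varphi|_\infty|\nabla^3 v|_2$ and $|\nabla^3\varphi|_2|\nabla v|_\infty|\varphi|_\infty$, each bounded by $Cc_3^3$ thanks to the $\|\varphi\|_3\le Cc_0$ estimate of Lemma 3.3. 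The electrostatic term $\mathrm{I}_7$ is harmless because standard elliptic regularity together with Lemma 3.3 gives $\|\Phi\|_4\le Cc_0$.

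The main obstacle — as at every order — is the commutator $\mathrm{I}_5 = -a_1\!\int(\partial_x^\zeta((\varphi^2+\eta^2)L(u)) - (\varphi^2+\eta^2)L(\partial_x^\zeta u))\cdot\partial_x^\zeta u$. Expanding by Leibniz produces terms involving $\nabla^k(\varphi^2)\cdot\nabla^{5-k}u$ for $k=1,2,3$; after integration by parts, these can be rewritten so that at most one $\nabla^4 u$ appears, paired with one copy of $\varphi$. The key identity is the elementary pointwise bound $|\nabla\varphi^2|\le 2|\varphi||\nabla\varphi|\le C|\sqrt{\varphi^2+\eta^2}|\cdot|\nabla\varphi|$, which lets me write such terms as $|\sqrt{\varphi^2+\eta^2}\nabla^4 u|_2 \times (\text{lower order})$; by Cauchy's inequality I absorb $\frac{a_1\alpha}{10}|\sqrt{\varphi^2+\eta^2}\nabla^4 u|_2^2$ into the left-hand dissipation and leave a remainder of the form $Cc_0^2|\nabla^3 u|_2^2 + Cc_3^6$. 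The remaining cubic terms like $|\nabla\varphi|_\infty|\nabla^3\varphi|_2|\nabla^2 u|_\infty$ are handled by Gagliardo–Nirenberg in terms of $\|\varphi\|_3$ and $\|u\|_3$ together with the $\sqrt{\varphi^2+\eta^2}\nabla^3 u$ control from the previous lemma. Putting everything together yields a differential inequality of the form
\begin{equation*}
\frac{\dif}{\dif t}\|W\|_{D^3}^2 + a_1\alpha|\sqrt{\varphi^2+\eta^2}\,\nabla^4 u|_2^2 \le Cc_3^6\,\|W\|_{D^3}^2 + Cc_3^6,
\end{equation*}
and Gronwall on $[0,T_3]$ with $T_3=\min\{T_2,(1+Cc_3)^{-4}\}$ gives the desired bound $\|W\|_{D^3}^2+\int_0^t|\sqrt{\varphi^2+\eta^2}\nabla^4 u|_2^2\dif s \le Cc_0^2$.

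For the two time-derivative bounds I will simply read them off the evolution equations. Applying $\partial_x^\zeta$ with $|\zeta|=2$ to $\phi_t = -v\cdot\nabla\phi-\tfrac{\gamma-1}{2}\tilde\phi\,\div u$ and using Lemmas 2.3–2.5 together with the already-established $\|v\|_3,\|\tilde\phi\|_3,\|\phi\|_3,\|u\|_3\le Cc_3$ yields $|\phi_t|_{D^2}\le Cc_3^3$. For $u_t$, I rearrange $\eqref{linear}_3$ to isolate $u_t$ and differentiate once; the delicate viscous term $(\varphi^2+\eta^2)(\alpha\Delta u+(\alpha+\beta)\nabla\div u)$ contributes $|\sqrt{\varphi^2+\eta^2}|_\infty\cdot|\sqrt{\varphi^2+\eta^2}\nabla^4 u|_2 + |\nabla\varphi|_\infty|\varphi|_\infty|\nabla^3 u|_2$, both acceptable thanks to the spatial bound just proved and to $\|\Phi\|_4\le Cc_0$. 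This gives a pointwise-in-time bound $|u_t|_{D^1}\le Cc_3^3+Cc_0|\sqrt{\varphi^2+\eta^2}\nabla^4 u|_2$, and squaring and integrating in time produces $\int_0^t|u_s|_{D^2}^2\dif s\le Cc_3^6$ on $[0,T_3]$, completing the lemma.
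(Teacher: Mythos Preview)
Your overall plan matches the paper's, but there is a real gap in your treatment of $\mathrm I_3$ (and, symmetrically, of $\mathrm I_6$). With $|\zeta|=3$ the term $\mathrm I_3=\frac{a_1\delta}{\delta-1}\int\nabla\varphi^2\cdot\partial_x^\zeta Q(v)\cdot\partial_x^\zeta u$ contains $\nabla^4 v$, not $\nabla^3 v$: $Q(v)$ already carries one derivative, so $\partial_x^\zeta Q(v)$ is fourth order. Your claimed ``worst piece'' $|\varphi|_\infty|\nabla\varphi|_\infty|\nabla^3 v|_2$ is therefore one order too low. The only available control on fourth derivatives of $v$ is $\tilde\varphi\,\nabla^4 v\in L^2_tL^2_x$ from \eqref{26}, which carries the \emph{different} weight $\tilde\varphi$, so a direct H\"older bound on $\int 2\varphi\nabla\varphi\cdot\nabla^4 v\cdot\nabla^3 u$ fails. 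The paper resolves this by integrating by parts to move one derivative off $v$ onto $\varphi\nabla\varphi\cdot\nabla^3 u$, producing pieces like $|\nabla\varphi|_\infty|\varphi\nabla^4 u|_2|\nabla^3 v|_2$ and $|\nabla^2\varphi|_3|\varphi\nabla^3 u|_6|\nabla^3 v|_2$; the first is then absorbed into the dissipation. The same maneuver is needed in $\mathrm I_6$, where one Leibniz piece of the commutator is $\nabla^4(\varphi^2)\cdot Q(v)\cdot\nabla^3 u$ and $\varphi$ is only in $H^3$; the paper again integrates by parts, yielding terms such as $|\nabla^3\varphi|_2|\nabla v|_\infty|\varphi\nabla^4 u|_2$. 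Ironically, $\mathrm I_5$---where you \emph{do} invoke integration by parts---does not need it: a plain Leibniz expansion already pairs every $\nabla^4 u$ with a factor $\varphi$.

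Two smaller points. First, the differential inequality you state, with coefficient $Cc_3^6$ in front of $\|W\|_{D^3}^2$, would not close on $T_3=(1+Cc_3)^{-4}$: Gronwall would produce $\exp(Cc_3^6T_3)\sim\exp(Cc_3^2)$. The paper's bookkeeping gives $Cc_3^3|W|_{D^3}^2+Cc_3^4$, for which this $T_3$ suffices. Second, in your last paragraph the bound you write for $|u_t|_{D^1}$ invokes $|\sqrt{\varphi^2+\eta^2}\,\nabla^4 u|_2$, but $\nabla u_t$ costs only $\nabla^3 u$ from the viscous term (this is exactly \eqref{16} from the previous lemma); it is the estimate for $|u_t|_{D^2}$ that brings in $|\sqrt{\varphi^2+\eta^2}\,\nabla^4 u|_2$ and must be squared and integrated in time.
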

\begin{proof}
Now we consider the terms on the righthand side of \eqref{13} when $|\zeta|=3$. It follows from  Lemma \ref{GN}, \eqref{13},  H\"older's inequality and Young's inequality that 
\begin{align}
\mathrm{I}_1&=\frac{1}{2} \int (\partial_x^\zeta W)^\top \text{div}A(V) \partial_x^\zeta W  \leq C|\nabla V|_\infty|\partial_x^\zeta W|_2^2 \nonumber\\
  &\leq C\|V\|_3 |\partial_x^\zeta W|_2^2 \leq C c_3 |\partial_x^\zeta W|_2^2, \nonumber\\
\mathrm{I}_2&=-a_1\alpha\int\big(\nabla(\varphi^2+\eta^2)\cdot(\nabla \partial_x^\zeta u+\text{div}   \partial_x^\zeta  u\mathbb{I}_3) \big)\cdot \partial_x^\zeta u  \\
   & \leq C|\nabla \varphi|_\infty |\varphi \nabla^4 u|_2|\partial_x^\zeta u|_2 \leq \frac{a_1 \alpha}{10} |\sqrt{\varphi^2+\eta^2}\nabla^4 u|_2^2+C c_0^2|\partial_x^\zeta u|_2^2, \nonumber\\
\mathrm{I}_3&=\frac{ a_1\delta}{\delta-1}\int (\nabla \varphi^2 \partial_x^\zeta Q(v))\cdot \partial_x^\zeta u \nonumber\\
  &\leq C(|\nabla \varphi|_\infty^2|\nabla^3 v|_2|\nabla^3 u|_2+|\nabla^2 \varphi|_3|\varphi \nabla^3 u|_6|\nabla ^3 v|_2+|\nabla \varphi|_\infty |\varphi \nabla^4 u|_2|\nabla^3 v|_2) \nonumber\\
  &\leq Cc_3^3|\nabla^3 u|_2^2+Cc_3^4+\frac{a_1 \alpha}{10} |\sqrt{\varphi^2+\eta^2}\nabla^4 u|_2^2, \nonumber\\
\mathrm{I}_4&=-\int \Big(\partial_x^\zeta\big(\sum_{j=1}^{3}A_j(V)\partial_j W\big)-\sum_{j=1}^{3} A_j(V)\partial_j \partial_x^\zeta W\Big) \cdot \partial_x^\zeta W \nonumber \\
  &\leq C(|\nabla V|_\infty|\nabla^3 W|_2^2+|\nabla^2 V|_3|\nabla^2 W|_6|\nabla^3 W|_2+|\nabla ^3 V|_2|\nabla W |_\infty|\nabla W|_2) \nonumber \\
  &\leq Cc_3|\nabla^3 W|_2^2, \nonumber\\
\mathrm{I}_5&=-a_1\int \Big( \partial_x^\zeta\big( (\varphi^2+\eta^2) L(u) \big)- (\varphi^2+\eta^2) L(\partial_x^\zeta u) \Big)\cdot\partial_x^\zeta u \nonumber \\
  &\leq C(|\nabla \varphi|_\infty|\nabla^2 \varphi|_3|\nabla^2 u|_6|\nabla^3 u|_2+|\nabla^3\varphi|_2|\nabla^2 u|_3|\varphi\nabla^3 u|_6+|\nabla \varphi|_\infty^2|\nabla^3 u|_2^2 \nonumber\\ 
  & \quad + |\nabla^2\varphi|_3|\varphi \nabla^3 u|_6|\nabla^3 u|_2+|\nabla\varphi|_\infty|\varphi \nabla^4 u|_2|\nabla^3 u|_2)       \nonumber\\ 
  &\leq \frac{a_1 \alpha}{10} |\sqrt{\varphi^2+\eta^2}\nabla^4 u|_2^2+C c_0^2|\nabla^3 u|_2^2, \nonumber \\
\mathrm{I}_{6}&=\frac{ a_1\delta}{\delta-1}\int \Big( \partial_x^\zeta \big(\nabla \varphi^2 \cdot  Q(v)\big)- \nabla \varphi^2\cdot  Q(\partial_x^\zeta v)\Big)\cdot \partial_x^\zeta u \nonumber \\
&\leq C(|\nabla v|_\infty|\nabla^2\varphi|_3|\nabla^2\varphi|_6|\nabla^3 u|_2+|\nabla v|_\infty|\nabla\varphi|_\infty|\nabla^3\varphi|_2|\nabla^3 u|_2+|\nabla^3 \varphi|_2|\nabla^2 v|_6|\varphi\nabla^3 u|_3 \nonumber\\
&\quad +|\nabla^3 \varphi|_2|\nabla v|_\infty|\varphi\nabla^4 u|_2+|\nabla \varphi|_\infty|\nabla^2\varphi|_3|\nabla^2 v|_6|\nabla^3 u|_2+|\varphi|_3|\nabla^2v|_6|\nabla^3 \varphi|_2|\nabla^3 u|_2   \nonumber \\
&\quad +|\nabla \varphi|_\infty^2|\nabla^3 v|_2|\nabla^3 u|_2+|\nabla^3 v|_2|\nabla^2 \varphi|_3|\varphi\nabla^3 u|_6) \nonumber \\
&\leq Cc_3^3 |\nabla^3 u|_2^2 +Cc_3^4+\frac{a_1\alpha}{10}|\sqrt{\varphi^2+\eta^2}\nabla^4 u|_2^2, \nonumber \\
\mathrm{I}_{7}&=a_1\int \D  \partial_x^\zeta \Phi \cdot \partial_x^\zeta u\leq C \|\Phi\|_4|\nabla^3 u|_2\nonumber \\
&\leq C c_0+Cc_0 |\nabla^3 u|_2^2
\end{align}
where $\mathrm{I}_6$ was processed via integration by parts.
Then, it yields that
\begin{equation}
\frac{1}{2} \frac{\mathrm{d}}{\mathrm{d}t}\int (\partial_x^\zeta W)^\top A_0 \partial_x^\zeta W+ \frac{1}{2} a_1\alpha |\sqrt{\varphi^2+\eta^2}\nabla^4 u|_2^2\leq Cc_3^3|W|_{D^3}^2+Cc_3^4.
\end{equation}

According to Gronwall's inequality, we have 
\begin{equation}
|W|_{D^3}^2+ \int_0^t |\sqrt{\varphi^2+\eta^2}\nabla^4 u|_2^2\mathrm{d}s\leq C(|W_0|_{D^2}^2+c_3^4 t) \exp(Cc_3^3 t)\leq C c_0^2,
\end{equation}
for $0\leq t\leq T_3=\min\{T_1,(1+Cc_3)^{-4}\}$.

For $|\phi_t|_{D^2}$, from \eqref{14}, we have
\begin{equation}
\begin{split}
|\phi_t|_{D^2}\leq~ & C(|\nabla^2 v|_3|\nabla \phi|_6+|\nabla v|_3|\nabla^2\phi|_6+|v|_\infty|\nabla^3 \phi|_2\\
&+|\nabla^2\tilde{\phi}|_3|\nabla u|_6+|\nabla\tilde{\phi}|_3|\nabla^2 u|_6+|\tilde{\phi}|_\infty|\nabla^3 u|_2)\\
\leq~ &Cc_3^2.
\end{split}
\end{equation}

It follows from \eqref{16} that
\begin{equation}
\begin{split}
|u_t|_{D^1}\leq~ & Cc_2^3+Cc_0|\sqrt{\varphi^2+\eta^2}\nabla^3 u|_2\leq  Cc_2^3.
\end{split}
\end{equation}

For $|u_t|_{D^2}$, from \eqref{15}, we have
\begin{equation}
\begin{split}
|u_t|_{D^2}\leq~ & C(\|v\|_3\|u\|_3+\|\tilde{\phi}\|_3\|\phi\|_3+\|\varphi\|_3\|u\|_3+\|\varphi\|_3^2\|u\|_3
+|\varphi|_\infty|\sqrt{\varphi^2+\eta^2}\nabla^4 u|_2)\\
\leq~ & Cc_3^3+Cc_3|\sqrt{\varphi^2+\eta^2}\nabla^4 u|_2,
\end{split}
\end{equation}
which implies that 
\begin{equation}
\begin{split}
\int_{0}^{t} |u_t|_{D^2}^2 \mathrm{d}s\leq &C\int_{0}^{t} \Big(c_3^6+c_3^2(|\sqrt{\varphi^2+\eta^2}\nabla^4 u|_2^2)\Big)\dif s
\leq  Cc_3^4,
\end{split}
\end{equation}
for $0\leq t\leq T_3.$
\end{proof}

Combining the estimates obtained in Lemmas \ref{pr1}--\ref{17}, we have 
\begin{equation}
\begin{split}
&\|\varphi\|_3^2\leq Cc_0^2,\quad \|\varphi_t\|_2\leq Cc_3^2,\quad \|\Phi(t)\|_4\leq Cc_0,\\
& \|W\|_1^2+\int_{0}^{t}|\varphi\nabla^2 u|_2^2\mathrm{d}s  \leq Cc_0^2,\\
& |W|_{D^2}^2+\int_{0}^{t}|\varphi\nabla^3 u|_2^2\mathrm{d}s  \leq Cc_0^2,\\
&\|\phi_t\|_1^2+|u_t|_2^2+\int_{0}^{t}|u_t|_{D^1}^2 \mathrm{d}s \leq Cc_3^6,\\
&|W|_{D^3}^2+\int_{0}^{t}|\varphi\nabla^4 u|_2^2\mathrm{d}s  \leq Cc_0^2,\\
&\|\phi_t\|_{D^2}^2+|u_t|_{D^1}^2+\int_{0}^{t}|u_t|_{D^2}^2 \mathrm{d}s \leq Cc_3^6,
\end{split}
\end{equation}
for $0\leq t\leq T_3=\min\{T_2,(1+Cc_3)^{-4}\}.$

Therefore, if we define the constants $c_i(i=1,2,3)$ and $T^*$ by 
\begin{equation}
c_1=c_2=c_3=C^{\frac{1}{2}}c_0, \quad T^*=\min\{T,(1+Cc_0)^{-4}\},
\end{equation}
then we deduce that 
\begin{equation}\label{20}
\begin{split}
&\sup_{0 \leq t\leq T^*} \left(\|\varphi(t)\|_1^2  +\|\phi(t)\|_1^2+\|u(t)\|_1^2\right)+\int_{0}^{T^*}|\varphi \nabla^2 u|_2^2 \mathrm{d}t \leq c_1^2,\\
&\sup_{0 \leq t\leq T^*}  \left(|\varphi(t)|_{D^2}^2 +|\phi(t)|_{D^2}^2+\|u(t)\|_{D^2}^2\right)+\int_{0}^{T^*}|\varphi \nabla^3 u|_2^2 \mathrm{d}t\leq c_2^2,\\
&\sup_{0 \leq t\leq T^*} \left( |\varphi(t)|_{D^3}^2  +|\phi(t)|_{D^3}^2+|u(t)|_{D^3}^2+\|\Phi\|_4\right)+\int_{0}^{T^*}|\varphi \nabla^4 u|_2^2 \mathrm{d}t\leq c_3^2,\\
&\sup_{0 \leq t\leq T^*} \left( \|u_t\|_1^2+\|\phi_t\|_2^2+\|\varphi_t\|_2^2 \right)+\int_{0}^{T^*}|u_t|_{D^2}^2\mathrm{d}t\leq c_3^6.
\end{split}
\end{equation}

\subsection{Passing to the limit as $\eta\rightarrow 0 $}\label{sectionlim}
Now we consider the  systems \eqref{linear} when $\eta\rightarrow 0$ as follows:
\begin{equation}\label{19}
\begin{cases}
&\varphi_t+v\cdot \D  \varphi+\frac{\delta-1}{2} \tilde{\varphi}  \div v=0,\\
&A_0 W_t+ \sum_{j=1}^{3}A_j(V)\partial_j W+(\varphi^2+\eta^2)  \mathbb{L}(W)= \mathbb{Q}(V)H(\varphi)+\left(\begin{matrix}
0\\
a_1\D  \Phi
\end{matrix}\right)   , \\
&\Delta\Phi=\varphi^{\frac{2}{\delta-1}}-b,\\
&(\varphi, W,\Phi)|_{t=0}=\left(\varphi_0(x), W_0(x),\Phi_0(x)\right), \quad x \in \mathbb{T}^3, 
\end{cases}
\end{equation}

\begin{lemma} \label{32} Assume $\left(\varphi_0,W_0\right)$ satisfy \eqref{18}. Then there exists a time $T^*>0$  and a unique strong solution $(\varphi,W, \Phi)$ in $\left[0, T^*\right] \times \mathbb{T}^3$ to \eqref{19} such that
\begin{equation}\label{48}
\begin{aligned}
& \varphi \in C\left(\left[0, T^*\right] ; H^3\right), \quad  \phi \in C\left(\left[0, T^*\right] ; H^3\right), \\
& u \in C\left(\left[0, T^*\right] ; H^{s^{\prime}}\right) \cap L^{\infty}\left(\left[0, T^*\right] ; H^3\right), \quad s^{\prime} \in[2,3), \\
& \varphi \nabla^4 u \in L^2\left(\left[0, T^*\right] ; L^2\right), \quad  u_t \in C\left(\left[0, T^*\right] ; H^1\right) \cap L^2\left(\left[0, T^*\right] ; D^2\right),\\
& \Phi\in  C\left(\left[0, T^*\right] ; H^3\right)\cap L^\infty\left(\left[0, T^*\right] ; H^4\right).
\end{aligned}
\end{equation}
 Moreover, $(\varphi, W, \Phi)$ also satisfies the a priori estimates $\eqref{20}$.
\end{lemma}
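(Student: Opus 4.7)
The plan is to build the $\eta=0$ solution as a weak limit of the $\eta>0$ solutions supplied by \autoref{21}, and then check that the limit inherits both the uniform bounds from Section \ref{estimates} and the continuity in time asserted in \eqref{48}.

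First, for each $\eta\in(0,1]$ apply \autoref{21} to obtain a strong solution $(\varphi^\eta, W^\eta, \Phi^\eta)$ on some interval, and then use the uniform a priori estimates already established in \S\ref{estimates} (which depend on $c_0$ but not on $\eta$, as long as the same external data $(\tilde\varphi,\tilde\phi,v)$ are used) to extend each of them to the common interval $[0,T^*]$ and to produce the bounds \eqref{20} uniformly in $\eta$. In particular one has $(\varphi^\eta,\phi^\eta,u^\eta)$ uniformly bounded in $L^\infty(0,T^*;H^3)$, $\varphi^\eta\D^4 u^\eta$ uniformly bounded in $L^2(0,T^*;L^2)$, $(\varphi^\eta_t,\phi^\eta_t)$ uniformly bounded in $L^\infty(0,T^*;H^2)$, $u^\eta_t$ uniformly bounded in $L^\infty(0,T^*;H^1)\cap L^2(0,T^*;D^2)$, and $\Phi^\eta$ uniformly in $L^\infty(0,T^*;H^4)$ via the Poisson equation.

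Next, extract a subsequence (not relabeled) converging weak-$\ast$ in the natural spaces. The uniform bounds on time derivatives together with \autoref{22} (Aubin--Lions) yield the strong convergences $\varphi^\eta\to\varphi$ in $C([0,T^*];H^{s'})$ and $u^\eta\to u$, $\phi^\eta\to\phi$ in $C([0,T^*];H^{s'})$ for any $s'\in[2,3)$. This is exactly what is needed to pass to the limit in the nonlinear terms: the transport/convection terms and the source terms pass directly, while the degenerate dissipation $(\varphi^\eta)^2+\eta^2)\mathbb{L}(W^\eta)$ converges to $\varphi^2\mathbb{L}(W)$ in the sense of distributions because $\varphi^\eta$ converges strongly in $H^{s'}$, $\eta^2\to 0$, and $\mathbb{L}(W^\eta)\rightharpoonup \mathbb{L}(W)$ weakly. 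The bound $\varphi^\eta\D^4 u^\eta\in L^2$ being uniform gives $\varphi\D^4 u\in L^2(0,T^*;L^2)$ by weak lower semicontinuity. The Poisson equation $\Delta\Phi=\varphi^{2/(\delta-1)}-b$ passes to the limit by standard elliptic theory, and the time continuity $\Phi\in C([0,T^*];H^3)$ follows as in the proof of \autoref{21} from the $H^1$-bound on $(\varphi^{2/(\delta-1)})_t$ computed via the transport equation.

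For the time continuity of the limit, weak-$\ast$ bounds immediately give $\varphi,\phi,u\in C_w([0,T^*];H^3)$ and $u\in C([0,T^*];H^{s'})$ for $s'\in[2,3)$ via Aubin--Lions and interpolation. To upgrade $\varphi$ and $\phi$ to strong $C([0,T^*];H^3)$ continuity, compute $\|\varphi(t)\|_3^2$ and $\|\phi(t)\|_3^2$ from the transport equations to show $\|\varphi(\cdot)\|_3,\|\phi(\cdot)\|_3$ are continuous functions of time, then combine with weak continuity through \autoref{104}. For $u_t$, differentiate $\eqref{linear}_3$ in time and run the standard argument using \autoref{221} to identify a vanishing sequence of initial times, then integrate to obtain $u_t\in C([0,T^*];H^1)\cap L^2(0,T^*;D^2)$. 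Verifying the initial condition amounts to passing to the limit at $t=0$ using strong continuity. Uniqueness of the $\eta=0$ solution follows by a direct $L^2$ energy estimate on the difference of two solutions with identical data: the transport equation gives an $L^2$ bound on $\varphi_1-\varphi_2$, the Poisson equation transfers this to $\Phi_1-\Phi_2$, and the linear hyperbolic--degenerate elliptic block for $W_1-W_2$ closes via Gronwall because the ``damping'' coefficient $\varphi^2$ is nonnegative.

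The main obstacle is the passage to the limit in the degenerate term together with the identification of the strong time continuity at $t=0$ in the highest norms. Because $\varphi$ may vanish on an open set, the $H^3$-continuity of $u$ cannot be recovered by parabolic smoothing; it must be replaced by $C([0,T^*];H^{s'})$ for $s'<3$, obtained by compactness on the hyperbolic part and interpolation against the $L^\infty H^3$ bound, with \autoref{104} used at $t=0$ to match the initial datum.
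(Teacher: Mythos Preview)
Your proposal is essentially the same three-step scheme the paper uses (existence by compactness on the $\eta$-approximants from \autoref{21} with the uniform bounds \eqref{20}, uniqueness by an $L^2$ energy estimate on the difference, and time continuity via weak continuity plus \autoref{104} for $\varphi,\phi$ and interpolation for $u$), so the overall strategy is correct.

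One point deserves correction: your handling of $u_t\in C([0,T^*];H^1)$ is not the right mechanism. Invoking \autoref{221} and differentiating $\eqref{linear}_3$ in time is what the paper does much later, in \S\ref{section 3.6}, to obtain the weighted estimate $t^{1/2}u_t\in L^\infty H^2$ for classical regularity in positive time; it does not by itself yield continuity in $H^1$ down to $t=0$. For the present lemma the paper simply reads $u_t$ off the equation,
\[
u_t=-v\cdot\nabla u-\tfrac{2A\gamma}{\gamma-1}\tilde\phi\nabla\phi-\varphi^2 L(u)+\tfrac{\delta}{\delta-1}Q(v)\cdot\nabla\varphi^2+\nabla\Phi,
\]
and checks each term lies in $C([0,T^*];H^1)$. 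The only nontrivial term is $\varphi^2 L(u)$, for which the paper uses $\varphi^2 L(u)\in L^2(0,T^*;H^2)$ together with $(\varphi^2 L(u))_t\in L^2(0,T^*;L^2)$ and the Lions--Magenes embedding to conclude $\varphi^2 L(u)\in C([0,T^*];H^1)$. Replacing your \autoref{221} argument by this direct computation fixes the gap; everything else in your outline matches the paper.
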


\begin{proof}
We prove the existence, uniqueness and time-continuity in three steps.

\textbf{Step 1}. Existence.  Due to Lemma \ref{21} and the uniform estimates \eqref{20}, for every $\eta>0$, there exists a unique strong solution $\left(\varphi^\eta, W^\eta, \Phi^\eta \right)$ in $\left[0, T^*\right] \times \mathbb{T}^3$ to the linearized problem \eqref{linear} satisfying estimates \eqref{20}, where the time $T^*>0$ is  independent of $\eta$.

By virtue of the uniform estimates \eqref{20} independent of $\eta$ and  Lemma \ref{22}, we know that for any $R>0$, there exists a subsequence of solutions (still denoted by) $\left(\rho^\eta, \varphi^\eta,  W^\eta\right)$, which converges to a limit $( \varphi,   W)=( \varphi, \phi, u)$ in the following strong sense:
\begin{equation}\label{24}
\left( \varphi^\eta,  W^\eta\right) \rightarrow(\varphi,W) \quad \text { in } ~ C\left(\left[0, T^*\right] ; H^2\left(B_R\right)\right), \quad \text { as } ~ \eta \rightarrow 0 .
\end{equation}

Again by virtue of the uniform estimates \eqref{20} independent of $\eta$, we also know that there exists a subsequence of solutions (still denoted by) $\left(\varphi^\eta,  W^\eta, \Phi^\eta \right)$, which converges to $( \varphi,W,\Phi^\eta)$ in the following weak or weak--$\ast$ sense:
\begin{equation}\label{25}
\begin{aligned}
\left( \varphi^\eta,  W^\eta\right)\rightharpoonup(\rho, \varphi,W) & \text { weakly--$\ast$ in } L^{\infty}\left(\left[0, T^*\right] ; H^3\left(\mathbb{T}^3\right)\right), \\
\left( \varphi_t^\eta,  \phi_t^\eta\right)\rightharpoonup\left(\rho_t, \varphi_t,\phi_t \right) & \text { weakly--$\ast$ in } L^{\infty}\left(\left[0, T^*\right] ; H^2\left(\mathbb{T}^3\right)\right), \\
\Phi^\eta \rightharpoonup\Phi & \text { weakly--$\ast$ in } L^{\infty}\left(\left[0, T^*\right] ; H^4\left(\mathbb{T}^3\right)\right), \\
u_t^\eta \rightharpoonup u_t & \text { weakly--$\ast$ in } L^{\infty}\left(\left[0, T^*\right] ; H^1\left(\mathbb{T}^3\right)\right), \\
u_t^\eta \rightharpoonup u_t & \text { weakly in } L^2\left(\left[0, T^*\right] ; D^2\left(\mathbb{T}^3\right)\right),
\end{aligned}
\end{equation}
which, along with the lower semi-continuity of weak convergence, implies that $(\varphi, W,\Phi)$ also satisfies the corresponding estimates \eqref{20} except those of $\varphi \nabla^4 u$.

Combining the strong convergence in \eqref{24} and the weak convergence in \eqref{25}, we easily obtain that $(\rho,\varphi, W)$ also satisfies the local estimates \eqref{20} and
\begin{equation}\label{27}
\begin{aligned}
&\varphi^\eta \nabla^4 u^\eta\rightharpoonup\varphi \nabla^4 u \quad \text{weakly in}\quad  L^2\left(\left[0, T^*\right] \times \mathbb{T}^3\right).
\end{aligned}
\end{equation}

Now we want to show that $(\rho,\varphi, W)$ is a weak solution in the sense of distributions to the linearized problem \eqref{19}. Multiplying $\eqref{19}_2$ by test function $f(t, x)=$ $\left(f^1, f^2, f^3\right) \in C_c^{\infty}\left(\left[0, T^*\right) \times \mathbb{T}^3\right)$ on both sides, and integrating over $\left[0, T^*\right] \times \mathbb{T}^3$, we have
\begin{equation}\label{28}
\begin{aligned}
& \int_0^t \int_{\mathbb{T}^3} u^\eta \cdot f_t \mathrm{d} x \mathrm{d} s-\int_0^t \int_{\mathbb{T}^3}(v \cdot \nabla) u^\eta \cdot f \mathrm{d} x \mathrm{d} s-\int_0^t \int_{\mathbb{T}^3} \frac{2 A \gamma}{\gamma-1} \tilde{\phi} \nabla \phi^\eta f \mathrm{d} x \mathrm{d} s \\
=&-\int u_0 \cdot f(0, x)+\int_0^t \int_{\mathbb{T}^3} \Big((\varphi^\eta)^2+\eta^2\Big) L(u^\eta) f \mathrm{d} x \mathrm{d} s \\
&-\frac{\delta}{\delta-1}\int_0^t \int_{\mathbb{T}^3} Q(v) \cdot \nabla(\varphi^\eta)^2 f \mathrm{d} x \mathrm{d} s-a_1\int_0^t \int_{\mathbb{T}^3} \nabla \Phi^\eta f \mathrm{d} x \mathrm{d} s.
\end{aligned}
\end{equation}

Combining the strong convergence in \eqref{24} and the weak convergences in \eqref{25}--\eqref{27},  and letting $\eta \rightarrow 0$ in \eqref{28}, we have
\begin{equation}
\begin{aligned}
& \int_0^t \int_{\mathbb{T}^3} u \cdot f_t \mathrm{d} x \mathrm{d} s-\int_0^t \int_{\mathbb{T}^3}(v \cdot \nabla) u \cdot f \mathrm{d} x \mathrm{d} s-\frac{2 A \gamma}{\gamma-1} \int_0^t \int_{\mathbb{T}^3} \tilde{\phi} \nabla \phi f \mathrm{d} x \mathrm{d} s \\
=&-\int u_0 \cdot f(0, x)+\int_0^t \int_{\mathbb{T}^3} \varphi^2 L(u) f \mathrm{d} x \mathrm{d} s\\
&-\frac{\delta}{\delta-1}\int_0^t \int_{\mathbb{T}^3} Q(v) \cdot \nabla \varphi^2 f \mathrm{d} x \mathrm{d} s-a_1\int_0^t \int_{\mathbb{T}^3} \nabla \Phi f \mathrm{d} x \mathrm{d} s.
\end{aligned}
\end{equation}
Thus it is obvious that $(\varphi, W,\Phi)$ is a weak solution in the sense of distributions to the linearized problem \eqref{19}, satisfying the regularities
\begin{equation}\label{49}
\begin{aligned}
&(\varphi,\phi,\Phi) \in C\left(\left[0, T^*\right] ; H^3\right), \quad (\varphi_t, \phi_t)\in C\left(\left[0, T^*\right] ; H^2\right)  \\
& u \in C\left(\left[0, T^*\right] ; H^{s^{\prime}}\right) \cap L^{\infty}\left(\left[0, T^*\right] ; H^3\right), \quad s^{\prime} \in[2,3), \\
& \varphi \nabla^4 u \in L^2\left(\left[0, T^*\right] ; L^2\right), \quad  u_t \in C\left(\left[0, T^*\right] ; H^1\right) \cap L^2\left(\left[0, T^*\right] ; D^2\right).
\end{aligned}
\end{equation}

\textbf{Step 2}. Uniqueness. Let $\left(\varphi_1, W_1,\Phi_1 \right)$ and $\left(\varphi_2, W_2,\Phi_2 \right)$ be two solutions obtained in the above step. We denote
\begin{equation*}
 \bar{\varphi}=\varphi_1-\varphi_2,  \quad \bar{W}=W_1-W_2, \quad \bar{\Phi}=\Phi_1-\Phi_2.
\end{equation*}
Then from $\eqref{19}_1$ and $\eqref{19}_3$, we have
\begin{equation*}
  \bar{\varphi}_t+v \cdot \nabla \bar{\varphi}=0, \quad \Delta\bar{\Phi}=0,
\end{equation*}
which implies that $ \bar{\varphi}=0, \bar{\Phi}=0 $.
Let $\bar{W}=(\bar{\phi}, \bar{u})^{\top}$, from $\eqref{19}_3$ and $\varphi_1=\varphi_2$, we have
\begin{equation}\label{42}
A_0 \bar{W}_t+ A_1(V)  \bar{W}_x=- \varphi_1^2 \mathbb{L}(\bar{W}) .
\end{equation}
Then multiplying \eqref{42} by $\bar{W}$ on both sides, and integrating over $\mathbb{T}^3$, we have
\begin{equation}
\begin{aligned}
&\frac{1}{2}  \frac{\mathrm{d}}{\mathrm{d} t} \int \bar{W}^{\top} A_0 \bar{W}+a_1\alpha \left|\varphi_1  \nabla\bar{u}\right|_2^2 \\
 \leq~ & C| \nabla V|_{\infty}|\bar{W}|_2^2+\left|\nabla \varphi_1\right|_{\infty}|\bar{u}|_2\left|\varphi_1 \nabla \bar{u}\right|_2  \\
 \leq~ & \frac{a_1 \alpha}{10}\left|\varphi_1  \nabla\bar{u}\right|_2^2+Cc_2^2|\bar{W}|_2^2,
\end{aligned}
\end{equation}
which yields that
\begin{equation}
  \frac{\mathrm{d}}{\mathrm{d} t} | \bar{W}|_2^2+\left|\varphi_1 \nabla\bar{u}\right|_2^2\leq Cc_2^2|\bar{W}|_2^2.
\end{equation}
From Gronwall's inequality, we obtain that $\bar{W}=0$, which gives the uniqueness.

\textbf{Step 3}. Time-continuity. First for $\varphi$, via the regularities shown in \eqref{49} and the classical Sobolev embedding theorem, we have
\begin{equation}\label{43}
\varphi \in C\left(\left[0, T^*\right] ; H^2\right) \cap C\left(\left[0, T^*\right] ; \text { weak }-H^3\right) .
\end{equation}
Using the same arguments as in Lemma \ref{pr1}, we have
\begin{equation*}
\|\varphi(t)\|_3^2 \leq\left(\left\|\varphi_0\right\|_3^2+C \int_0^t\left(\|\tilde{\varphi}\|_3^2\|v\|_3^2+\left|\tilde{\varphi}  \nabla^4 v\right|_2^2\right) \mathrm{d} s\right) \exp \left(C \int_0^t\|v\|_3 \mathrm{~d} s\right),
\end{equation*}
which implies that
\begin{equation*}
\limsup_{t \rightarrow 0}\|\varphi(t)\|_3 \leq\left\|\varphi_0\right\|_3 .
\end{equation*}
Then according to Lemma \ref{104} and \eqref{43}, we know that $\varphi$ is right continuous at $t=0$ in $H^3$ space. From the reversibility on the time to equation $\eqref{19}_1$, we know
\begin{equation}\label{44}
\varphi \in C\left(\left[0, T^*\right] ; H^3\right).
\end{equation}
For $\varphi_t$, from
\begin{equation*}
\varphi_t=-v \cdot \nabla \varphi-\frac{\delta-1}{2} \tilde{\varphi}  \text{div}v,
\end{equation*}
we only need to consider the term $\tilde{\varphi} \text{div} v$. Due to
\begin{equation*}
\tilde{\varphi}  \text{div}v \in L^2\left(\left[0, T^*\right] ; H^3\right), \quad(\tilde{\varphi}  \text{div}v)_t \in L^2\left(\left[0, T^*\right] ; H^1\right),
\end{equation*}
and the Sobolev embedding theorem, we have
\begin{equation*}
\tilde{\varphi}  \text{div}v \in C\left(\left[0, T^*\right] ; H^2\right),
\end{equation*}
which implies that
\begin{equation*}
\varphi_t \in C\left(\left[0, T^*\right] ; H^2\right).
\end{equation*}
The similar arguments can be used to deal with the regularities of $ \phi$ and we can get that
\begin{equation}\label{200}
 \phi\in C\left(\left[0, T^*\right] ; H^3\right), \quad  \phi_t \in C\left(\left[0, T^*\right] ; H^2\right).
\end{equation}
Using the similar methods as in the proof of Lemma \ref{21}, we can get that $\Phi\in C\left(\left[0, T^*\right] ; H^3\right)$.

For velocity $u$, from the regularity shown in \eqref{49} and Sobolev's embedding theorem, we obtain that
\begin{equation}\label{45}
u \in C\left(\left[0, T^*\right] ; H^1\right) \cap C\left(\left[0, T^*\right] ; \text {weak}-H^2\right) .
\end{equation}
Then from Lemma \ref{Lem:2.4}, for any $s^{\prime} \in[2,3)$, we have
\begin{equation*}
\|u\|_{s^{\prime}} \leq C|u|_2^{1-\frac{s^{\prime}}{2}}\|u\|_2^{\frac{s^{\prime}}{2}} \text {. }
\end{equation*}
Together with the upper bound shown in \eqref{20} and the time continuity \eqref{45}, we have
\begin{equation}\label{46}
u \in C(\left[0, T^*\right] ; H^{s^{\prime}}) .
\end{equation}
Finally, we consider $u_t$. From equations $\eqref{19}_3$ we have
\begin{equation*}
u_t=-v \cdot \nabla u-\frac{2A  \gamma}{\gamma-1} \tilde{\phi}\nabla \phi-\alpha \varphi^2 L (u)+\frac{\delta}{\delta-1}Q(v)\cdot \nabla\varphi^2+\nabla \Phi.
\end{equation*}
From \eqref{49}, we have
\begin{equation*}
\begin{aligned}
 \varphi^2 L( u) \in L^2\left(\left[0, T^*\right] ; H^2\right), \quad \left(\varphi^2 L( u)\right)_t \in L^2\left(\left[0, T^*\right] ; L^2\right),
 \end{aligned}
\end{equation*}
which means that
\begin{equation}\label{47}
\varphi^2 L( u) \in C\left(\left[0, T^*\right] ; H^1\right).
\end{equation}

Combining \eqref{48},\eqref{44}--\eqref{200},\eqref{46} and \eqref{47}, we deduce that
\begin{equation}
u_t \in C\left(\left[0, T^*\right] ; H^1\right) .
\end{equation}  

Hence we complete the proof.
\end{proof}

\subsection{Proof of Theorem \ref{29}}
Our proof is based on the classical iteration scheme and the existence results for the linearized problem obtained in Section \ref{sectionlim}. Like in Section \ref{estimates}, we define constants $c_0$ and $c_i(i=1,2,3)$, and assume that
$$
1+\left\|\varphi_0\right\|_3+\left\|W_0\right\|_3 \leq c_0.
$$

Let $\left(\varphi^0, W^0=\left(\phi^0, u^0\right)\right)$, with the regularities
$$
\begin{aligned}
& \varphi^0 \in C\left(\left[0, T^{*}\right] ; H^3\right),  \quad \phi^0 \in C\left(\left[0, T^{*}\right] ; H^3\right), \quad \varphi^0 \nabla^4 u^0 \in L^2\left(\left[0, T^{*}\right] ; L^2\right),\\
& u^0 \in C(\left[0, T^{*}\right] ; H^{s^{\prime}}) \cap L^{\infty}\left(\left[0, T^{*}\right] ; H^3\right) \text { for any } s^{\prime} \in[2,3),
\end{aligned}
$$
be the solution to the problem
\begin{equation}
\left\{\begin{array}{l}
X_t+u_0 \cdot \nabla X=0 \quad \text { in }(0,+\infty) \times \mathbb{T}^3, \\
Y_t+u_0 \cdot \nabla Y=0 \quad \text { in }(0,+\infty) \times \mathbb{T}^3, \\
Z_t-X^2 \triangle Z=0 \quad \text { in }(0,+\infty) \times \mathbb{T}^3, \\
\left.(X, Y, Z)\right|_{t=0}=\left(\varphi_0, \phi_0, u_0\right) \quad \text { in } \mathbb{T}^3. 
\end{array}\right.
\end{equation}

We take a time $T^{**} \in\left(0, T^{*}\right]$ small enough such that
\begin{align}
\sup\limits_{0 \leq t \leq T^{**}}\left(|| \varphi^0(t)\left\|_1^2+|| \phi^0(t)\right\|_1^2+|| u^0(t) \|_1^2\right)+\int_0^{T^{**}} \left|\varphi^0 \nabla^2 u^0\right| \mathrm{d} t\leq~& c_1^2, \nonumber\\
\sup\limits_{0 \leq t \leq T^{**}}\left(\left|\varphi^0(t)\right|_{D^2}^2+\left|\phi^0(t)\right|_{D^2}^2+\left|u^0(t)\right|_{D^2}^2\right)+\int_0^{T^{* *}} \left|\varphi^0 \nabla^3 u^0\right| \mathrm{d} t\leq~& c_2^2, \\
\sup\limits_{0 \leq t \leq T^{** }}\left(\left|\varphi^0(t)\right|_{D^3}^2+\left|\phi^0(t)\right|_{D^3}^2+\left|u^0(t)\right|_{D^3}^2\right)+\int_0^{T^{**}} \left|\varphi^0 \nabla^4 u^0\right| \mathrm{d} t\leq~& c_3^2.\nonumber
\end{align}
\begin{proof}
 We prove the existence, uniqueness and time-continuity in three steps. 

\textbf{Step 1}. Existence. Let $(\tilde{\varphi}, \tilde{\phi},v)=\left(\varphi^0, \phi^0, u^0\right)$, we define $\left(\rho^1, \varphi^1, W^1,\Phi^1 \right)$ as a strong solution to problem \eqref{19}. Then we construct approximate solutions
\begin{equation*}
\left( \varphi^{k+1}, W^{k+1},\Phi^{k+1}\right)=\left( \varphi^{k+1},\phi^{k+1}, u^{k+1},\Phi^{k+1}\right)
\end{equation*}
inductively, by assuming that $\left( \varphi^k, W^k,\Phi^k \right)$ was defined for $k \geq 1$, let $\left(\varphi^{k+1}, W^{k+1},\Phi^{k+1}\right)$ be the unique solution to problem \eqref{19} with  $( \tilde{\varphi}, \tilde{\phi},v)$  replaced by $\left( \varphi^k, W^k\right)$ as follows:
\begin{equation}\label{31}
\begin{cases}
&\varphi^{k+1}_t+u^k\cdot \nabla \varphi^{k+1}+\frac{\delta-1}{2} \varphi^k  \text{div}u^k=0, \\
&A_0 W^{k+1}_t+ \sum_{j=1}^{3}A_j(W^k)\partial_j W^{k+1}+(\varphi^{k+1})^2 \mathbb{L}(W^{k+1})=\mathbb{Q}(W^k)\cdot H(\varphi^{k+1})+ a_1\left(\begin{matrix}
0\\
\D  \Phi^{k+1}
\end{matrix}\right)   , \\
&\Delta \Phi^{k+1}=(\varphi^{k+1})^{\frac{2}{\delta-1}}-b,\\
&( \varphi^{k+1}, W^{k+1},\Phi^{k+1})|_{t=0}=\left(\varphi_0(x), W_0(x),\Phi_0 \right), \quad x \in \mathbb{T}^3.
\end{cases}
\end{equation}

It follows from Lemma \ref{32} that the sequence $\left(\varphi^k, W^k, \Phi^k \right)$ satisfies the uniform a priori estimates \eqref{20} for $0 \leq t \leq T^{* *}$. Then, from \eqref{31}, we can obtain that
\begin{equation}\label{33}
\left\{\begin{aligned}
&\bar{\varphi}_t^{k+1}+u^k \cdot \nabla \bar{\varphi}^{k+1}+\bar{u}^k \cdot \nabla \varphi^k+\frac{\delta-1}{2}(\bar{\varphi}^k \operatorname{div} u^{k-1}+\varphi^k \operatorname{div} \bar{u}^k)=0, \\
&A_0 \bar{W}_t^{k+1}+\sum_{j=1}^3 A_j(W^k) \partial_j \bar{W}^{k+1}+(\varphi^{k+1})^2 \mathbb{L}(\bar{W}^{k+1}) \\
=&\sum_{j=1}^3 A_j(\bar{W}^k) \partial_j W^k- \bar{\varphi}^{k+1}(\varphi^{k+1}+\varphi^k) \mathbb{L}(W^k)+\mathbb{Q}(W^k)\cdot(\mathbb{H}(\varphi^{k+1})-\mathbb{H}(\varphi^k))\\
&+\mathbb{Q}(\bar{W}^k)\cdot \mathbb{H}(\varphi^{k})+ a_1\left(\begin{matrix}
0\\
\D  \bar{\Phi}^{k+1}
\end{matrix}\right),\\
&\triangle \bar{\Phi}^{k+1}=(\varphi^{k+1})^{\frac{2}{\delta-1}}-(\varphi^k)^{\frac{2}{\delta-1}}.
\end{aligned}\right.
\end{equation}

First, we consider $|\bar{\varphi}^{k+1}|_2$. Multiplying $\eqref{33}_1$ by $2 \bar{\varphi}^{k+1}$ and integrating over $\mathbb{T}^3$, one has
$$
\begin{aligned}
\frac{\mathrm{d}}{\mathrm{d}t}|\bar{\varphi}^{k+1}|_2^2= & -2 \int(u^k \cdot \nabla \bar{\varphi}^{k+1}+\bar{u}^k \cdot \nabla \varphi^k  +\frac{\delta_1-1}{2}\left(\bar{\varphi}^k \operatorname{div} u^{k-1}+\varphi^k \operatorname{div} \bar{u}^k\right) \bar{\varphi}^{k+1} \\
\leq & C|\nabla u^k|_{\infty}|\bar{\varphi}^{k+1}|_2^2+C|\bar{\varphi}^{k+1}|_2(|\bar{u}^k|_2|\nabla \varphi^k|_{\infty}  +|\bar{\varphi}^k|_2|\nabla u^{k-1}|_{\infty}+|\varphi^k \operatorname{div} \bar{u}^k|_2),
\end{aligned}
$$
which means that 
\begin{equation}\label{37}
\frac{\mathrm{d}}{\mathrm{d}t}|\bar{\varphi}^{k+1}(t)|_2^2 \leq C_\nu|\bar{\varphi}^{k+1}(t)|_2^2+\nu\left(|\bar{u}^k(t)|_2^2+|\bar{\varphi}^k(t)|_2^2+|\varphi^k \operatorname{div} \bar{u}^k(t)|_2^2\right)
\end{equation}
with $C_\nu=C\left(1+\nu^{-1}\right)$ and   $0<\nu \leq \frac{1}{10}$  is a constant.

Furthermore, from $\eqref{33}_3$ and Lagrange’s mean value theorem, we can easily deduce that
\begin{equation}\label{202}
    \triangle \bar{\Phi}^{k+1}=\frac{2}{\delta-1} (\theta^{k+1})^{\frac{3-\delta}{\delta-1}} \bar{\varphi}^{k+1}\leq C \bar{\varphi}^{k+1},
\end{equation}
due to $\delta\in (1,2)\cup \{3 \}$, where $\theta^{k+1}$ is between $\varphi^{k+1}$ and $\varphi^{k}.$ 

It follows from \eqref{202} and \eqref{37} that 
\begin{equation}\label{201}
\begin{aligned}
& \frac{\mathrm{d}}{\mathrm{d} t} \int_{\mathbb{T}^3}\left|\Delta \bar{\Phi}^{k+1}\right|^2 \mathrm{~d} x\leq C \frac{\mathrm{d}}{\mathrm{d} t} \left|\bar{\varphi}^{k+1}\right|_2^2  \leq C_\nu|\bar{\varphi}^{k+1}(t)|_2^2+\nu\left(|\bar{u}^k(t)|_2^2+|\bar{\varphi}^k(t)|_2^2+|\varphi^k \operatorname{div} \bar{u}^k(t)|_2^2\right), \\
& \left\|\bar{\Phi}^{k+1}\right\|_{2} \leq C\left|\bar{\varphi}^{k+1}\right|_{2} .
\end{aligned}
\end{equation}

Next, we consider $|\bar{W}^{k+1}|_2$. Multiplying $\eqref{31}_3$ by $2 \bar{W}^{k+1}$ and integrating over $\mathbb{T}^3$, we obtain that
\begin{equation}\label{35}
\begin{aligned}
&\frac{\mathrm{d}}{\mathrm{d}t}  \int(\bar{W}^{k+1})^{\top} A_0 \bar{W}^{k+1}+2 a_1  \alpha|\varphi^{k+1} \nabla \bar{u}^{k+1}|_2^2+2a_1 (\alpha+\beta)  |\varphi^{k+1} \operatorname{div} \bar{u}^{k+1}|_2^2 \\
\leq&  \int(\bar{W}^{k+1})^{\top} \operatorname{div} A(W^k) \bar{W}^{k+1}+\int \sum_{j=1}^3(\bar{W}^{k+1})^{\top} A_j(\bar{W}^k) \partial_j W^k \\
& -2 a_1 \frac{\delta}{\delta-1}  \int \nabla(\varphi^{k+1})^2 \cdot Q(\bar{u}^{k+1}) \cdot \bar{u}^{k+1}  -2 a_1  \int(\bar{\varphi}^{k+1}(\varphi^{k+1}+\varphi^k) \cdot L(u^k))\cdot \bar{u}^{k+1} \\
& -2 a_1 \frac{\delta-1}{\delta} \int \nabla(\bar{\varphi}^{k+1}(\varphi^{k+1}+\varphi^k)) \cdot Q(u^k)\cdot \bar{u}^{k+1}\\
& +2 a_1 \frac{\delta-1}{\delta}\int \nabla(\varphi^k)^2 \cdot(  Q(u^k)-Q( u^{k-1})) \cdot \bar{u}^{k+1}+ a_1 \int \nabla \bar{\Phi}\cdot \bar{u}^{k+1}\\
=&\sum_{i=1}^{7} \mathrm{J}_i .
\end{aligned}
\end{equation}
It follows from  Lemma \ref{GN}, \eqref{13},  H\"older's inequality, Young's inequality  and $\eqref{201}_2$ that 
\begin{align}
\mathrm{J}_1=~ & \int\left(\bar{W}^{k+1}\right)^{\top} \operatorname{div} A(W^k) \bar{W}^{k+1} \leq C|\nabla W^k|_{\infty}|\bar{W}^{k+1}|_2^2 \leq C|\bar{W}^{k+1}|_2^2, \nonumber\\
\mathrm{J}_2=~ & \int \sum_{j=1}^3 A_j\left(\bar{W}^k\right) \partial_j W^k \cdot \bar{W}^{k+1} \nonumber \\
\leq~ & C|\nabla W^k|_{\infty}|\bar{W}^k|_2|\bar{W}^{k+1}|_2 \leq C \nu^{-1}|\bar{W}^{k+1}|_2^2+\nu|\bar{W}^k|_2^2, \nonumber\\
\mathrm{J}_3=~ & -2 a_1 \frac{\delta}{\delta-1}  \int \nabla(\varphi^{k+1})^2 \cdot Q(\bar{u}^{k+1}) \cdot \bar{u}^{k+1}\nonumber\\
\leq~ & C |\nabla \varphi^{k+1}|_{\infty}|\varphi^{k+1} \nabla \bar{u}^{k+1}|_2|\bar{u}^{k+1}|_2 \label{34} \\
\leq~ & C |\bar{W}^{k+1}|_2^2+\frac{a_1  \alpha}{10}|\varphi^{k+1} \nabla \bar{u}^{k+1}|_2^2, \nonumber \\
\mathrm{J}_4=~ & -2 a_1  \int(\bar{\varphi}^{k+1}(\varphi^{k+1}+\varphi^k) \cdot L(u^k))\cdot \bar{u}^{k+1}\nonumber\\
\leq~ & C|\bar{\varphi}^{k+1}|_2|\bar{u}^{k+1}|_2|\varphi^k \nabla^2 u^k|_\infty+C|\bar{\varphi}^{k+1}|_2|\varphi^{k+1}\nabla \bar{u}^{k+1}|_2 \nonumber\\
\leq~ & C|\bar{\varphi}^{k+1}|_2^2+C(1+|\varphi^{k+1}\nabla^4 u^k|_2)|\bar{u}^{k+1}|_2^2+\frac{\alpha}{10}|\varphi^{k+1}\nabla \bar{u}^{k+1}|_2^2, \nonumber\\
\mathrm{J}_5=~& -2 a_1 \frac{\delta-1}{\delta} \int \nabla(\bar{\varphi}^{k+1}(\varphi^{k+1}+\varphi^k)) \cdot Q(u^k)\cdot \bar{u}^{k+1}\nonumber\\
\leq~ & C|\nabla^2 u^k|_6|\bar{\varphi}^{k+1}|_2|\varphi^{k+1}\bar{u}^{k+1}|_3+C|\bar{\varphi}^{k+1}|_2|\varphi^k\nabla^2u^k|_\infty |\bar{u}^{k+1}|_2 \nonumber\\
&+C|\bar{\varphi}^{k+1}|_2|\nabla u^k|_\infty |\varphi^{k+1}\nabla \bar{u}^{k+1}|_2+C|\bar{\varphi}^{k+1}|_2|\varphi^{k+1}\nabla \bar{u}^{k+1}|_2|\nabla u^k|_\infty \nonumber\\
&+C |\bar{\varphi}^{k+1}|_2^2 |\nabla u^k|_\infty|\nabla \bar{u}^{k+1}|_\infty \nonumber\\
\leq~ & C |\bar{\varphi}^{k+1}|_2^2+C(1+|\varphi^k\nabla^4 u^k|_2^2)|\bar{W}^{k+1}|_2^2+\frac{a_1\alpha}{10}|\varphi^{k+1}\nabla \bar{u}^{k+1}|_2^2,\nonumber\\
\mathrm{J}_6=~&2 a_1 \frac{\delta-1}{\delta}\int \nabla(\varphi^k)^2 \cdot(  Q(u^k)-Q( u^{k-1})) \cdot \bar{u}^{k+1} \nonumber\\
\leq~ & C|\nabla \varphi^k|_\infty|\varphi^k\nabla\bar{u}^k|_2|\bar{u}^{k+1}|_2\leq C\nu^{-1}|\bar{W}^{k+1}|_2^2+\nu|\varphi^k\nabla\bar{u}^k|_2^2, \nonumber \\
\mathrm{J}_{7}=~&  a_1 \int\nabla \bar{\Phi}\cdot \bar{u}^{k+1}\leq C|\nabla \Phi|_2 |\bar{u}^{k+1}|_2\leq C |\nabla \Phi|_2^2+ C|\bar{u}^{k+1}|_2^2 \nonumber\\
\leq~ & C \left|\bar{\varphi}^{k+1}\right|_{2}^2+C|\bar{u}^{k+1}|_2^2.\nonumber
\end{align}
Then, from \eqref{35} and \eqref{34},  it yields that 
\begin{equation}\label{36}
\begin{aligned}
&\frac{\mathrm{d}}{\mathrm{d}t} \int(\bar{W}^{k+1})^{\top} A_0 \bar{W}^{k+1}+a_1  \alpha|\varphi^{k+1} \nabla \bar{u}^{k+1}|_2^2 \\
\leq~ &  C\left(\nu^{-1}+|\varphi^k \nabla^4 u^k|_2^2\right)|\bar{W}^{k+1}|_2^2+C |\bar{\varphi}^{k+1}|_2^2 +\nu\left(|\varphi^k \nabla\bar{u}^k|_2^2+|\bar{\varphi}^k|_2^2+|\bar{W}^k|_2^2\right) .
\end{aligned}
\end{equation}

We denote
\begin{equation*}
  S^{k+1}(t)=\sup\limits_{s\in[0,t]}|\bar{W}^{k+1}(s)|_2^2+\sup\limits_{s\in[0,t]}|\bar{\varphi}^{k+1}(s)|_2^2+\sup\limits_{s\in[0,t]}|\triangle \bar{\Phi}^{k+1}(s)|_2^2.
\end{equation*}

From \eqref{37}, \eqref{201} and \eqref{36}, we have
\begin{equation}
\begin{aligned}
& \frac{\mathrm{d}}{\mathrm{d}t} \int\Big((\bar{W}^{k+1})^{\top} A_0 \bar{W}^{k+1}+|\bar{\varphi}^{k+1}(s)|_2^2+|\triangle \bar{\Phi}^{k+1}(s)|_2^2\Big)+|\varphi^{k+1} \nabla \bar{u}^{k+1}|_2^2 \\
\leq~ &  E^k_\nu(|\bar{W}^{k+1}|_2^2+|\bar{\varphi}^{k+1}|_2^2 ) +\nu\left(|\varphi^k \nabla\bar{u}^k|_2^2+|\bar{\varphi}^k|_2^2+|\bar{W}^k|_2^2\right)
\end{aligned}
\end{equation}
for some $E^k_\nu$ such that $\int_{0}^{t} E^k_\nu ds\leq C+C(1+\frac{1}{\nu})t$. According to Gronwall's inequality, one has
\begin{equation}
\begin{aligned}
&S^{k+1}+\int_{0}^{t}|\varphi^{k+1}\nabla\bar{u}^{k+1}|_2^2\mathrm{d}s\\
\leq~ & C\nu \int_{0}^{t}(|\varphi^k \nabla\bar{u}^k|_2^2+|\bar{\varphi}^k|_2^2+|\bar{W}^k|_2^2)\mathrm{d}s\exp(C+C(1+\frac{1}{\nu})t)\\
\leq~ & \Big(C\nu\int_{0}^{t}(|\varphi^k \nabla\bar{u}^k|_2^2)\mathrm{d}s+Ct\nu\sup\limits_{s\in[0,t]}[|\bar{W}^k|_2^2+|\bar{\varphi}^k|_2^2]\Big)\exp(C+C(1+\frac{1}{\nu})t).
\end{aligned}
\end{equation}

We can choose $\nu_0>0$ and $T_{*}\in(0,\min(1,T^{**}))$ small enough such that 
\begin{equation*}
  C\nu_0\exp C\leq \frac{1}{8},\quad \exp(C(1+\frac{1}{\nu})T_*)\leq 2,
\end{equation*}
which yields that
\begin{equation}\label{40}
 \sum_{k=1}^{+\infty} \Big(S^{k+1}(T_*)+\int_{0}^{T_*}(|\varphi^{k+1} \nabla \bar{u}^{k+1}|_2^2\Big)\leq C< +\infty.
\end{equation}

It follows from \eqref{20} and \eqref{40} that $(\varphi^k,W^k,\Phi^k)$ converges to a limit $(\varphi,W,\Phi)$ in the following strong sense:
\begin{equation}
(\varphi^k, W^k, \Phi^k)\rightarrow (\varphi, W, \Phi)\quad \text{in} \quad L^\infty([0,T_*];H^2(\mathbb{T}^3).
\end{equation}

Due to the local estimates \eqref{20} and the lower-continuity of norm for weak or weak* convergence, we know that $(\varphi, W,\Phi)$ satisfies the estimates \eqref{20}. According to the strong convergence in \eqref{40}, we can  show that $(\varphi, W, \Phi)$ is a weak solution of \eqref{4} in the sense of distribution with the regularities:
\begin{equation}
\begin{aligned}
& \varphi \in L^{\infty}\left(\left[0, T^*\right] ; H^3\right), \varphi_t \in L^{\infty}\left(\left[0, T^*\right] ; H^2\right), \phi \in L^{\infty}\left(\left[0, T^*\right] ; H^3\right), \phi_t \in L^{\infty}\left(\left[0, T^*\right] ; H^2\right),\\
&\Phi \in L^\infty\left(\left[0, T^*\right] ; H^4\right),     u \in L^{\infty}\left(\left[0, T^*\right] ; H^3\right),  \varphi \nabla^4 u \in L^2\left(\left[0, T^*\right] ; L^2\right), \\
&u_t \in L^{\infty}\left(\left[0, T^*\right] ; H^1\right) \cap L^2\left(\left[0, T^*\right] ; D^2\right) .
\end{aligned}
\end{equation}

Thus the existence of strong solutions is proved.

\textbf{Step 2}. Uniqueness and time-continuity. It can be obtained via the same arguments used in the proof of Lemma \ref{32}. 
\end{proof}

\subsection{Proof of Theorem \ref{main theory}}\label{section 3.6}
\begin{proof}
Now we are ready to prove the Theorem \ref{main theory} and the proof is divided into two steps.

\textbf{Step 1}. Existence of regular solutions. First, for the initial assumption \eqref{203}, it follows from Theorem \ref{29} that there exists a positive time $T_*$  such that the problem \eqref{4} has a unique strong solution $(\varphi, \phi, u)$ in $\left[0, T_*\right] \times \mathbb{T}^3$ satisfying the regularities in \eqref{53}, which means that
$$
\left(\rho^{\frac{\delta-1}{2}}, \rho^{\frac{\gamma-1}{2}}\right)=(\varphi, \phi) \in C^1\left(\left(0, T_*\right) \times \mathbb{T}^3 \right), \quad \text { and } \quad(u, \nabla u) \in C\left(\left(0, T_*\right) \times \mathbb{T}^3 \right) .
$$

Noticing that $\rho=\varphi^{\frac{2}{\delta-1}}$ and $\frac{2}{\delta-1} \geq 1$, it is easy to show that
\begin{equation}\label{301}
\rho \in C^1\left(\left(0, T_*\right) \times \mathbb{T}^3 \right) .
\end{equation}

Second, the system $\eqref{4}_2$ for $W=(\phi, u)$ could be written as
\begin{equation}\label{204}
\left\{\begin{array}{l}
\phi_t+u \cdot \nabla \phi+\frac{\gamma-1}{2} \phi \operatorname{div} u=0, \\
u_t+u \cdot \nabla u+\frac{A \gamma}{\gamma-1} \nabla \phi^2+ \varphi^2 L u=\nabla \varphi^2 \cdot Q(u)+\nabla \Phi .
\end{array}\right.
\end{equation}

Multiplying $\eqref{204}_1$ by $\frac{\partial \rho}{\partial \phi}(t, x)=\frac{2}{\gamma-1} \phi^{\frac{3-\gamma}{\gamma-1}}(t, x) \in C\left(\left(0, T_*\right) \times\mathbb{T}^3 \right)$ on both sides, we get the continuity equation in $\eqref{CNSP}_1$ :
$$
\rho_t+u \cdot \nabla \rho+\rho \operatorname{div} u=0 .
$$

Multiplying $\eqref{204}_2$ by $\phi^{\frac{2}{\gamma-1}}=\rho(t, x) \in C^1\left(\left(0, T_*\right) \times \mathbb{T}^3 \right)$ on both sides, we get the momentum equations in $\eqref{CNSP}_2$ :
$$
\rho u_t+\rho u \cdot \nabla u+\nabla P=\operatorname{div}\left(\mu(\rho)\left(\nabla u+(\nabla u)^{\top}\right)+\lambda(\rho) \operatorname{div} u I_3\right)+\rho \nabla \Phi .
$$

Finally, recalling that $\rho$ can be represented by the formula
$$
\rho(t, x)=\rho_0(U(0, t, x)) \exp \left(\int_0^t \operatorname{div} u(s, U(s, t, x)) \mathrm{d} s\right),
$$
where $U \in C^1\left(\left[0, T_*\right] \times\left[0, T_*\right] \times \mathbb{T}^3\right)$ is the solution to the initial value problem
$$
\left\{\begin{array}{l}
\frac{d}{d s} U(t, s, x)=u(s, U(s, t, x)), \quad 0 \leqq s \leqq T_*, \\
U(t, t, x)=x, \quad 0 \leqq t \leqq T_*, \quad x \in \mathbb{T}^3,
\end{array}\right.
$$
it is obvious that
$$
\rho(t, x) \geq 0, \quad \forall(t, x) \in\left(0, T_*\right) \times \mathbb{T}^3 .
$$

That is to say, $(\rho, u,\Phi)$ satisfies the problem \eqref{CNSP} in the sense of distributions, and has the regularities shown in Definition \ref{220}, which means that the Cauchy problem \eqref{CNSP}-\eqref{data} has a unique regular solution $(\rho, u,\Phi)$.

\textbf{Step 2}. The smoothness of regular solutions. Now we will show that the regular solution that we obtained in the above step is indeed a classcial one in positive time $\left(0, T_*\right]$.

Due to the definition of regular solution and the classical Sobolev imbedding theorem, we immediately know that
$$
\left(\rho, \nabla \rho, \rho_t, u, \nabla u,\nabla \Phi \right) \in C\left(\left[0, T_*\right] \times \mathbb{T}^3 \right) .
$$

Now we only need to prove that
$$
\left(u_t, \text{div} \mathbb{S}\right) \in C\left(\left(0, T_*\right] \times \mathbb{T}^3\right) .
$$

Next, we first give the continuity of $u_t$. We differentiate $\eqref{204}_2$ with respect to $t$ :
\begin{equation}\label{205} 
u_{t t}+ \varphi^2 L u_t=-\left(\varphi^2\right)_t L u-(u \cdot \nabla u)_t-\frac{A \gamma}{\gamma-1} \nabla\left(\phi^2\right)_t+\left(\nabla \varphi^2 \cdot Q(u)\right)_t+\nabla \Phi_t,
\end{equation}
which, along with \eqref{53}, easily implies that
\begin{equation}\label{214}
    u_{t t} \in L^2\left(\left[0, T_*\right] ; L^2\right) .
\end{equation}
Applying $\partial_x^\zeta(|\zeta|=2)$ to \eqref{205}, multiplying the resulting equations by $\partial_x^\zeta u_t$ and integrating over
$\mathbb{T}^3$, we obtain
\begin{equation}\label{209}
\begin{aligned}
& \frac{1}{2} \frac{d}{d t}\left|\partial_x^\zeta u_t\right|_2^2+\alpha \left|\varphi \nabla \partial_x^\zeta u_t\right|_2^2+(\alpha+\beta) \left|\varphi \operatorname{div} \partial_x^\zeta u_t\right|_2^2 \\
=& \int\left(- \nabla \varphi^2 \cdot Q\left(\partial_x^\zeta u_t\right)-\left(\partial_x^\zeta\left(\varphi^2 L u_t\right)-\varphi^2 L \partial_x^\zeta u_t\right)\right) \cdot \partial_x^\zeta u_t \\
& +\int\left(- \partial_x^\zeta\left(\left(\varphi^2\right)_t L u\right)-\partial_x^\zeta(u \cdot \nabla u)_t-\frac{A \gamma}{\gamma-1} \partial_x^\zeta \nabla\left(\phi^2\right)_t\right) \cdot \partial_x^\zeta u_t \\
&+\int \partial_x^\zeta\left( \nabla \varphi^2 \cdot Q(u)\right)_t \cdot \partial_x^\zeta u_t +\int \nabla \partial_x^\zeta \Phi_t\cdot \partial_x^\zeta u_t \\
\triangleq &\sum_{i=1}^{7} P_i .
\end{aligned}
\end{equation}
It follows from the H\"older's inequality, Lemma \ref{GN} and Young's inequality that
\begin{align}
P_1=~& \int\left(-\nabla \varphi^2 \cdot Q\left(\partial_x^\zeta u_t\right)\right) \cdot \partial_x^\zeta u_t \nonumber\\
\leq~& C \left|\varphi \nabla^3 u_t\right|_2\left|\nabla^2 u_t\right|_2|\nabla \varphi|_{\infty} \leq \frac{\alpha }{20}\left|\varphi \nabla^3 u_t\right|_2^2+C \left|u_t\right|_{D^2}^2,  \nonumber\\
P_2=~&\int-\left(\partial_x^\zeta\left(\varphi^2 L u_t\right)-\varphi^2 L \partial_x^\zeta u_t\right) \cdot \partial_x^\zeta u_t  \nonumber\\
\leq~& C \left(\left|\varphi \nabla^3 u_t\right|_2|\nabla \varphi|_{\infty}+|\nabla \varphi|_{\infty}^2\left|u_t\right|_{D^2}+\left|\nabla^2 \varphi\right|_3\left|\varphi \nabla^2 u_t\right|_6\right)\left|u_t\right|_{D^2} \label{210} \\
\leq~& \frac{\alpha }{20}\left|\varphi \nabla^3 u_t\right|_2^2+C\left|u_t\right|_{D^2}^2,  \nonumber \\
P_3=~&\int- \partial_x^\zeta\left(\left(\varphi^2\right)_t L u\right) \cdot \partial_x^\zeta u_t  \nonumber \\
\leq~& C \left(\left|\nabla^2 \varphi\right|_3|L u|_6\left|\varphi_t\right|_{\infty}\left|u_t\right|_{D^2}+\left|\varphi \nabla^2 u_t\right|_6\left|\varphi_t\right|_{D^2}|L u|_3\right. \nonumber \\
& +|\nabla \varphi|_{\infty}\left|\nabla \varphi_t\right|_6|L u|_3\left|u_t\right|_{D^2}+\left|\varphi \nabla^3 u\right|_6\left|\nabla \varphi_t\right|_3\left|u_t\right|_{D^2} \nonumber \\
& \left.+|\nabla \varphi|_{\infty}\left|\varphi_t\right|_{\infty}\left|\nabla^3 u\right|_2\left|u_t\right|_{D^2}+\left|\varphi_t\right|_{\infty}\left|u_t\right|_{D^2}\left|\varphi \nabla^4 u\right|_2\right) \nonumber \\
\leq~& \frac{\alpha }{20}\left|\varphi \nabla^3 u_t\right|_2^2+C \left|u_t\right|_{D^2}^2+C \left|\varphi \nabla^4 u\right|_2^2+C ,  \nonumber\\
P_{4}=~&\int-\partial_x^\zeta(u \cdot \nabla u)_t \cdot \partial_x^\zeta u_t  \nonumber\\
\leq~& C\left(\left\|u_t\right\|_1+\left|u_t\right|_{D^2}\right)\|u\|_3-\int(u \cdot \nabla) \partial_x^\zeta u_t \cdot \partial_x^\zeta u_t  \nonumber\\
\leq~& C+C\left|u_t\right|_{D^2}^2+C|\nabla u|_{\infty}\left|\partial_x^\zeta u_t\right|_2^2 \leq C+C\left|u_t\right|_{D^2}^2,  \nonumber\\
P_{5}=~&\int-\frac{A \gamma}{\gamma-1} \partial_x^\zeta \nabla\left(\phi^2\right)_t \cdot \partial_x^\zeta u_t  \nonumber\\
\leq~& C\Big(\left|\nabla^2 \phi_t\right|_2\left|\phi \nabla^3 u_t\right|_2+\left|\phi_t\right|_{\infty}\left|\nabla^3 \phi\right|_2\left|\nabla^2 u_t\right|_2  \nonumber\\
& +C\left|\nabla^2 \phi\right|_6\left|\nabla \phi_t\right|_3\left|\nabla^2 u_t\right|_2+\left|\nabla^2 \phi_t\right|_2|\nabla \phi|_{\infty}\left|\nabla u_t\right|_2\Big)  \nonumber\\
\leq~& \frac{\alpha}{20}|\phi\nabla^3 u_t|_2^2+C(1+|\partial_x^\zeta u_t|^2), \nonumber\\
P_6=~&\int  \partial_x^\zeta\left(\nabla \varphi^2 \cdot Q(u)\right)_t \cdot \partial_x^\zeta u_t \nonumber \\
\leq~& \frac{\alpha}{20}|\varphi \nabla^3 u_t|_2^2 +C|\partial_x^\zeta u_t|_2^2+C|\varphi\nabla^4  u|_2^2+C, \nonumber\\
P_7=~&\int \nabla \partial_x^\zeta \Phi_t\cdot \partial_x^\zeta u_t \nonumber\\
\leq~& C|\partial_x^\zeta u_t|_2|\nabla^3 \Phi_t|_2 \nonumber\\
\leq~& C|\partial_x^\zeta u_t|_2^2+C.  \nonumber
\end{align}

It follows from $\eqref{209}-\eqref{210}$ that
\begin{equation}\label{211}
\frac{1}{2} \frac{d}{d t}\left|\nabla^2 u_t\right|_{2}^2+\frac{\alpha}{2}\left|\varphi \nabla^3 u_t\right|_2^2 \leq C \left|\nabla^2 
 u_t\right|_2^2+C \left|\varphi \nabla^4 u\right|_2^2+C.
\end{equation}

Then multiplying both sides of \eqref{211} with $t$ and integrating over $[\tau, t]$ for any $\tau \in(0, t)$, one gets
\begin{equation}\label{212}
t\left|u_t\right|_{D^2}^2+\int_\tau^t s\left|\varphi \nabla^3 u_t\right|_2^2 \mathrm{~d} s \leq C \tau\left|u_t(\tau)\right|_{D^2}^2+C(1+t) .
\end{equation}

According to the definition of the regular solution, we know that
$$
\nabla^2 u_t \in L^2\left(\left[0, T_*\right] ; L^2\right),
$$
which, along with Lemma \ref{221}, implies that there exists a sequence $s_k$ such that
$$
s_k \rightarrow 0, \quad \text { and } s_k\left|\nabla^2 u_t\left(s_k, \cdot\right)\right|_2^2 \rightarrow 0, \quad \text { as } k \rightarrow+\infty .
$$

Then, letting $\tau=s_k \rightarrow 0$ in \eqref{212}, we have
$$
t\left|u_t\right|_{D^2}^2+\int_0^t s\left|\varphi \nabla^3 u_t\right|_2^2 \mathrm{~d} s \leq C(1+t) \leq C,
$$

so we have
\begin{equation}\label{213}
t^{\frac{1}{2}} u_t \in L^{\infty}\left(\left[0, T_*\right] ; H^2\right) .
\end{equation}

Based on the classical Sobolev imbedding theorem
\begin{equation}\label{215}
L^{\infty}\left([0, T] ; H^1\right) \cap W^{1,2}\left([0, T] ; H^{-1}\right) \hookrightarrow C\left([0, T] ; L^q\right),
\end{equation}
for any $q \in(3,6)$, from \eqref{214} and \eqref{213}, we have
$$
t u_t \in C\left(\left[0, T_*\right] ; W^{1,4}\right),
$$
which implies that $u_t \in C\left(\left(0, T_*\right] \times \mathbb{T}^3\right)$.
Finally, we consider the continuity of div $\mathbb{S}$. Denote $\mathbb{N}= \varphi^2 L u- \nabla \varphi^2 \cdot Q(u)$. Based on \eqref{53} and \eqref{213}, we have
$$
t \mathbb{N} \in L^{\infty}\left(0, T_* ; H^2\right).
$$

From $\mathbb{N}_t \in L^2\left(0, T_* ; L^2\right)$ and \eqref{215}, we obtain $t \mathbb{N} \in C\left(\left[0, T_*\right] ; W^{1,4}\right)$, which implies that
$$\mathbb{N} \in C\left(\left(0, T_*\right] \times \mathbb{T}^3\right).$$

Since $\rho \in C\left(\left[0, T_*\right] \times \mathbb{T}^3\right)$ and $\operatorname{div} \mathbb{S}=\rho \mathbb{N}$, then we obtain the desired conclusion.
\end{proof}

\bigskip

\noindent {\bf Acknowledgments}\\
The authors sincerely appreciates Professor  Yachun Li for her helpful
suggestions and discussions on the problem solved in this paper. The research of this work was supported in part by the National Natural Science Foundation of China under grants 11831011, 12161141004, 12371221, and 11571232. This work was also partially supported by the Fundamental Research Funds for the Central Universities and Shanghai Frontiers Science Center of Modern Analysis, and by China Postdoctoral Science Foundation under grant 2021M692089.

\bigskip 
 
\noindent{\bf Data Availability Statements}\\ 
Data sharing not applicable to this article as no datasets were generated or analysed during the current study.

\bigskip

\noindent{\bf Conflict of interests}\\
The authors declare that they have no competing interests.

\bigskip

\noindent{\bf Authors' contributions}\\
The authors have made the same contribution. All authors read and approved the final manuscript.

\bigskip

\bibliographystyle{plain}
\bibliography{references}
\end{document}